\documentclass[11pt,reqno]{amsart}
\usepackage[utf8]{inputenc}
\usepackage[T1]{fontenc}
\usepackage{lmodern}
\usepackage{amsfonts,amsthm,amsmath,amssymb}
\usepackage{graphicx}
\usepackage[dvipsnames,table]{xcolor}   % »table« optons loads »colortbl« package
\usepackage{enumerate}
\usepackage{hyperref}
\usepackage{color}
\usepackage{tikz-cd}
\usepackage{subcaption}
\usepackage[margin=1in]{geometry}
\usepackage[
    maxbibnames=99,
    backend=biber,
    style=alphabetic,
    sorting=nyt,
    giveninits=true
]{biblatex}
\DeclareFieldFormat{pages}{#1}
\renewbibmacro{in:}{%
  \ifentrytype{article}
    {}
    {\bibstring{in}%
     \printunit{\intitlepunct}}}
\DeclareFieldFormat
[article,inbook,incollection,inproceedings,patent,thesis,unpublished]
  {title}{\mkbibemph{#1}}
\DeclareFieldFormat{journaltitle}{#1\isdot}
\DeclareFieldFormat[article]{volume}{\mkbibbold{#1}}
\DeclareFieldFormat[article]{number}{\bibstring{number}\addnbspace #1}

\renewbibmacro*{journal+issuetitle}{%
  \usebibmacro{journal}%
  \setunit*{\addspace}%
  \iffieldundef{series}
    {}
    {\newunit
     \printfield{series}%
     \setunit{\addspace}}%
  \printfield{volume}%
  \setunit{\addspace}%
  \usebibmacro{issue+date}%
  \setunit{\addcomma\space}%
  \printfield{number}%
  \setunit{\addcolon\space}%
  \usebibmacro{issue}%
  \setunit{\addcomma\space}%
  \printfield{eid}
  \newunit}

\newtheoremstyle{mystyle}%                % Name
  {}%                                     % Space above
  {}%                                     % Space below
  {\itshape}%                                     % Body font
  {}%                                     % Indent amount
  {\bfseries}%                            % Theorem head font
  {.}%                                    % Punctuation after theorem head
  { }%                                    % Space after theorem head, ' ', or \newline
  {\thmname{#1}\thmnumber{ #2}\thmnote{ (#3)}}%                                     % Theorem head spec (can be left empty, meaning `normal')

\theoremstyle{mystyle}
\newtheorem{Thm}{Theorem}[section]
\newtheorem{Lem}[Thm]{Lemma}
\newtheorem{Cor}[Thm]{Corollary}
\newtheorem{Prop}[Thm]{Proposition}

\newtheorem{Que}[Thm]{Question}

\theoremstyle{definition}
\newtheorem{Def}[Thm]{Definition}
\newtheorem{Ex}[Thm]{Example}

\newtheorem{Obs}[Thm]{Observation}

\theoremstyle{remark}
\newtheorem{Rmk}[Thm]{Remark}

\newcommand{\R}{\mathbb{R}}
\newcommand{\Z}{\mathbb{Z}}

\DeclareMathOperator{\Int}{int}
\newcommand{\HH}{\mathcal H}

\newcommand{\TT}{\mathcal T}
\newcommand{\del}{\partial}

\author{Sylvain Courte}
\address{Institut Fourier, Universit\'e Grenoble Alpes,
100 rue des maths, 38610 Gi\`eres, France}
\email{\href{mailto:sylvain.courte@univ-grenoble-alpes.fr}{sylvain.courte@univ-grenoble-alpes.fr}}

\author{Delphine Moussard}
\address{Institut Fourier, Universit\'e Grenoble Alpes,
100 rue des maths, 38610 Gi\`eres, France}
\email{\href{mailto:delphine.moussard@univ-grenoble-alpes.fr}{delphine.moussard@univ-grenoble-alpes.fr}}

\author{Qiuyu Ren}
\address{Department of Mathematics, University of California, Berkeley, Berkeley, CA 94720, USA}
\email{\href{mailto:qiuyu_ren@berkeley.edu}{qiuyu\_ren@berkeley.edu}}

\author{Xiaozhou Zhou}
\address{Department of Mathematical Sciences, Ritsumeikan University, 1-1-1 Nojihigashi, Kusatsu, Shiga 525-
8577, Japan}
\email{\href{ra0108ik@ed.ritsumei.ac.jp}{ra0108ik@ed.ritsumei.ac.jp}}

\title{Multisections and bridge positions in arbitrary dimensions}

\begin{document}

\begin{abstract}
Multisections were defined by Ben Aribi--Courte--Golla--Moussard as a way to decompose closed manifolds into $1$-handlebodies, which is a generalization of Heegaard splittings and trisections. Previously, their existence was only known in dimensions up to $5$. We show that multisections exist for closed manifolds in arbitrary dimensions. We also show that submanifolds of codimension at least $2$ in multisected manifolds can always be put into an appropriate bridge position, generalizing the existence result on bridge trisections in dimension $4$.
\end{abstract}

\maketitle

\section{Introduction}\label{sec:intro}
The notion of multisections of smooth manifolds was introduced in \cite{aribi2023multisections} as a generalization of Heegaard splittings \cite{heegaard1898forstudier} in dimension $3$ and of trisections in dimension $4$ \cite{gay2016trisecting}. Roughly speaking, an \textit{$(n-1)$-section} (or \textit{multisection} for short) of a closed smooth $n$-manifold $X$ is a decomposition $X=X_1\cup\cdots\cup X_{n-1}$ such that for every index set $\emptyset\ne I\subsetneq\{1,\cdots,n-1\}$, the intersection $X_I=\cap_{i\in I}X_i$ is a smooth manifold (with corners) built only out of $0,1$-handles of the appropriate dimension, and $\cap_{i=1}^{n-1}X_i$ is a closed surface. A $2$-section of a (closed, smooth) $3$-manifold is called a \textit{Heegaard splitting}, a $3$-section of a $4$-manifold is called a \textit{trisection}, and a $4$-section of a $5$-manifold is called a \textit{quadrisection}. The existence of multisections for arbitrary closed oriented smooth manifolds of dimensions $3,4,5$ is proved in \cite{heegaard1898forstudier}, \cite[Theorem~4]{gay2016trisecting}, \cite[Theorem~7.3]{aribi2023multisections}, respectively, but higher dimensional cases were previously unknown. The first goal of our paper is to prove the existence of multisections in arbitrary dimensions.

\begin{Thm}\label{thm:multisection}
Every closed smooth $n$-manifold admits an $(n-1)$-section.
\end{Thm}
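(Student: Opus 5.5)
We will build the multisection from a triangulation of $X$, following the strategy of the existing proofs in dimensions $4$ and $5$ but organized to work in all dimensions. First we fix a smooth triangulation $\TT$ of $X$ and pass to its barycentric subdivision. Each vertex of the subdivision corresponds to a simplex of $\TT$ and is thus labeled by a dimension $k\in\{0,\dots,n\}$. The dual (cubical) decomposition of $X$ consists of pieces $C_\sigma$, one per simplex $\sigma$ of $\TT$. This gives a "coloring" of $X$ by $n+1$ colors, where $C_\sigma$ has color $\dim\sigma$. Pieces of a single color are pairwise disjoint, and pieces of colors $k$ and $k+1$ meet along codimension-one faces. Our aim is to merge these $n+1$ colors into $n-1$ groups $X_1,\dots,X_{n-1}$, and then modify the groups so that every partial intersection $X_I$ is a $1$-handlebody.

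The natural first attempt is to thicken the $\le 1$-skeleton and the $\ge (n-1)$-skeleton. Concretely, put $X_1 = \nu(\TT^{(1)})\cup(\text{pieces of color }2)$. Then put $X_j = $ pieces of color $j+1$ for $2\le j\le n-2$. Finally, $X_{n-1}$ collects the top colors $n-1$ and $n$, viewed as a neighbourhood of the dual $1$-skeleton. Each single piece is a ball, and a union of disjoint balls of one color is a disjoint union of balls. The pieces in $X_1$ and $X_{n-1}$ are neighbourhoods of graphs, hence $1$-handlebodies, once the graphs are connected. Connectivity will be enforced by adding tubes along arcs, i.e.\ by drilling $1$-handles. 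In general the intersection $X_I$ is a union of lower-dimensional cells $C_\sigma\cap C_\tau$. These form a regular neighbourhood of a subcomplex whose dimension is governed by the number of colors involved. The key computation is to show, by induction on $|I|$, that after suitable boundary connected sums each $X_I$ deformation retracts onto a graph. This uses the fact that consecutive-color intersections are neighbourhoods of simplices in the link, which are cones.

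The main obstacle is that the naive color grouping does not make the intersections of non-consecutive groups $1$-dimensional. Nor does it make each $X_i$ connected, and connectedness is required for "$1$-handlebody". To handle this, we will first use the standard trick from \cite{aribi2023multisections}. We add $1$-handles, i.e.\ tubes around arcs joining components, and these tubes are carved out of the other pieces. Carving a neighbourhood of an arc out of a handlebody removes a $1$-handle's worth of boundary and preserves being a handlebody in the complementary pieces. We must then check that the combinatorics of the barycentric subdivision give the right dimension counts. The transversality statement we need is the following: for $I$ of size $k$, $X_I$ should be an $(n-k+1)$-dimensional manifold with corners built from $0$- and $1$-handles. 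The top intersection $X_{\{1,\dots,n-1\}}$ should be a closed surface.

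If the direct combinatorics fails, the fallback is inductive on dimension. We would take a multisection of a Morse-theoretic slice structure, realizing $X$ via a handle decomposition ordered by index. We then group the $k$-handles into the $k$-th piece, and use the existence of multisections of the boundary levels, which by induction are $(n-1)$-manifolds admitting $(n-2)$-sections. From there we perform the "stabilize and extend" argument that the trisection existence proof of \cite{gay2016trisecting} uses, applied once per index.
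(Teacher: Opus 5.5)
There is a genuine gap, and it sits where the content of the theorem lies. Your ``key computation'', that after suitable boundary connected sums each $X_I$ deformation retracts onto a graph, is asserted but not proved, and it fails for the decomposition you describe. (As written, your grouping also puts color $n-1$ into both $X_{n-2}$ and $X_{n-1}$. The consistent version is: $X_1=$ colors $0,1$; $X_j=$ color $j$ for $2\le j\le n-2$; $X_{n-1}=$ colors $n-1,n$.)

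Take $n=4$. Here $X_1=\nu(\TT^{(1)})$, $X_2$ is the union of the $2$-handles, and $X_3$ is the union of the $3$- and $4$-handles. Then $X_{\{1,2\}}$ is the attaching region of the $2$-handles, i.e.\ one solid torus for each of the $c\ge 2$ triangles of $\TT$. Hence $X_{\{1,2,3\}}$ is a disjoint union of $c$ tori. Also $X_{\{1,3\}}$ is the exterior of a $c$-component link in the $3$-manifold $\partial\nu(\TT^{(1)})$, which is connected if $X$ is. The boundary tori of $X_{\{1,3\}}$ span a subspace of dimension $c-1>0$ in $H_2(X_{\{1,3\}};\Z/2)$, so it is not a $1$-handlebody.

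Tubing along arcs and boundary connected sums merge components and add $1$-handles. They do not remove handles of coindex $\ge 2$ that are already present; for instance $H_2(A\natural B)\cong H_2(A)\oplus H_2(B)$. Your outline offers no other mechanism: you correctly name the obstacle, but your remedy only addresses connectivity.

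The fallback does not close the gap either. In the handle-theoretic existence proof for trisections, extending the decomposition over the $3$- and $4$-handles rests on Waldhausen's theorem that Heegaard splittings of $\#_kS^1\times S^2$ are standard. Running such an argument ``once per index'' in dimension $n$ would need comparable standardness results for multisections of the intermediate levels (e.g.\ of $\#_kS^1\times S^{n-2}$). Those are not available, and you do not supply them.

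The missing idea is a local move that deletes one coindex-$\ge 2$ handle from a sector while creating only coindex-$0,1$ handles in the other sectors of equal or larger dimension. The paper does exactly this, with no triangulation:
\begin{enumerate}
\item Start from $X_1=X$. Order the complexity lexicographically, with larger-dimensional sectors more significant.
\item Choose a $k$-handle $h$ of $X_I$ rel $\partial X_I$ with $k\le m-2$, taking first $m=\dim X_I$ minimal and then $k$ minimal. All sectors of dimension $<m$ are then $1$-handlebodies, so $\partial X_I$ is multisected.
\item The attaching sphere of $h$ has codimension $m-k\ge 2$ in $\partial X_I$, so it can be isotoped into bridge position. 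This is Theorem~\ref{thm:bridge}, proved by a parallel induction in the relative setting with corners.
\item The $1$-cone-off of the core then satisfies Lemma~\ref{lem:handle_color}.
\item Reassigning a thickening $\nu(h)$ to the complementary sectors removes $h$ from $X_I$. It adds only handles of coindex $0$ or $1$ to the other sectors of dimension $\ge m$; smaller sectors may get worse, which the ordering tolerates.
\end{enumerate}
Bridge position of attaching spheres is the ingredient your plan has no substitute for.
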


A key step in the proof of Theorem~\ref{thm:multisection} is related to the notion of \textit{bridge position}. Roughly speaking, we say that a closed submanifold $S$ in a multisection $X=X_1\cup\cdots\cup X_{n-1}$ is in \textit{bridge position} if it is transverse to every sector (and subsector), and, for every index set $\emptyset\ne I\subsetneq\{1,\cdots,n-1\}$, the submanifold $S_I:=S\cap X_I\subset X_I$ is a collection of $\partial$-parallel disks. When $X=S^3$ with the genus-$0$ Heegaard splitting and $\dim S=1$, this recovers the classical notion of bridge position for links. When $\dim X=4$, $\dim S=2$, this recovers the notion of bridge trisections for surface knots in $4$-manifolds as introduced by Meier--Zupan \cite{meier2017bridge,meier2018bridge}, whose existence was carefully established there. Existence was also proved for $X=S^5$ with the genus-$0$ quadrisection and $\dim S=3$ by Aranda--Blackwell--Kim--Naylor--Pongtanapaisan \cite{aranda2026bridge}. The second goal of our paper is to prove the existence of bridge position in arbitrary dimensions.

\begin{Thm}\label{thm:bridge}
If $X=X_1\cup\cdots\cup X_{n-1}$ is a multisection and $S\subset X$ is a closed submanifold of codimension at least $2$, then $S$ can be isotoped to be in bridge position.
\end{Thm}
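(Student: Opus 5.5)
We prove Theorem~\ref{thm:bridge} by induction on $n=\dim X$, using the structure of a multisection near its sectors and a Morse-theoretic description of each $X_I$. Let $k=\dim S\le n-2$.

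\textbf{Step 1: choose a Morse function compatible with the multisection.} Fix a decomposition of each $X_I$ as a $1$-handlebody, i.e.\ a regular neighbourhood of a graph (its spine) $\Gamma_I\subset\Int X_I$. The spines of all the $X_I$ together form a $1$-complex $\Gamma$, which has codimension at least $3$ in every sector of dimension at least $4$.

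\textbf{Step 2: general position.} By transversality, first isotope $S$ to be transverse to every stratum $X_I$ and to $\del X_I$. Since $k+1\le n-1$, we can further push $S$ off the spines $\Gamma_I$ wherever $\dim X_I-\dim \Gamma_I > \dim S_I$. Then each $S_I$ lives in the complement of the spine, which is a collar $\del X_I\times[0,1)$. The real issue is making $S_I$ consist of $\del$-parallel disks. For a top-dimensional sector ($|I|=1$), $X_i$ is a $1$-handlebody of dimension $n$, and $S_i$ has codimension $\ge 2$.

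\textbf{Step 3: inductive flattening, strata of increasing dimension.} Order the sectors by decreasing $|I|$, starting from the central surface $\Sigma=X_{\{1,\dots,n-1\}}$. There $S\cap\Sigma$ is a finite set of points, since $\dim S+2-n\le 0$; only when $k=n-2$ do we get points, and otherwise it is empty. Suppose $S_J$ is already a union of $\del$-parallel disks for all $J\supsetneq I$. Then $S\cap\del X_I$ is a closed $(k-|I|+1)$-manifold lying in $\del X_I$, which is itself a multisected-like union of the lower sectors. Choose a height function $h\colon X_I\to[0,1]$ whose sublevel set near $1$ is the collar of $\del X_I$, with critical points of index $0,1$ only on the spine. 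Restricted to $S_I$, $h$ is generically Morse. We cancel or trade critical points so that $h|_{S_I}$ has only index-$0$ critical points, which are the minima of the disks; then $S_I$ is a union of $\del$-parallel disks. This uses the standard bridge-position strategy: isotope $S$ so that critical points of low index of $h|_{S_I}$ slide down and critical points of high index slide up across $\del X_I$ into the adjacent sectors $X_{I'}$ with $I'\subset I$. There they become perturbations, and these are handled when the adjacent sectors are treated. Moving critical points across $\del X_I$ relies on the fact that $S$ has codimension $\ge 2$, so the ascending and descending manifolds of the relevant critical points miss the spines.

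\textbf{Step 4: perturbation/stabilization to finish.} The remaining difficulty is that pushing index-$j$ critical points from one sector to the next must eventually terminate. We arrange the order so that each sector $X_i$ keeps only minima and each $X_I$ keeps only minima of $S_I$. The critical points that get pushed out are then redistributed among the $n-1$ sectors cyclically by index, mirroring how a Morse function on $S$ of dimension $\le n-2$ can be split into $n-1$ index ranges. Concretely, take a Morse function on $S$ and assign the index-$j$ critical points to sector $X_{j+1}$. This is possible because $k\le n-2$ gives at most $n-1$ indices. Then realize this assignment by an isotopy using the flow of the multisection's gradient-like structure. Finally, check inductively that the traces in the lower-dimensional strata are trivial disks.

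The main obstacle is Step 3–4: proving that the triviality of $S_I$ (its being $\del$-parallel disks, not merely disks) can be achieved simultaneously for all strata. I would attack it by using the fact that, relative to a point on the spine, a codimension-$\ge2$ ball in a $1$-handlebody with unknotted boundary is $\del$-parallel; this follows from a light-bulb/general-position argument, valid because the spine is $1$-dimensional.
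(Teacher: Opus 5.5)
\textbf{There is a genuine gap.} Your Step 3 has the right target: if the collar height restricted to $S_I$ has only coindex-$0$ critical points, then $S_I$ consists of $\partial$-parallel disks. Exporting bad critical points across $\partial X_I$ is also the right instinct. But the step that carries all the difficulty is asserted rather than proved.

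\emph{Exporting is forced, and its effect is uncontrolled.} Since $S_I$ need not be a union of disks, bad critical points generally cannot be cancelled inside $X_I$ and must be moved out. What an exported index-$t$ handle contributes on the other side of the interface is governed by how its core $B$ meets the lower sectors $X_{I\cup\{j\}}$. The sectors that receive it are the $X_J$ with $J\cap I^c\neq\emptyset$; the sectors $X_{I'}$ with $I'\subsetneq I$, which you name, only change by an isotopy. If $B$ meets the lower sectors arbitrarily, the push creates positive-coindex handles in sectors at least as significant as $X_I$. So nothing makes them ``perturbations'', and nothing makes the process terminate.

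\emph{Already-fixed sectors can be spoiled.} The push surgers $\partial S_I$, so it can destroy the sectors $S_{I\cup J}$ you had already made trivial. A one-pass induction over sectors therefore does not work.

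\emph{Step 4 does not repair this.} No height function on $X$ orders the sectors linearly; already for trisections all pairwise intersections are nonempty. Placing index-$j$ critical points in $X_{j+1}$ says nothing about the triviality of the pieces $S_I$.

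\emph{The fallback lemma is false in codimension $2$.} A knotted arc in $B^3$ is a codimension-$2$ ball in a $1$-handlebody with trivially unknotted boundary, yet it is not $\partial$-parallel.

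\emph{Your induction on $n$ is never used.} You announce it, but no step invokes the statement in lower dimension.

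\textbf{The missing idea is to apply the induction on dimension to the core of the offending handle.} The paper first passes to the relative version (Theorem~\ref{thm:bridge_general}: multisections with corners, properly embedded $S$, a complexity counting positive-coindex ambient handles of each $S_I$).
\begin{enumerate}
\item Choose the lowest-dimensional sector $X_I$ containing such a handle, and the lowest index $t$ there.
\item Minimality gives that $\partial_-S_I$ is already in bridge position in $\partial_-X_I$. Hence $Y=\partial_-X_I\setminus\nu(\partial_-S_I)$ is a multisection with corners by Lemma~\ref{lem:bridge_complement}.
\item By the induction hypothesis, the core $B\subset Y$ is isotoped rel boundary into bridge position.
\item Only then is the handle pushed across the interface. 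Each $\partial_-$-parallel disk of $B\cap X_{I\cup J}$ contributes exactly one coindex-$0$ ambient handle to the neighbouring sectors, and cylindrical pieces contribute nothing.
\end{enumerate}
Termination follows from ordering the complexity lexicographically with higher-dimensional sectors more significant. It strictly decreases even though lower-dimensional sectors may get worse. This is also why your closed statement cannot serve as the inductive hypothesis: $B$ is properly embedded in a manifold with corners, with $\partial B\subset\partial\nu(\partial_-S_I)$.
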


In fact, we will also prove the existence of bridge positions in a relative setup, where $X$ may have boundary and is multisected in an appropriate sense, and where $S$ is a properly embedded submanifold with codimension at least $2$. The condition on codimension is necessary, as can already be seen in dimension $3$.

In particular, in dimensions $4$ and $5$, Theorem~\ref{thm:multisection} and Theorem~\ref{thm:bridge} yield alternative proofs of the existence of trisections \cite{gay2016trisecting}, quadrisections \cite{aribi2023multisections}, bridge trisections \cite{meier2017bridge,meier2018bridge}, and bridge quadrisections in $S^5$ \cite{aranda2026bridge}. In contrast to the proofs in the cited works, our argument is symmetric with respect to the sectors, lending it a certain aesthetic appeal.
Moreover, our proof is very general: it works in every dimension, for smooth or PL manifolds, orientable or not.

\medskip
The paper is organized as follows. In the preliminary section~\ref{sec:def_closed}, we give the definitions of multisections and bridge positions for closed manifolds and submanifolds. In Section~\ref{sec:multisection}, we prove Theorem~\ref{thm:multisection} using an inductive argument assuming Theorem~\ref{thm:bridge}. The proof of Theorem~\ref{thm:bridge} is an analogous inductive argument, presented in Section~\ref{sec:bridge}, but which requires relaxing our setup to the case of manifolds with boundary and properly embedded submanifolds, which we introduce in Section~\ref{sec:def_general}. Section~\ref{sec:bridge} ends with a proof that the handle decompositions we use also exist for PL manifolds. In Section~\ref{sec:trisection_5}, partly as an illustration of our proof method, we reprove a result by Lambert-Cole--Miller which asserts that $5$-manifolds admit trisections that agree with a given trisection on the boundary. The reader may read Section~\ref{sec:trisection_5} before Section~\ref{sec:multisection} for the flavor of the proof of Theorem~\ref{thm:multisection}. Finally, Section~\ref{sec:future} proposes some perspectives about uniqueness of multisections and bridge positions, and generalizations of multisections.

\section*{Acknowledgments}
This project was initiated during the \textit{Multisectors} workshop at CIRM in Luminy, France, July 2025. The authors thank CIRM for its hospitality and for making this collaboration possible. We are also grateful to David Gay for organizing the workshop and for many interesting discussions. This research was partially conducted during the period when QR served as a Clay Research Fellow.

\section{Definitions: Closed case}\label{sec:def_closed}
In this paper, unless otherwise stated, every manifold is smooth, possibly with boundary and corners. We refer the reader to \cite[Chapter~4 and \S~6.2]{gompf20234} for background on handle decompositions of manifolds and ambient handle decompositions of submanifolds.

\subsection{Multisection}\label{sbsec:def_multisection_closed}
A \textit{$1$-handlebody} of dimension $n$ is a manifold built out of $n$-dimensional $0$- and $1$-handles. Equivalently, it is a manifold of the form $$H=\sqcup_{i=1}^k(\natural^{\ell_i}(S^1(\tilde\times)D^{n-1})),\ k\ge0,\ \ell_i\ge0,$$ where $(\tilde\times)$ denotes either $\times$ or $\tilde\times$, affecting the orientability of the connected components of $H$. The $1$-handlebody is orientable if and only if no $S^1\tilde\times D^{n-1}$ summand is present.

\begin{Def}[Multisection, {\cite{aribi2023multisections}}]\label{def:multisection}
Let $n\ge2$ be an integer. An \textit{$(n-1)$-section} (or \textit{multisection}) of a closed $n$-manifold $X$ is a decomposition $X=X_1\cup\cdots\cup X_{n-1}$, so that if we set $X_I:=\cap_{i\in I}X_i$ for $\emptyset\ne I\subset\{1,\cdots,n-1\}$ (these are called \textit{sectors} of the multisection), the following conditions hold:
\begin{enumerate}[(1)]
\item Each $X_I\subset X$ is a submanifold with corners, whose codimension-$k$ stratum is $$\bigcup_{J\supset I,|J|-|I|=k}\Int(X_J).$$
\item For each $I\ne\{1,\cdots,n-1\}$, after smoothing the corners, $X_I$ is diffeomorphic to a $1$-handlebody of dimension $n+1-|I|$.
\end{enumerate}
\end{Def}

For later purposes, it is convenient to turn the handlebody upside down and obtain the following alternative description of the second item in Definition~\ref{def:multisection}.
\begin{enumerate}[(1')]
\setcounter{enumi}{1}
\item For each $I\ne\{1,\cdots,n-1\}$, after smoothing the corners, $X_I$ admits a handle decomposition rel $\partial X_I$ with handles of coindices $0,1$ (i.e. indices $\dim X_I$, $\dim X_I-1$).
\end{enumerate}

If $X=X_1\cup\cdots\cup X_{n-1}$ is a multisection, $\Sigma:=X_{\{1,\cdots,n-1\}}\subset X$ is a smooth closed surface, sometimes called the \textit{multisection surface} of the multisection. One can easily show that the inclusions induce bijections  $\pi_0(\Sigma)\xrightarrow{\cong}\pi_0(X)$ and $\pi_0(\Sigma)\xrightarrow{\cong}\pi_0(X_I)$ for all $I$; thus, one may restrict attention, without loss of generality, to connected manifolds and connected $1$-handlebodies, although we will not be imposing this assumption. Analogously, one can show that $X$ and all of the sectors $X_I$ have the same orientability.

For a nonempty finite set $I$, let $\Delta^I$ denote the standard $(|I|-1)$-simplex with vertices given by elements of $I$. Its faces are $\Delta^J$ for various $J\subset I$. Let $0$ denote the center of $\Delta^I$, and $\Delta^I_J\subset\Delta^I$ be the cone on $\Delta^{I\backslash J}$ with cone point $0$, for $J\subsetneq I$. Thus, $\Delta^I_\emptyset=\Delta^I$. The notation $\Delta^I_I$ corresponds to a degenerate case and will not be used.

If $X=X_1\cup\cdots\cup X_{n-1}$ is a multisection, $\emptyset\ne I\subset\{1,\cdots,n-1\}$, then $\Delta^I$ is a local transverse model for $X_I$ \cite[Lemma~2.4]{aribi2023multisections}. More precisely, a closed tubular neighborhood of $X_I$ in the codimension-$0$ submanifold $\cup_{i\in I}X_i$ is diffeomorphic to $\Delta^I\times X_I$ via a diffeomorphism that respects the decomposition into sectors: its intersection with $X_J$ is $\Delta^I_J\times X_I$, for all $J\subset I$.

\subsection{Bridge position}
Suppose $M$ is a manifold with boundary and $N\subset M$ is a properly embedded submanifold. Let $P=[0,1]\times N$ with the corner along $\{1\}\times\partial N$ smoothened, $\partial_-P=\{0\}\times N$, and $\partial_+P=\overline{\partial P\backslash\partial_-P}$. We say $N$ is \textit{$\partial$-parallel} in $M$ if there is an embedding $P\subset M$ as a submanifold such that $P\cap\partial M=\partial_-P$ and $\partial_+P=N$.

\begin{Def}[Bridge position]\label{def:bridge}
Let $X=X_1\cup\cdots\cup X_{n-1}$ be a multisection of a closed $n$-manifold. A closed submanifold $S\subset X$ is said to be in \textit{bridge position} if the following conditions hold:
\begin{enumerate}[(1)]
\item $S$ is transverse to each sector $X_I$; in particular, $S_I:=S\cap X_I$ is a properly embedded submanifold of $X_I$ with corners.
\item For each $I\ne\{1,\cdots,n-1\}$, after smoothing the corners, $S_I\subset X_I$  is $\partial$-parallel and is a disjoint union of disks of codimension $\mathrm{codim}_XS$.
\end{enumerate}
\end{Def}
It is convenient to note the following alternative description of the second condition in Definition~\ref{def:bridge} in terms of ambient handle decompositions rel boundary (see \cite[\S6.2]{gompf20234} for terminology).
\begin{enumerate}[(1')]
\setcounter{enumi}{1}
\item For each $I\ne\{1,\cdots,n-1\}$, after smoothing the corners, $S_I\subset X_I$ is contained in a collar neighborhood of $\partial X_I$, in which it admits an ambient handle decomposition rel $\partial S_I\subset\partial X_I$ with handles of coindex $0$ (i.e. index $\dim S_I$).
\end{enumerate}

\section{Multisections}\label{sec:multisection}
In this section, assuming Theorem~\ref{thm:bridge}, we prove Theorem~\ref{thm:multisection}, the existence of multisections for closed manifolds in arbitrary dimensions.

The strategy is as follows. We start with any decomposition of an $n$-manifold $X$ into $n-1$ pieces, $X=X_1\cup\cdots\cup X_{n-1}$ (e.g. $X_1=X$, $X_2=\cdots=X_{n-1}=\emptyset$), such that $X_I=\cap_{i\in I}X_i$ is a submanifold with corners of the appropriate dimension for every index set $I$; for the purpose of this section only, we call such a decomposition $X=X_1\cup\cdots\cup X_{n-1}$ a \textit{pre-multisection} of $X$. In Section~\ref{sbsec:complexity}, we define the \textit{complexity} of a pre-multisection, valued in a well-ordered set, so that the pre-multisection has complexity $0$ (the unique minimum) if and only if it is a multisection of $X$. Then, in Sections~\ref{sbsec:modification} and~\ref{sbsec:proof_decrease}, we show that it is always possible to modify a pre-multisection to decrease its complexity if the complexity is nonzero. This implies that any pre-multisection of $X$ can be modified to a multisection of $X$ in finitely many steps, proving Theorem~\ref{thm:multisection}.

\subsection{Complexity of a pre-multisection}\label{sbsec:complexity}
We first define the complexity of compact manifolds with (possibly empty) boundary. For later convenience, instead of considering absolute handle decompositions of manifolds, we will turn them upside down and consider handle decompositions that build a manifold from its boundary. Thus, a $1$-handlebody of dimension $m$ is a manifold with a rel boundary handle decomposition that has only $(m-1)$- and $m$-handles.

\begin{Def}
The \textit{complexity} of a compact $m$-manifold $M$ with (possibly empty) boundary is $$c(M)=\min\{(\#(m-2)\text{-handles},\#(m-3)\text{-handles},\cdots,\#0\text{-handles})\}\in\Z_{\ge0}^{m-1},$$ where $\Z_{\ge0}^{m-1}$ is equipped with the lexicographical order, and the minimum is taken over all handle decompositions of $M$ rel $\partial M$. The \textit{complexity} of a compact manifold with corners is the complexity of its (corner-)smoothing.
\end{Def}

\begin{Def}
The \textit{complexity} of a pre-multisection $X=X_1\cup\cdots\cup X_{n-1}$ of a closed $n$-manifold $X$ is $$c(X_1,\cdots,X_{n-1})=\left(\sum_{|I|=1}c(X_I),\sum_{|I|=2}c(X_I),\cdots,\sum_{|I|=n-2}c(X_I)\right)\in\Z_{\ge0}^{n(n-1)/2-1}.$$
\end{Def}

Theorem~\ref{thm:multisection} is implied by the following proposition.

\begin{Prop}\label{prop:modification}
Let $c_r(\cdot)$ denote the $r$-th coordinate of $c(\cdot)$. If $X=X_1\cup\cdots\cup X_{n-1}$ is a pre-multisection of $X$, and $r$ is the maximal index with $c_r(X_1,\cdots,X_{n-1})>0$, then there exists a pre-multisection $X=X_1'\cup\cdots\cup X_{n-1}'$ such that $c_\ell(X_1',\cdots,X_{n-1}')\le c_\ell(X_1,\cdots,X_{n-1})$ for $\ell\le r$, with strict inequality when $\ell=r$.
\end{Prop}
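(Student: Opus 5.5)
We first locate where the complexity sits. The coordinates of $c(X_1,\cdots,X_{n-1})$ are grouped into blocks by $|I|$, and within the block for $|I|=k$ they are ordered by handle index, from $(m-2)$-handles down to $0$-handles, where $m=n+1-k=\dim X_I$. So the maximal index $r$ with $c_r>0$ picks out a cardinality $k$ and a handle index $j\le m-2$. For every $J$ with $|J|>k$, and for every $X_I$ with $|I|=k$, no rel-boundary handle decomposition needs handles of index $<j$. Fix one $I$ with $|I|=k$ whose minimal decomposition uses a $j$-handle $h$. The plan is to change the decomposition only near $h$, by transferring $h$ out of $X_I$ into the larger sectors $X_{I\setminus\{i\}}$ (or into neighbouring pieces). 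The exchange must lower the number of $j$-handles of $X_I$ and must not increase any coordinate that comes earlier in the order, namely the sectors with smaller $|I|$ and the higher-index handle counts.

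\textbf{Step 1 (reduction to a submanifold).} Take the core of the $j$-handle together with the lower-index handles it depends on; by the maximality of $r$ there are none of index $<j$. Thicken the handle cores, built rel $\partial X_I$, into a codimension-$\ge2$ subcomplex. A $j$-handle in a manifold of dimension $m$ with $j\le m-2$ has a core of codimension $\ge2$. So the obstruction becomes a properly embedded submanifold $S$ of codimension $\ge2$ in the lower-dimensional sectors $X_{I\cup\{i\}}$: the cocore data, or equivalently the attaching region on $\partial X_I=\bigcup_{i\notin I}X_{I\cup\{i\}}$. Here we invoke Theorem~\ref{thm:bridge}, in its relative form if needed. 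The multisection structure that already exists on the deeper strata, namely all $X_J$ with $J\supsetneq I$, which have zero complexity in the relevant coordinates, lets us isotope the attaching sphere of $h$ into bridge position. Then its intersection with each deeper sector is a union of $\partial$-parallel disks.

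\textbf{Step 2 (the modification).} Once the attaching data is in bridge position, we push a neighbourhood $\nu(h)$ of the handle out of $X_I$ and add it to the adjacent sector(s) $X_{I'}$ with $I'\subsetneq I$. Concretely, the set $X_i$ with $i\in I$ that is removed from the intersection gives up $\nu(h)$, so $\nu(h)$ is no longer in $X_I$. The bridge-position condition guarantees that for every $J\supsetneq I$ the new $X_J'$ differs from $X_J$ by removing or adding neighbourhoods of $\partial$-parallel disks. Such a change only adds cancelling pairs or handles of coindex $0,1$, so the coordinates of deeper sectors do not get worse. In $X_I'$ the handle $h$ is gone, so the $j$-handle count drops. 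In $X_{I'}$, which has larger dimension, the added piece $\nu(h)$ is attached along a $\partial$-parallel region. The new handle of $X_{I'}$ therefore has index $j$ or $j+1$ relative to a dimension one higher. Its coindex is then large, so it enters only coordinates later in the lexicographic order or coordinates that are allowed to grow. This must be checked against the ordering: smaller $|I|$ comes first, so we must in fact show that the coordinates of $X_{I'}$ do \emph{not} increase. I would aim to make $\nu(h)$ a cancelling or trivial $(\dim X_{I'}-1)$- or $\dim X_{I'}$-handle in $X_{I'}$, using the $\partial$-parallel disks to exhibit a cancelling pair.

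\textbf{Main obstacle.} The delicate step is the bookkeeping in Step 2. We must verify that moving $\nu(h)$ does not increase complexity in any sector that precedes $X_I$ lexicographically, in particular the larger sectors $X_{I'}$, $|I'|<k$, whose coordinates come first. We must also verify that the corners remain of the correct local model $\Delta^I$. I expect the key point to be this: because the bridge disks are $\partial$-parallel, $\nu(h)\cap X_{I'}$ is a collar-type region. Adding it to $X_{I'}$ is then a diffeomorphism rel the rest, possibly up to coindex-$0$ and coindex-$1$ handles, which do not count. Proving this will require a careful local model near the corners, which is where most of the work would go.
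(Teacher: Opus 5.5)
Your Step~1 is sound and matches the paper. Since $r$ is maximal, every $X_J$ with $J\supsetneq I$ is a $1$-handlebody, so $\partial X_I=\bigcup_{i\notin I}X_{I\cup\{i\}}$ is a multisected closed $(m-1)$-manifold. Since $X_I$ has no handles of index $<j$, the attaching sphere $S$ of $h$ lies in $\partial X_I$ with codimension $m-j\ge2$, and Theorem~\ref{thm:bridge} puts it in bridge position.

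The gap is Step~2, which is where the actual proof lives. You hand $\nu(h)$ to the larger sectors $X_{I'}$, $I'\subsetneq I$, so you only modify the $X_i$ with $i\in I$. This cannot give a pre-multisection with $X_I'=\overline{X_I\backslash h}$. Removing $h$ exposes the new boundary $\partial D^{m-j}\times B$ (the part of $\partial h$ away from the attaching region). Its interior points were interior to $X_I$, so they lie in no $X_i$ with $i\notin I$. The stratification condition, however, requires $\partial X_I'=\bigcup_{i\notin I}X'_{I\cup\{i\}}$. Hence the complementary sectors $X_i$, $i\notin I$, must grow into $\nu(h)$: they, not the $X_{I'}$, have to be the recipients.

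Your direction is also wrong for the complexity. The $X_{I'}$ precede $X_I$ in the lexicographic order. If $X_{I'}$ absorbed a thickened copy of $h$ glued along its attaching region, that would be a $j$-handle attached from outside, i.e.\ a coindex-$j$ handle rel boundary, which is counted as soon as $j\ge2$. Your hope of making it cancel has no mechanism behind it. Conversely, the sectors $X_J$ with $J\supsetneq I$, which you take care to control, sit in coordinates after $r$ and are allowed to get worse.

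The missing idea is the paper's $1$-cone-off. Write $I^c=\{1,\dots,m-2\}$ and $S_i=S\cap X_{I\cup\{i\}}$. Decompose the core as $B_1=([1/2,1]\times S_1)\cup\tfrac12B$ and $B_i=[1/2,1]\times S_i$ for $i>1$. Thicken this to $\nu=\nu_1\cup\dots\cup\nu_{m-2}$, and set $X_i'=X_i\cup\nu_i$ for $i\notin I$ and $X_i'=\overline{X_i\backslash\nu}$ for $i\in I$.

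Bridge position enters only through the fact that each $S_J$ is a disjoint union of disks. That gives the key lemma: $B_J$ is obtained from a thickening of $S_J$ by attaching $0$- and $1$-handles. Consequently:
\begin{itemize}
\item every $X_J$ with $J\subset I^c$, or with $J$ meeting both $I$ and $I^c$ and $J\cap I\ne I$, gains only handles of coindex $0,1$ rel boundary, which are not counted;
\item every $X_J$ with $J\subsetneq I$ loses $\tfrac12\Delta^I_J\times h$ and stays diffeomorphic to itself, by a radial press;
\item $X_I$ loses exactly $h$.
\end{itemize}
This is the control over all $|J|\le|I|$ that the proposition needs, and your proposal does not supply it.
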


\subsection{The complexity-decreasing modification}\label{sbsec:modification}
In this section, we propose the appropriate modification from $(X_1,\cdots,X_{n-1})$ to $(X_1',\cdots,X_{n-1}')$ for proving Proposition~\ref{prop:modification}. Fix (rel boundary) handle decompositions on the sectors realizing the complexity $c(X_1,\cdots,X_{n-1})$. By assumption, for some $3\le m\le n$, $0\le k\le m-2$, some $m$-dimensional sector $X_I$ has a $k$-handle, and no $m'$-dimensional sector has a $k'$-handle if $(m',k')<(m,k)$ in lexicographical order. In particular, the $m'$-dimensional sectors with $3\leq m'<m$ are all $1$-handlebodies. We modify $X=X_1\cup\cdots\cup X_{n-1}$ to remove one $k$-handle in $X_I$, so that no sector $X_J$ with $|J|\le|I|$ gets more handles with coindices greater than $1$.

To start, we fix some notation. Without loss of generality, say $I=\{m-1,m,\cdots,n-1\}$. Let $h$ be a $k$-handle in $X_I$, $B=B(h)$ be its core, $S=S(h)=\partial B(h)$ be its attaching sphere. Thus, $B$ is a $k$-disk, $S$ is a $(k-1)$-sphere, and $h\cong_\alpha D^{m-k}\times B$. Let $\nu=\nu(h)\cong_\beta D^{n-m}\times h$ be a closed tubular neighborhood of $h$ in the codimension-$0$ submanifold $\cup_{i\in I}X_i$. Here, the diffeomorphisms $\alpha,\beta$ are fixed once and for all.

By assumption, $\partial X_I=\cup_{i=1}^{m-2}X_{\{i\}\cup I}$ is a multisection of $\partial X_I$ (after corner-smoothing). Since $S\subset\partial X_I$ has codimension $(m-1)-(k-1)=m-k\ge2$, Theorem~\ref{thm:bridge} allows us to isotope $S$ into bridge position; one can drag the handle $h$ along this isotopy without changing $X_I$. Henceforth, $S$ is in bridge position in $\partial X_I$, and we write $S_J=S\cap X_{J\cup I}$ for $J\subset I^c=\{1,\cdots,m-2\}$ (when $J=\{i\}$ is a singleton, we simply write $S_i$ instead of $S_{\{i\}}$).

We extend the decomposition $S=S_1\cup\cdots\cup S_{m-2}$ to a decomposition of $B$ (the \textit{$1$-cone-off decomposition}) as follows. Write $B=([1/2,1]\times S)\cup\tfrac12B$, where $\tfrac12B$ is a ball of half the size of $B$. For $i>1$, let $B_i=[1/2,1]\times S_i$. Let $B_1=([1/2,1]\times S_1)\cup\tfrac12B$. We extend the decomposition $B=B_1\cup\cdots\cup B_{m-2}$ to $h=h_1\cup\cdots\cup h_{m-2}$, and then to $\nu=\nu_1\cup\cdots\cup\nu_{m-2}$, in the natural way using the product structures. One should think of $\nu$ as an $n$-dimensional $k$-handle attached to $\cup_{i=1}^{m-2}X_i$, with attaching region denoted by $\partial_{bot}\nu$. By shrinking $h$ and $\nu$ if necessary, the product structures $\alpha,\beta$ are assumed to be chosen so that the restriction of $\nu=\nu_1\cup\cdots\cup\nu_{m-2}$ to $\partial_{bot}\nu$ is given by $(\partial_{bot}\nu)_i=\partial_{bot}\nu\cap X_i$, $i=1,\cdots,m-2$.

\textbf{The $(n,m,k)$-modification} ($3\le m\le n$, $0\le k\le m-2$):
\begin{align*}
X_i'&:=X_i\cup\nu_i,\quad 1\le i\le m-2,\\
X_i'&:=\overline{X_i\backslash\nu},\quad m-1\le i\le n-1.
\end{align*}

\subsection{Proof of the complexity-decreasing property}\label{sbsec:proof_decrease}
We prove that the proposed $(n,m,k)$-modification decreases the complexity of the pre-multisection. To do this, we analyze the change in the handle decomposition in each sector $X_J$. It suffices to show that for every $|J|\le|I|$, $X_J'$ (admits a handle decomposition that) has no more $i$-handles than $X_J$ for $i\le\dim(X_J)-2$, and that $X_I'$ has fewer $k$-handles than $X_I$.

\textbf{Case 0}: $J=I$.

By construction, $X_I'=\overline{X_I\backslash h}$, effectively removing the $k$-handle $h$ from $X_I$. Thus, $X_I'$ has one fewer $k$-handle than $X_I$ (and the same number of handles of other indices).

\textbf{Case 1}: $J\subsetneq I$.

We have $X_J'=\overline{X_J\backslash(\nu\cap X_J)}$. Recall that a closed neighborhood of $X_I$ in $\cup_{i\in I}X_i$ is identified with $\Delta^I\times X_I$ together with the decomposition into sectors. Under this identification, near $X_I$, $X_J$ is locally $\Delta^I_J\times X_I$, and by taking $\nu$ to be thin in the transverse direction we may identify $\nu\cap X_J$ with $\tfrac12\Delta^I_J\times h$. We observe that $X_J'$ is a deformation retract of~$X_J$, obtained by ``pressing'' $0\times h$ into $\Delta^I_J\times X_I$ along the radial directions. In fact, this retract witnesses a diffeomorphism $X_J'\cong X_J$ once the corners are smoothened; hence, $X_J'$ has the same handle decomposition as $X_J$.

\textbf{Case 2}: $J\subset I^c$.

We have $X_J'=X_J\cup\nu_J$. The attachment of $\nu_J$ is a thickening of the attachment of $B_J$ along $S_J$. The following lemma, whose proof crucially depends on the fact that $S\subset\partial X_I$ is in bridge position, implies that $X_J'$ admits a (rel boundary) handle decomposition with the same number of handles of every index, except possibly for some additional handles with coindices $0,1$.

\begin{Lem}\label{lem:handle_color}
$B_J$ is obtained from (a thickening of) $S_J$ by attaching $0,1$-handles.
\end{Lem}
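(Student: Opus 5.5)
The proof is a direct inspection of the $1$-cone-off decomposition, splitting into cases according to whether $J=\{1\}$. The only input from bridge position is that $S_1$ is a disjoint union of $(k-1)$-disks. Recall that $B=([1/2,1]\times S)\cup\tfrac12B$, that $B_i=[1/2,1]\times S_i$ for $i>1$, and that $B_1=([1/2,1]\times S_1)\cup\tfrac12B$. Hence for $J\subset I^c$ we have
\[
B_J=[1/2,1]\times S_J \quad\text{whenever } J\neq\{1\},
\]
because for $|J|\ge 2$ the ball $\tfrac12B$ is not contained in any $B_i$ with $i\ne1$, so it cannot lie in the intersection. The phrase ``$B_J$ is obtained from a thickening of $S_J$'' means that we build $B_J$ starting from the collar $[1-\epsilon,1]\times S_J$ around the attaching region $\{1\}\times S_J$. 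The remaining boundary, $\{1/2\}\times S_J\cup[1/2,1]\times\partial S_J$, is free.

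\textbf{Case $J\neq\{1\}$.} Here $B_J=[1/2,1]\times S_J$ is simply a product collar on $S_J$. So $B_J$ is obtained from the thickening of $S_J$ with no handles at all, which is trivially consistent with the claim. This holds for any $J\neq\{1\}$, whether or not $1\in J$.

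\textbf{Case $J=\{1\}$.} This is the only case with content, and it is where bridge position enters. Since $S$ is in bridge position in the multisection $\partial X_I=\cup_{i=1}^{m-2}X_{\{i\}\cup I}$, Definition~\ref{def:bridge}(2) applied to the singleton sector $X_{\{1\}\cup I}$ gives that $S_1$ is a disjoint union $D^{(1)}\sqcup\cdots\sqcup D^{(p)}$ of $(k-1)$-disks. We build $B_1$ in three steps:
\begin{enumerate}
\item Start with the thickening $[3/4,1]\times S_1$.
\item Add $\tfrac12B$ disjointly as a $k$-dimensional $0$-handle.
\item Add $[1/2,3/4]\times S_1$, which splits as $\bigsqcup_j [1/2,3/4]\times D^{(j)}$. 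Each piece $[1/2,3/4]\times D^{(j)}\cong D^{k-1}\times[0,1]$ is attached along $D^{(j)}\times\{0,1\}$. The end $\{3/4\}\times D^{(j)}$ lies in the thickened $S_1$, and the end $\{1/2\}\times D^{(j)}$ lies in $\partial(\tfrac12B)$. So each piece is exactly a $k$-dimensional $1$-handle.
\end{enumerate}
Thus $B_1$ is obtained from the thickening of $S_1$ by one $0$-handle and $p$ $1$-handles. In the degenerate case $k=0$, $S=\emptyset$ and $B_1=B$ is a single $0$-handle.

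The step that needs the most care is checking that the product structures make these genuinely handle attachments after corner-smoothing. The ends of each $1$-handle must meet the previous stage exactly in $D^{(j)}\times\partial[0,1]$, while its sides $\partial D^{(j)}\times[1/2,3/4]$ stay in the free boundary. This holds because the cone-off is built radially from the product structure $[1/2,1]\times S$. It is also precisely the point where bridge position is essential. Without it, $S_1$ would be an arbitrary codimension-$0$ submanifold of $S$, and $[1/2,3/4]\times S_1$ would be a cobordism carrying handles of every index of $S_1$ (crossed with an interval). These would produce handles of high index, that is, of coindex at least $2$.

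Finally, thickening by the $D^{n-m}\times D^{m-k}$ factors via $\alpha,\beta$ turns this decomposition of $B_J$ into the corresponding statement for $\nu_J$ attached to $X_J$. Only $0$- and $1$-handles relative to the attaching region are added, which after turning upside down is the coindex $0,1$ statement used in Case~2.
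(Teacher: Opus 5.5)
There is a genuine gap in the case $\{1\}\subsetneq J$. Your cases $J=\{1\}$ and $1\notin J$ are fine, but your formula $B_J=[1/2,1]\times S_J$ for all $J\neq\{1\}$ is false when $1\in J$.

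It is true that $\tfrac12B$ is not contained in $B_i$ for $i\neq1$. However, its boundary sphere $\{1/2\}\times S$ meets $B_i=[1/2,1]\times S_i$ in $\{1/2\}\times S_i$. This piece survives in the intersection, and it has the same dimension $k-|J|+1$ as $B_J$. The correct formula is
\[
B_J=\bigl([1/2,1]\times S_J\bigr)\cup\bigl(\{1/2\}\times S_{J\setminus\{1\}}\bigr)\qquad(\{1\}\subsetneq J).
\]
You can already see this for $k=2$, $m=4$ (a knot in bridge position in a Heegaard split $3$-manifold). There $B_{12}=B_1\cap B_2$ consists of U-shaped arcs: two radial segments joined by an arc $\{1/2\}\times(\text{component of }S_2)$ on the inner circle. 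It is not just the radial segments $[1/2,1]\times S_{12}$. So $B_J$ is not a product in this case, and your ``no handles needed'' conclusion does not apply.

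It follows that your claim that the only input from bridge position is that $S_1$ is a union of disks is also wrong. To finish this case, as the paper does, you need that $S_{J\setminus\{1\}}$ is a disjoint union of $(k-|J|+1)$-disks. This holds by bridge position, since it is a sector of $S$ indexed by a nonempty set not containing $1$. Moreover, $S_J$ is a union of disks lying in the boundaries of these disks.

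With this in hand, start from the collar $[1/2,1]\times S_J$. For each component $D$ of $S_{J\setminus\{1\}}$, glue on $\{1/2\}\times D$ along $\{1/2\}\times(D\cap S_J)$. This adds one $0$-handle, plus one $1$-handle for each component of $D\cap S_J$. It is the same star-shaped picture you used for $B_1$. Without the disk property of $S_{J\setminus\{1\}}$, this gluing could require handles of higher index, so this is exactly where bridge position is needed beyond $S_1$.
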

\begin{proof}
Since $S\subset\partial X_I$ is in bridge position, for every $\emptyset\ne J\subset I^c$, $S_J$ is a disjoint union of disks of dimension $k-|J|$.

In particular, since $S_1$ is a disjoint union of some $d$ disks, $B_1=([1/2,1]\times S_1)\cup\tfrac12B$ is obtained from $S_1$ by attaching one $0$-handle and $d$ $1$-handles (one can cancel a pair of $0,1$-handles if $d>0$). Similarly, if $\{1\}\subsetneq J$, then $B_J=([1/2,1]\times S_J)\cup(\{1/2\}\times S_{J\backslash\{1\}})$ is obtained from $S_J$ by attaching some $0,1$-handles. Finally, if $1\notin J$, then $B_J=[1/2,1]\times S_J$ is a thickening of $S_J$.
\end{proof}

\textbf{Case 3}: $J=J_1\cup J_2$, where $\emptyset\ne J_1\subsetneq I$, $\emptyset\ne J_2\subset I^c$. See Figure~\ref{fig:k-hd}.

\begin{figure}[ht]
    \centering
    \begin{subfigure}[t]{0.44\linewidth}
        \centering
        \includegraphics[width=\linewidth]{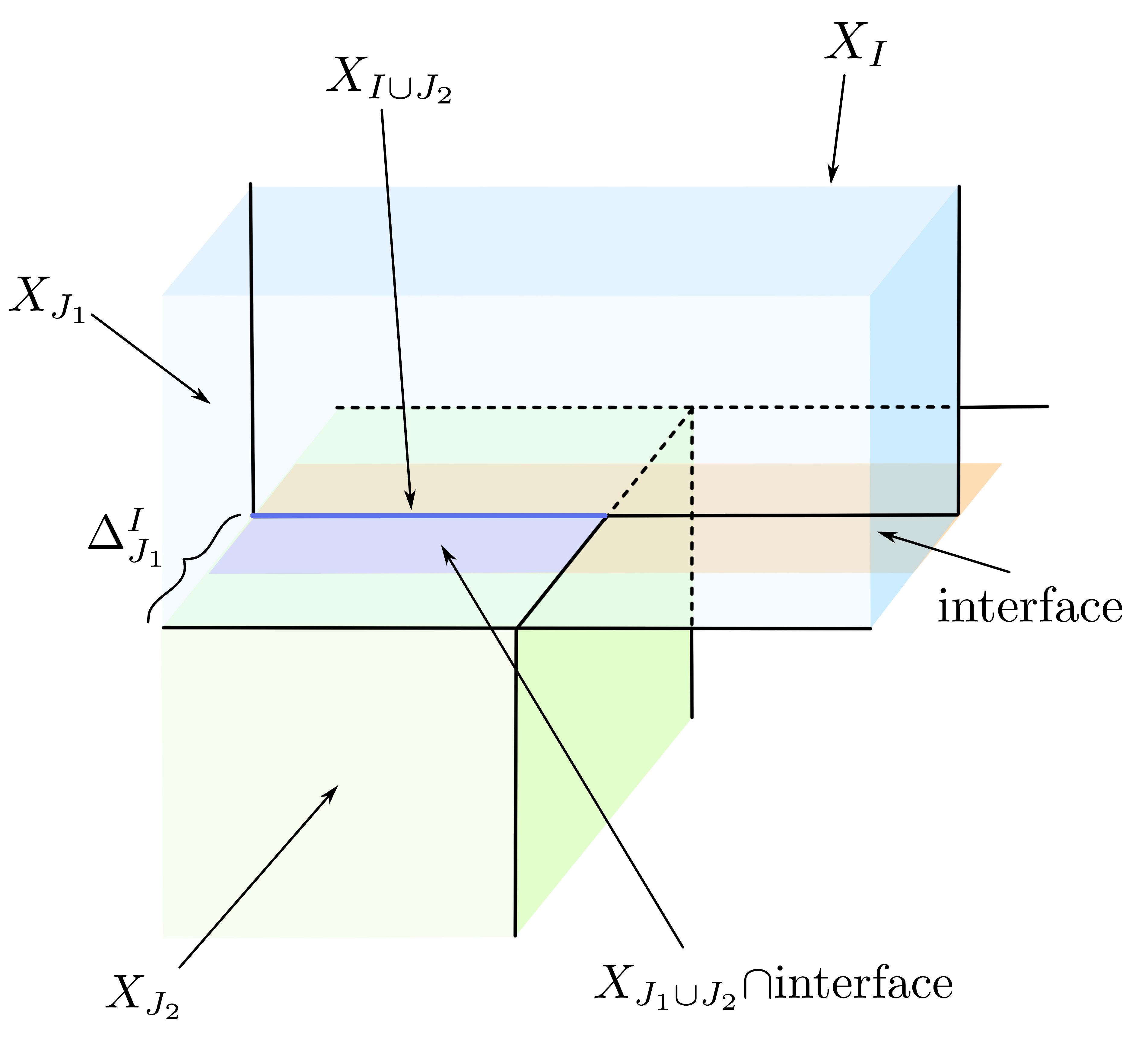}
        \caption{A neighborhood of $X_I$ in the multisection of $X$}
    \end{subfigure}
    \begin{subfigure}[t]{0.4\linewidth}
        \centering
        \includegraphics[width=\linewidth]{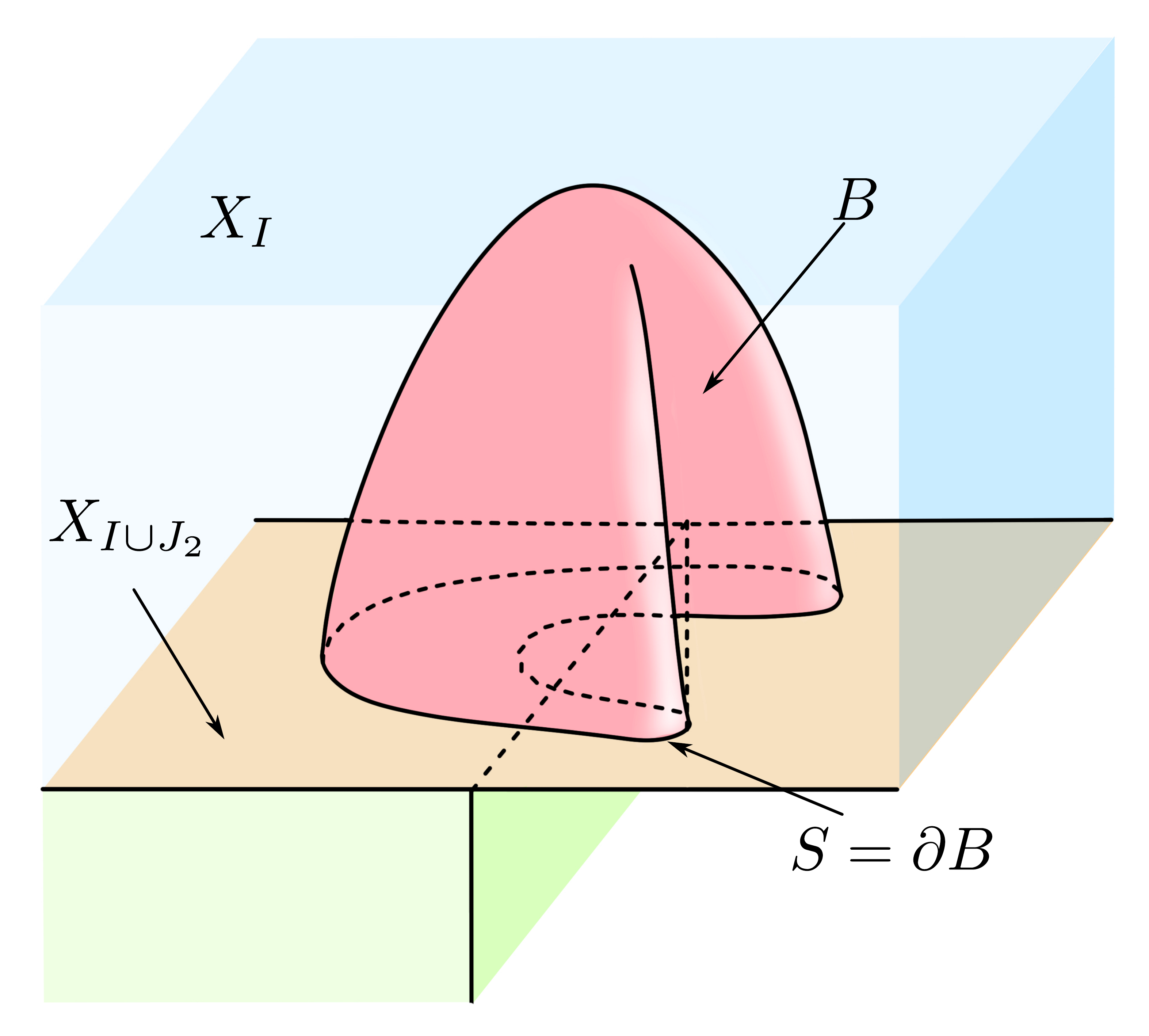}
        \caption{The sector $X_I$ and a collar neighborhood of $\partial X_I$ outside $X_I$, with the core of the handle $h$ drawn}
    \end{subfigure}
\caption{A schematic picture showing the relative position of the core $B$ of the $k$-handle $h$ in $X_I$, with attaching sphere $S\subset\partial X_I$. The thickened handle (not drawn) $\nu\cong D^{n-m}\times h$ is an $n$-dimensional $k$-handle attached to $\cup_{i\in I^c}X_i$, whose attaching region lies in the interface $\Delta^I\times\partial X_I$. Here, $J_1\subset I$, $J_2\subset I^c$ are index sets as in Case 3 of the argument.}
\label{fig:k-hd}
\end{figure}

By construction, $X_J=X_{J_1}\cap X_{J_2}$, $X_J'=X_{J_1}'\cap X_{J_2}'=\overline{X_{J_1}\backslash\nu}\cap(X_{J_2}\cup\nu_{J_2})$. Since the modification occurs in a tubular neighborhood of $X_I$, we examine the change within the standard model $V=\Delta^I\times X_I$. We have $$X_J\cap V=\Delta^I_{J_1}\times X_{I\cup J_2}.$$
Let $\Int_v(h)$ denote $h$ as an open handle (i.e. $\Int_v(h)=\Int(h)\cup\Int(\partial_{bot}h)$) and similarly $\Int_v(\nu)$. Let $\Int_v(\Delta^I_\bullet)$ denote the open cone on $\Delta^{I\backslash\bullet}$ with cone point $0\in\Delta^I$. Thus, $\Int_v(\nu)_{J_1}=\tfrac12\Int_v(\Delta^I_{J_1})\times\Int_v(h)$. Write $\partial_v\Delta^I_\bullet=\Delta^{I\backslash\bullet}$ and $\partial_vh=\partial h\backslash\Int(\partial_{bot}(h))$. Thus, $\partial_v\Delta^I_\bullet=\Delta^I_\bullet\backslash\Int_v(\Delta^I_\bullet)$ and $\partial_vh=h\backslash\Int_v(h)$. We have
\begin{align*}
X_J'\cap V&=\Big((\Delta^I_{J_1}\times X_I)\setminus\big(\tfrac12\Int_v(\Delta^I_{J_1})\times\Int_v(h)\big)\Big)\cap\big(X_{J_2}\cup(\tfrac12\Delta^I\times h_{J_2})\big)\\
&=\Big((\Delta^I_{J_1}\times X_I)\cap\big(X_{J_2}\cup(\tfrac12\Delta^I\times h_{J_2})\big)\Big)\setminus\Big(\big(\tfrac12\Int_v(\Delta^I_{J_1})\times\Int_v(h)\big)\cap\big(X_{J_2}\cup(\tfrac12\Delta^I\times h_{J_2})\big)\Big)\\
&=\big((\Delta^I_{J_1}\times X_{I\cup J_2})\cup(\tfrac12\Delta^I_{J_1}\times h_{J_2})\big)\setminus\big(\tfrac12\Int_v(\Delta^I_{J_1})\times\Int_v(h)_{J_2}\big).
\end{align*}
The last row is a deformation retract of $(\Delta^I_{J_1}\times X_{I\cup J_2})\cup(\tfrac12\Delta^I_{J_1}\times h_{J_2})$, although the two spaces are not diffeomorphic since the latter is not a manifold. Nevertheless, the attachment of $\tfrac12\Delta^I_{J_1}\times h_{J_2}$ to $\Delta^I_{J_1}\times X_{I\cup J_2}$ is an attachment of some $0,1$-handles by Lemma~\ref{lem:handle_color}, and the deformation retract flattens these handles while preserving their indices. We see that $X_J'\cap V$ is obtained from $(\Delta^I_{J_1}\times X_{I\cup J_2})\backslash(\tfrac12\Delta^I_{J_1}\times\Int(\partial_{bot}h)_{J_2})$ by attaching $0,1$-handles. Since (after corner smoothing) $(\Delta^I_{J_1}\times X_{I\cup J_2})\backslash(\tfrac12\Delta^I_{J_1}\times\Int(\partial_{bot}h)_{J_2})$ is diffeomorphic to $\Delta^I_{J_1}\times X_{I\cup J_2}=X_J\cap V$, we see that $X_J'$ is obtained from $X_J$ by attaching $0,1$-handles. Turning upside down, we conclude that $X_J'$ admits a (rel boundary) handle decomposition with the same number of handles of every index, except possibly for some additional handles with coindices $0,1$. This finishes the proof of Proposition~\ref{prop:modification}, and hence of Theorem~\ref{thm:multisection}.\qed

\begin{Rmk}
In the proof of Theorem~\ref{thm:multisection}, we have only used the existence of bridge position (Theorem~\ref{thm:bridge}) in a weak sense, namely that we needed that every sector $S_I\subset X_I$ is a disjoint union of disks, but not that they are $\partial$-parallel.
\end{Rmk}

\section{Definitions: General case}\label{sec:def_general}
In this section, we generalize the notions of multisections and bridge positions to the case of manifolds with corners. In practice, we often smooth corners to regard them simply as manifolds with boundary.

\subsection{Multisection}
In what follows, we will sometimes consider compact manifolds $M$ with corners, whose boundary $\partial M$ comes with a decomposition $\partial_-M\cup\partial_+M$ into two codimension-$0$ submanifolds, called the \textit{bottom boundary} and the \textit{top boundary} of $M$, respectively, glued along their common boundary $\partial(\partial_-M)=\partial(\partial_+M)$ which is assumed to be a union of strata in $M$ of codimension at least $2$. After smoothing corners except leaving $\partial(\partial_\pm M)$ as a codimension-$2$ corner, and thickening $\partial(\partial_\pm M)$ into a collar in $\partial M$, one can regard such a manifold $M$ as a cobordism from $\partial_-M$ to $\partial_+M$ rel boundary, and talk about handle decompositions of $M$ rel $\partial_-M$. See Figure~\ref{fig:rel_cob}.

\begin{figure}[htb]
\begin{center}
\begin{tikzpicture} 
\begin{scope}
 \draw (1,2) -- (0,1) -- (1,0) -- (3,1) -- (1,2) -- (1,0);
 \draw[dashed] (0,1) -- (3,1);
 \draw[white,line width=5pt,fill=white] (1.4,1.5) .. controls +(0.7,-0.3) and +(-0.8,0.5) .. (3,2.5) .. controls +(0.8,-0.5) and +(-0.6,0.4) .. (2.1,0.9);
 \draw[fill=blue,opacity=0.1] (0,1) -- (1,0) -- (3,1);
 \draw (1.4,1.5) .. controls +(0.7,-0.3) and +(-0.8,0.5) .. (3,2.5) .. controls +(0.8,-0.5) and +(-0.6,0.4) .. (2.1,0.9);
 \draw (2.5,1.9) arc (-90:20:0.3) (2.8,2.2) arc (80:200:0.2);
 \draw[blue!40] (0,0.2) node {$\partial_-M$};
 \draw (0.2,1.8) node {$\partial_+M$};
\end{scope}
\begin{scope} [xshift=4cm]
 \draw[->] (0,1) -- (2,1);
 \draw (1,1) node[above] {smooth} node[below] {corners};
\end{scope}
\begin{scope} [xshift=8cm,yshift=0.5cm]
 \draw (0,1) -- (0,0) arc (-130:-50:1.5) -- (1.93,1) arc (-50:-130:1.5) arc (130:50:1.5);
 \draw[fill=blue,opacity=0.1] (0,0) arc (-130:-50:1.5) arc (50:130:1.5);
 \draw[dashed] (0,0) arc (130:50:1.5);
 \draw[white,line width=5pt,fill=white] (0.5,1) .. controls +(0.5,0.2) and +(-0.8,0) .. (1,2.5) .. controls +(0.8,0) and +(-0.5,0.2) .. (1.5,1);
 \draw (0.5,1) .. controls +(0.5,0.2) and +(-0.8,0) .. (1,2.5) .. controls +(0.8,0) and +(-0.5,0.2) .. (1.5,1);
 \draw (0.95,1.7) arc (-55:55:0.3) (1,1.77) arc (240:120:0.2);
 \draw[blue!40] (0,-0.6) node {$\partial_-M$};
 \draw (0,1.8) node {$\partial_+M$} (3.2,0.5) node {$I\times\partial(\partial_{\pm}M)$};
\end{scope}
\end{tikzpicture}
\end{center}
    \caption{Regard a manifold $M$ with corners as a relative cobordism from $\partial_-M$ to $\partial_+M$.}
    \label{fig:rel_cob}
\end{figure}

\begin{Def}[Multisection with corners]\label{def:multisection_general}
Let $n\ge2$ be an integer. An \textit{$(n-1)$-section} (or \textit{multisection}) of a compact $n$-manifold $X$ with corners is a decomposition $X=X_1\cup\cdots\cup X_{n-1}$, so that if we set $X_I:=\cap_{i\in I}X_i$ for $\emptyset\ne I\subset\{1,\cdots,n-1\}$, the following conditions hold:
\begin{enumerate}
\item Each $X_I\subset X$ is a submanifold with corners, whose codimension-$k$ stratum contains $$\bigcup_{J\supset I,|J|-|I|=k}\Int(X_J)$$ as a union of connected components. It is also assumed that $\partial X_I$ is in general position with respect to the corner stratification on $\partial X$.
\item Let $\partial_-X_I:=\cup_{J\supsetneq I}X_J$ and $\partial_+X_I:=\partial X_I\cap\partial X=\overline{\partial X_I\backslash\partial_-X_I}$. Then, after smoothing the corners, $X_I$ admits a handle decomposition rel $\partial_-X_I$ with handles of coindices $0$ and $1$.
\end{enumerate}
\end{Def}
\begin{Rmk}
If $X=X_1\cup\cdots\cup X_{n-1}$ is a multisection with corners, then $\Sigma:=X_{\{1,\cdots,n-1\}}\subset X$ is an embedded surface with corners.
\end{Rmk}

Let $\Delta^I$, $\Delta^I_J$ be as in Section~\ref{sbsec:def_multisection_closed}. If $X=X_1\cup\cdots\cup X_{n-1}$ is a multisection with corners, $\emptyset\ne I\subset\{1,\cdots,n-1\}$, then $\Delta^I$ is a local transverse model for $X_I$.

\begin{Ex}
If $X=X_1\cup\cdots\cup X_{n-1}$ is a multisection with corners, then for $I\ne\emptyset,\{1,\cdots,n-1\}$, after the appropriate corner-smoothing, $\partial_-X_I=\cup_{j\not\in I}X_{I\cup\{j\}}$ is a multisection with corners.
\end{Ex}

\begin{Ex}
If $X=X_1\cup\cdots\cup X_{n-1}$ is a multisection of a closed manifold in the sense of Definition~\ref{def:multisection}, then it is a multisection in the sense of Definition~\ref{def:multisection_general}, with $\partial X_I=\partial_-X_I$ for all $I$.

More generally, if $X$ is a multisected closed manifold and $S\subset X$ is a closed submanifold in bridge position, then $X\backslash\nu(S)$ with its induced decomposition from $X$ is a multisection with corners, since one can check that deleting a $\partial$-parallel disk in a sector affects the relative handle decomposition of the sector by adding a coindex-$1$ handle. This example will be further generalized in Lemma~\ref{lem:bridge_complement}.
\end{Ex}

\subsection{Bridge position}
\begin{Def}[Bridge position in a multisection with corners]\label{def:bridge_general}
Let $X=X_1\cup\cdots\cup X_{n-1}$ be a multisection of a manifold with corners. A properly embedded submanifold $S\subset X$ with corners is said to be in \textit{bridge position} if the following conditions hold for $S_I:=S\cap X_I$, $\partial_\pm S_I:=S_I\cap\partial_\pm X_I$, $\emptyset\ne I\subset\{1,\cdots,n-1\}$:
\begin{enumerate}
\item The submanifold $S$ is in general position with respect to the decomposition of $X$. More precisely, $S$ is transverse to each sector $X_I$ within each stratum of $X$.
\item For each $I\ne\{1,\cdots,n-1\}$, after smoothing the corners, $(X_I,S_I)$ is obtained from (a thickening of) $(\partial_-X_I,\partial_-S_I)$ by attaching ambient handles for $S_I$ with coindex $0$, and then handles for $X_I$ with coindices $0,1$.
\end{enumerate}
\end{Def}
Thus, every sector $S_I\subset X_I$ of a submanifold $S\subset X$ in bridge position consists of $\partial_-$-parallel disks as well as cylinders between $\partial_\pm X_I$.

The usefulness of Definitions~\ref{def:multisection_general} and~\ref{def:bridge_general} is reflected in the following lemma, which will allow us to prove a strengthening of Theorem~\ref{thm:bridge} inductively.

\begin{Lem}\label{lem:bridge_complement}
If $X$ is a multisection with corners and $S\subset X$ is a properly embedded submanifold with corners in bridge position, then $X\backslash\nu(S)$, with its induced decomposition from $X$, is a multisection with corners.
\end{Lem}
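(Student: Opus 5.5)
The plan is to check the two conditions of Definition~\ref{def:multisection_general} for $X^\circ:=X\backslash\nu(S)$ with sectors $X^\circ_I:=X_I\backslash\nu(S)$. Here $\nu(S)$ is a small open tubular neighborhood of $S$, chosen compatibly with the sectors. Since $S$ is transverse to every $X_I$ within every stratum, the normal bundle of $S_I$ in $X_I$ is the restriction of the normal bundle of $S$ in $X$. So we can pick $\nu(S)$ so that $\nu(S)\cap X_I=\nu(S_I\subset X_I)$ for all $I$, in a way compatible with the local transverse models $\Delta^I\times X_I$.

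\textbf{Condition (1).} The corner structure follows from this choice. The new boundary $\partial\nu(S)$ becomes part of $\partial X^\circ$, and the sectors meet it transversely. The new codimension-$k$ strata of $X^\circ_I$ are the old ones, minus $\nu(S)$, together with the pieces $\partial\nu(S)\cap X_J$ for $J\supsetneq I$. These are components of the corresponding strata in the stratification of $\partial X^\circ$, and general position with respect to that stratification holds. The bottom boundary is $\partial_-X^\circ_I=\cup_{J\supsetneq I}X^\circ_J=\partial_-X_I\backslash\nu(\partial_-S_I)$. The top boundary $\partial_+X^\circ_I$ is the old top boundary minus $\nu(\partial_+S_I)$, together with $\partial\nu(S)\cap X_I$.

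\textbf{Condition (2).} This is the real content. By Definition~\ref{def:bridge_general}, after smoothing, $(X_I,S_I)$ is built from a collar $(\partial_-X_I\times[0,1],\partial_-S_I\times[0,1])$ in two stages. First we attach ambient coindex-$0$ handles $D^{s}\times D^{m-s}$ for $S_I$, where $s=\dim S_I$ and $m=\dim X_I$. Each such handle is a thickened copy of a coindex-$0$ handle of $S_I$, so as an $m$-dimensional handle it has index $s$. Then we attach handles of coindices $0,1$ for $X_I$, away from $S_I$. We remove $\nu(S_I)$ stage by stage.
\begin{enumerate}[(a)]
\item Collar minus $\nu(\partial_-S_I\times[0,1])$. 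This is a collar of $\partial_-X_I\backslash\nu(\partial_-S_I)$, i.e. a collar on the new $\partial_-X^\circ_I$.
\item An ambient handle $D^s\times D^{m-s}$ minus $\nu(D^s\times 0)$. This is $D^s\times(D^{m-s}\backslash\Int\tfrac12D^{m-s})\cong D^s\times S^{m-s-1}\times[\tfrac12,1]$, glued to what was built so far along $\partial D^s\times(\cdots)$. Relative to the part that becomes top boundary, $\partial\nu(S)$, this piece is a collar on the portion $D^s\times\tfrac12 S^{m-s-1}$. Turned around as in (1'), it is built with one coindex-$1$ handle. More concretely, $D^s\times S^{m-s-1}\times[\tfrac12,1]$ deformation-retracts onto $(\partial D^s\times\cdots)\cup(D^s\times S^{m-s-1}\times 1)$. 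Splitting $S^{m-s-1}$ into a $0$-cell and a top cell, attaching it costs exactly an $m$-dimensional handle of index $m-1$ plus a handle of index $m$, i.e. coindices $1$ and $0$. This matches the closed-case remark in the Example above that deleting a $\partial$-parallel disk adds a coindex-$1$ handle.
\item The remaining coindex-$0,1$ handles of $X_I$ miss $\nu(S_I)$ and are attached unchanged.
\end{enumerate}

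The main obstacle is step (b) done honestly in the relative-cobordism framework, because the coindex count depends on which portion of the handle boundary is declared top boundary. Here $\partial\nu(S)\cap X_I$ is top boundary, while the attaching region $\partial D^s\times(\cdots)$ meets the bottom boundary. I would handle this by writing $(D^s\times D^{m-s},D^s\times0)$ as $D^s\times(D^{m-s},0)$ and using a Morse function $f=-|x|^2+|y|^2$, restricted to the complement $|y|\ge\tfrac12$. The restricted function has no interior critical points. What remains is a boundary-critical analysis on $|y|=\tfrac12$, where the critical points are of the $S^{m-s-1}$ factor, giving only top-type handles. Alternatively, one can directly exhibit the complement as (collar)$\cup$(one $(m-1)$-handle)$\cup$($m$-handle) via a genus-$0$ splitting of $S^{m-s-1}$ when $m-s\ge2$, which holds because the codimension is at least $2$. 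Once (a)–(c) are done, corner smoothing and the compatibility of $\nu(S)$ with the models $\Delta^I$ give the lemma.
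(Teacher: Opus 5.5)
Your overall plan matches the paper's. There the proof is one line: by \cite[Proposition~6.2.1]{gompf20234}, each coindex-$i$ ambient handle for $S_I$ contributes a single coindex-$(i+1)$ handle to $X_I\setminus\nu(S_I)$, and the coindex-$0,1$ handles of $X_I$ are untouched. Your treatment of condition (1) and your steps (a) and (c) are fine.

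The gap is in step (b), which is exactly where that fact has to be proved. You model a coindex-$0$ ambient handle as an honest $m$-dimensional $s$-handle $D^s\times D^{m-s}$ glued along $\partial D^s\times D^{m-s}$. That is not an ambient handle, since it would add to $X_I$ a handle of coindex $m-s\ge 2$. In that model your conclusion is false: $D^s\times S^{m-s-1}\times[\tfrac12,1]$ relative to $\partial D^s\times S^{m-s-1}\times[\tfrac12,1]$ has relative homology $H_{*-s}(S^{m-s-1})$. This is nonzero in degree $s$, so an index-$s$ handle is forced. The Morse function $-|x|^2+|y|^2$ you propose is the Morse function of that genuine $s$-handle, so it encodes the same wrong object. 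Your count ``one handle of index $m-1$ plus one of index $m$'' is also off in the correct model, where the piece is a single handle of index $m-1$.

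The missing ingredient is the actual shape of an ambient handle, as in Gompf--Stipsicz \S6.2 or the paper's PL subsection. It is $(D^s\times\Delta^{m-s},D^s\times\{*\})$ with $\Delta^{m-s}$ a half-disk, glued along the ball $(\partial D^s\times\Delta^{m-s})\cup(D^s\times\partial_-\Delta^{m-s})$. So it lies in a collar and changes only $S_I$, not $X_I$.

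Once the bottom face $D^s\times\partial_-\Delta^{m-s}$ is included in the gluing region, the computation is immediate:
\begin{itemize}
\item $\Delta^{m-s}\setminus\nu(*)\cong\partial\Delta^{m-s}\times[\tfrac12,1]$ is a collar on $\partial_-\Delta^{m-s}$ followed by the single handle $\partial_+\Delta^{m-s}\times[\tfrac12,1]$ of index $m-s-1$, attached along $\partial(\partial_+\Delta^{m-s})\times[\tfrac12,1]$.
\item Taking the product with $(D^s,\partial D^s)$ yields exactly one $m$-dimensional handle of index $m-1$. That is coindex $1$, with no coindex-$0$ handle.
\end{itemize}
Your ``genus-$0$ splitting'' of the normal sphere points in this direction. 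It works only once the splitting is into the glued hemisphere $\partial_-\Delta^{m-s}$ and the free hemisphere $\partial_+\Delta^{m-s}$; with that correction, (a)--(c) go through and give the lemma.
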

\begin{proof}
In each sector $S_I\subset X_I$, $I\ne\{1,\cdots,n-1\}$, every ambient handle for $S_I$ of coindex $i$ gives rise to a handle of $X_I\backslash\nu(S_I)$ of coindex $i+1$; see \cite[Proposition~6.2.1]{gompf20234}. The statement follows from the case $i=0$.
\end{proof}

\section{Bridge positions}\label{sec:bridge}
In this section, we prove the following generalization of Theorem~\ref{thm:bridge} in the case of manifolds with corners.

\begin{Thm}\label{thm:bridge_general}
If $X=X_1\cup\cdots\cup X_{n-1}$ is a multisection with corners and $S\subset X$ is a properly embedded submanifold with codimension at least $2$, whose boundary is in general position in $\partial X$ (that is, $S$ is transverse to each $X_I$ within each stratum of $X$ of positive codimension), then $S$ can be isotoped rel boundary to be in bridge position.
\end{Thm}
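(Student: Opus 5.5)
We argue by induction on $n=\dim X$, and inside a fixed dimension by a complexity-decreasing argument modeled on the proof of Proposition~\ref{prop:modification}. Put $s=\dim S\le n-2$. By general position (relative to the boundary condition) we may isotope $S$ rel $\partial S$ so that it is transverse to every sector $X_I$ in every stratum. Then each $S_I=S\cap X_I$ is a properly embedded submanifold with corners of $X_I$, with $\partial_-S_I=S\cap\partial_-X_I$ and $\partial_+S_I=S\cap\partial_+X_I$. For each $I\ne\{1,\cdots,n-1\}$, choose a relative handle decomposition of the pair $(X_I,S_I)$ rel $(\partial_-X_I,\partial_-S_I)$. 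Using \cite[\S6.2]{gompf20234}, arrange it so that all ambient handles of $S_I$ are attached first, followed by the handles of $X_I$ of coindices $0,1$ that are already present in the multisection. We then measure the failure of bridge position by a lexicographic complexity: for each $|I|$, the sum over sectors of $|I|$ of the vectors (number of ambient $S_I$-handles of coindex $\ge1$, listed by decreasing index). As in Section~\ref{sbsec:complexity}, the sectors of largest dimension are ordered first. Bridge position holds exactly when this complexity vanishes.

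Suppose the complexity is nonzero. Take the lexicographically smallest pair $(m,k)$ such that some $m$-dimensional sector $X_I$ carries an ambient $k$-handle $h$ of $S_I$ with $k\le\dim S_I-1$. Its attaching sphere $A$ lies in $\partial_-S_I\cup(\text{earlier handles})$, and by the usual handle reordering we may push it into $\partial_-X_I$. Now $\partial_-X_I$ is itself a multisection with corners (by the Example following Definition~\ref{def:multisection_general}), of dimension $m-1<n$, and $A$ has codimension $(m-1)-(k-1)\ge 2$ there, because $k\le\dim S_I-1\le m-3$. So the inductive hypothesis in dimension $m-1$, applied to $A$ (or better, to the core of the relevant part of $\partial_-S_I$ near $A$, rel its boundary), lets us isotope $A$ into bridge position in $\partial_-X_I$, dragging $h$ along. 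Next we perform the analogue of the $(n,m,k)$-modification, now on $S$ instead of on $X$. We take the $1$-cone-off decomposition $B=B_1\cup\cdots$ of the core of $h$ and push the thickened piece $\nu(h)\cap S$ across the interface $\Delta^I\times\partial_-X_I$ into the sectors $X_j$, $j\notin I$. Concretely, we isotope $S$ so that the $S$-part of $h$ leaves $X_I$ and its pieces land in $X_{J}$ for $J\subset I^c$ according to the cone-off. This is an isotopy of $S$, not a change of $X$.

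To verify that the complexity decreases, we repeat the four cases of Section~\ref{sbsec:proof_decrease} with ambient handles in place of handles. In Case 0, $S_I'$ loses the handle $h$. In Case 1 ($J\subsetneq I$), $(X_J,S_J')\cong(X_J,S_J)$ by the radial pressing argument. In Cases 2 and 3, Lemma~\ref{lem:handle_color} (whose proof only used that $A$ is in bridge position) shows that the new pieces of $S_J'$ are obtained by adding ambient $0,1$-handles only. Since $k\ge0$ and we only add low-index handles in sectors of lower complexity position, this is acceptable as long as ambient $0$- and $1$-handles are harmless. This forces one to order the complexity so that ambient handles of \emph{low} index in big sectors do not count, and here is the subtlety. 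Ambient $1$-handles of coindex $\ge1$ do count when $\dim S_I\ge 2$. Turning handles upside down as in the paper, and counting coindices rather than indices, fixes this: we build from $\partial_-$, so that disks attached as cones have coindex $0$, and the cone-off adds handles of coindex $0,1$ relative to the new bottom. The base case $n=2$ is trivial, since then $S$ is a finite set of points, which only need to avoid lower strata.

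The main obstacle I expect is exactly this bookkeeping. One must check that after the modification, the new pieces of $S$ in the higher-index sectors $X_{J_1\cup J_2}$ are $\partial_-$-parallel disks rather than merely disks, so that they contribute only coindex-$0$ ambient handles. One must also show that the isotopy of $A$ inside $\partial_-X_I$, supplied by induction, does not disturb $\partial_+$ or the boundary condition; the relative (rel boundary) form of the inductive statement is needed for that. If $\partial$-parallelism fails directly, I would first try an additional finger move, pushing each disk component of $S_I$ through its parallelism region into $\partial_-X_I$, thereby converting remaining coindex-$1$ ambient handles into handles of neighboring sectors of smaller complexity.
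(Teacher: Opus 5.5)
There is a genuine gap, and it sits exactly at the step you flag as the main obstacle. You transplant the modification from the proof of Theorem~\ref{thm:multisection}, but it does not survive the passage from redefining sectors to isotoping $S$ inside a fixed multisection.

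\emph{The induction is applied to the wrong object.} The attaching sphere $A$ of $h$ lies on $\partial_-S_I$, which by your minimal choice of $(m,k)$ is already in bridge position in $\partial_-X_I$. You cannot isotope $A$ freely in $\partial_-X_I$ and still have an ambient handle of $S$: $A$ must stay on $\partial_-S_I$. Dragging $\partial_-S_I$ along would change $S$ in the smaller sectors and destroy their bridge position. Nor can you run the induction inside $\partial_-S_I$, since its induced decomposition is not a multisection.

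\emph{The $1$-cone-off is not available.} With the sectors fixed, you cannot prescribe the pieces of the pushed handle. After pushing $\nu(h)$ across the interface, they are dictated by the product structure of the collar, i.e.\ by $B\cap X_{I\cup J}$, where $B$ is the core viewed in $\partial_-X_I$. Suppose you first moved $B$ in $\partial_-X_I$ so that these intersections are the cone-off pieces. That is still not a bridge position. With $A_1$ a union of $d$ disks, $B_1=([1/2,1]\times A_1)\cup\tfrac12B$ is a $k$-ball whose bottom boundary is $S^{k-1}$ minus $d$ disks and whose top is $d$ disks. So it carries $d-1$ ambient handles of coindex $1$ when $d\ge2$. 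The conclusion of Lemma~\ref{lem:handle_color} (``only $0,1$-handles'') is harmless for $1$-handlebodies, but bridge position allows only coindex-$0$ ambient handles. These new coindex-$1$ handles are counted by the complexity, possibly in a lexicographically earlier block (e.g.\ $J=\{1\}$ when $|I|>1$). So nothing forces the complexity to drop. Switching from indices to coindices does not change this, and the suggested finger move is not an argument.

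The missing idea is to put the \emph{core} of $h$, rather than its attaching sphere, into bridge position in the complement of $\partial_-S_I$.

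Since $h$ is an ambient handle of lowest index, one can arrange that $h$ and its core $B$ lie flat in a level $\{1/2\}\times\partial_-X_I$ of a collar, with $S_I$ a product elsewhere below level $1$. Then $B$ (of dimension $t$, your $k$) is a properly embedded disk in $Y=\partial_-X_I\setminus\nu(\partial_-S_I)$. By Lemma~\ref{lem:bridge_complement}, $Y$ is a multisection with corners; this is where the bridge position of $\partial_-S_I$ enters. The codimension of $B$ in $Y$ is $m-1-t\ge n-s\ge2$.

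The inductive hypothesis, applied rel $\partial B\subset\partial\nu(\partial_-S_I)$, puts $B$ in bridge position in $Y$. This is why the theorem is formulated for multisections with corners and rel boundary. Each $B\cap X_{I\cup J}$ then consists of $\partial_-$-parallel disks and cylinders running from $\partial_-X_{I\cup J}$ to $\partial_-S_I$.

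One then pushes $\nu(h)=\{1/2\}\times\tfrac12\Delta^I\times h$ straight down across the interface to level $-1/2$ of a two-sided collar. Each $\partial_-$-parallel disk produces exactly one new coindex-$0$ ambient handle in $S'_J$. Each cylinder changes $S_J$ only by an isotopy. For $J\subsetneq I$, the change is an isotopy supported in a ball. That is what makes the complexity decrease.
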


The strategy is analogous to that in Section~\ref{sec:multisection}, as follows: In Section~\ref{sbsec:complexity_sub}, we define the \textit{complexity} of a properly embedded submanifold $S\subset X$ in general position in a multisection with corners, valued in a well-ordered set, so that $S$ has complexity $0$ (the unique minimum) if and only if it is in bridge position in $X$. Then, in Sections~\ref{sbsec:modification_sub} and~\ref{sbsec:proof_decrease_sub}, we show that it is always possible to isotope $S$ rel boundary to decrease its complexity if the complexity is nonzero. This implies that $S$ can be isotoped rel boundary to be in bridge position in finitely many steps, proving Theorem~\ref{thm:bridge_general} and hence Theorem~\ref{thm:bridge} as a special case. We will induct on $n=\dim X$. The base case is $n=2$, where there is nothing to prove.

\subsection{Complexity of a submanifold}\label{sbsec:complexity_sub}
We first define the complexity of a cobordism rel boundary of pairs of manifolds, designed specifically with our problem in mind.

\begin{Def}
Let $(M,S)\colon(\partial_-M,\partial_-S)\to(\partial_+M,\partial_+S)$ be a cobordism rel boundary of pairs of manifolds with boundary, where $S$ has dimension $s$. The \textit{complexity} of $(M,S)$ is $$c(M,S):=\min\{(\#\text{ambient }(s-1)\text{-handles},\cdots,\#\text{ambient }0\text{-handles)}\}\in(\Z_{\ge0}\cup\{\infty\})^s,$$ where $(\Z_{\ge0}\cup\{\infty\})^s$ is equipped with the lexicographical order, and the minimum is taken over all handle decompositions of $(M,S)$ rel $(\partial_-M,\partial_-S)$ in which ambient handles for $S$ are attached before handles for $M$. If there is no such handle decomposition, the complexity is set to be $(\infty,\cdots,\infty)$. The \textit{complexity} of a pair $(M,S)$ of manifolds with corners where $\partial M$ comes with a decomposition $\partial M=\partial_+M\cup\partial_-M$ is defined to be the complexity of $(M,S)$ as a cobordism rel boundary between pairs of manifolds with boundary (illustrated on $M$ as in Figure~\ref{fig:rel_cob}).
\end{Def}

\begin{Def}\label{def:complexity_sub}
Let $X=X_1\cup\cdots\cup X_{n-1}$ be a multisection with corners and $S\subset X$ be a properly embedded $s$-dimensional submanifold with corners in general position, $s\le n-2$. The \textit{complexity} of $(X,S)$ is $$c(X,S)=c(X_1,\cdots,X_{n-1};S)=\left(\sum_{|I|=1}c(X_I,S_I),\cdots,\sum_{|I|=s}c(X_I,S_I)\right)\in\Z_{\ge0}^{s(s+1)/2}.$$
\end{Def}
Note that in Definition~\ref{def:complexity_sub}, all coordinates of $c(X_1,\cdots,X_{n-1};S)$ are finite. This is because in each sector $X_I$, by the assumption on codimension, one can assume that $S_I$ is disjoint from all handles of $X_I$ rel $\partial_-X_I$, which are of coindices $0,1$.

Theorem~\ref{thm:bridge_general} is implied by the following proposition.

\begin{Prop}\label{prop:modification_sub}
Let $(X,S)$ be a pair as in Definition~\ref{def:complexity_sub}. Let $c_r(\cdot)$ denote the $r$-th coordinate of $c(\cdot)$. If $r$ is the maximal index with $c_r(X_1,\cdots,X_{n-1};S)>0$, then there is some submanifold $S'$ in general position that is isotopic to $S$ rel boundary, such that $c_\ell(X_1,\cdots,X_{n-1};S')\le c_\ell(X_1,\cdots,X_{n-1};S)$ for all $\ell\le r$, with strict inequality when $\ell=r$.
\end{Prop}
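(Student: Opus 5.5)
We mimic the proof of Proposition~\ref{prop:modification}, replacing the manifold handles by ambient handles of $S$. Fix handle decompositions of the pairs $(X_I,S_I)$ rel $(\partial_-X_I,\partial_-S_I)$ realizing $c(X_I,S_I)$, with ambient handles for $S_I$ attached before handles of $X_I$; by Definition~\ref{def:complexity_sub} these have finite complexity. Choose the lexicographically minimal pair $(|I|,k)$ such that some sector $S_I$ with $|I|$ minimal among the offending ones has an ambient $k$-handle $h$ of coindex $\ge1$, i.e.\ $k\le\dim S_I-1$. Minimality implies that all sectors $S_{I'}$ with $|I'|<|I|$ are already in bridge position in $X_{I'}$. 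We then isotope $S$ to push $h$ out of $X_I$ into the neighbouring sectors $X_j$, $j\notin I$. The goal is that this removes one ambient $k$-handle from $S_I$ and creates only coindex-$0$ ambient handles in the other sectors $X_J$ with $|J|\le|I|$.

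\textbf{Preparing the attaching sphere.} Let $B$ be the core of $h$, a $k$-disk in $X_I$, with attaching sphere $A=\partial B\subset\partial_-X_I$. By the first Example of Section~\ref{sec:def_general}, $\partial_-X_I=\cup_{j\notin I}X_{I\cup\{j\}}$ is a multisection with corners of lower dimension. To put $A$ in bridge position there, apply the inductive hypothesis (Theorem~\ref{thm:bridge_general} in dimension $\dim X_I-1<n$) to the pair $(\partial_-X_I,\partial_-S_I)$, which has codimension $\ge2$. This puts $\partial_-S_I$ in bridge position, and Lemma~\ref{lem:bridge_complement} then makes the complement $\partial_-X_I\setminus\nu(\partial_-S_I)$ a multisection with corners. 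Since $A$ has codimension $\ge2$ there, we apply the inductive hypothesis again to isotope $A$ into bridge position inside $\partial_-X_I$, disjoint from $\partial_-S_I$, dragging $h$ and $S$ along. The isotopy of $\partial_-S_I$ is realized by an isotopy of $S$ supported near $\partial_-X_I$. We must check that it does not increase the complexities of lower sectors; I expect a collar argument of the kind used in Case~1 of Section~\ref{sbsec:proof_decrease} to handle this. The isotopy is rel $\partial X$ because the boundary of $S$ is already in general position.

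\textbf{The modification.} Form the $1$-cone-off decomposition $B=B_{j_1}\cup\cdots$ as in Section~\ref{sbsec:modification}, and thicken it to the $S$-part of $\nu=\nu(h)$ together with a slab $D^{n-|I|\dots}$ in the transverse $\Delta^I$ direction. Then isotope $S$ by pushing the handle $h$ across $\partial_-X_I$ along the $1$-cone-off, so that $S'\cap X_j$ gains $B_j$ (thickened) for $j\notin I$. The four cases then go as in Section~\ref{sbsec:proof_decrease}:
\begin{enumerate}
\item For $J=I$, one ambient $k$-handle disappears.
\item For $J\subsetneq I$, the sector is unchanged up to diffeomorphism of pairs, by radial pressing.
\item For $J\subset I^c$, Lemma~\ref{lem:handle_color}, applied to the bridge-positioned $A$, shows that $S'_J$ is obtained from $S_J$ by attaching $0$- and $1$-handles of $B_J$. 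Turned upside down, these have coindices $0$ and $1$.
\item For mixed $J$, use the same deformation-retract analysis as in Case~3 of Section~\ref{sbsec:proof_decrease}.
\end{enumerate}

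\textbf{Main obstacle.} Cases 3 and 4 produce new ambient handles of coindex $1$ in sectors $X_J$ with $|J|>|I|$. The lexicographic order tolerates those. In sectors with $|J|\le|I|$, however, a coindex-$1$ ambient handle would be fatal. Such handles can only arise when $|J|=|I|$ with $J\ne I$, and I expect to rule them out by choosing the cone-off so that only $B_1$ contains the center $\tfrac12B$. The extra coindex-$1$ ambient handles then live in sectors $J\ni j_1$ mixing with $I$, which are strictly larger than $I$.

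A second subtlety is that ambient handles of $S$ must precede handles of $X_J$ in each sector. I would arrange this by keeping $B$ disjoint from the coindex-$0,1$ handles of $X_J$, which is possible by the codimension assumption as in the remark after Definition~\ref{def:complexity_sub}.
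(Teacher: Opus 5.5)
There are two genuine gaps.

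\textbf{First gap: the choice of $I$ is reversed.} The blocks of $c(X,S)$ are ordered by increasing $|I|$, and inside each block by decreasing handle index. So the \emph{last} nonzero coordinate $r$ in the statement corresponds to an offending sector with the \emph{largest} $|I|$ (smallest $m=\dim X_I$), and within it the smallest index $t$. That choice is what makes every sector $X_{I'}$ with $I'\supsetneq I$ already bridge. In particular $\partial_-S_I$ is in bridge position in $\partial_-X_I$ for free.

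With $|I|$ minimal you are not proving the proposition as stated. You are also forced to isotope $\partial_-S_I$ into bridge position inside $\partial_-X_I$. That isotopy moves $\partial_-S_I$ across the strata $X_{I\cup\{j,j'\}}$, so any extension to $S$ changes $S\cap X_J$ for $J\subset I^c$ (e.g.\ $S_j$, where $|J|=1\le|I|$) by the trace of the isotopy. This trace can carry ambient handles of any index. The Case~1 argument of Section~\ref{sbsec:proof_decrease}, a pressing inside $X_J$ for $J\subsetneq I$, says nothing about it.

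\textbf{Second gap: the modification is modelled on the wrong situation.} The attaching sphere $A=\partial B$ of an ambient handle lies in $S$ (in $\partial_-S_I$ if $h$ comes first). So $A$ cannot be put in bridge position ``disjoint from $\partial_-S_I$''. The $1$-cone-off together with Lemma~\ref{lem:handle_color} produces coindex-$1$ pieces. These are harmless for sectors of a multisection, but coindex-$1$ ambient handles are exactly what $c(X_J,S_J)$ counts. Moreover, the cone piece $\tfrac12B$ lands in $X_{\{j_1\}}$ itself, a sector with $|J|=1\le|I|$, not in sectors mixing $j_1$ with $I$. So your proposed escape does not work.

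\textbf{The missing idea.} Because $t$ is the minimal index in $S_I$, the handle $h$ can be laid flat in the level $\{1/2\}\times\partial_-X_I$ of a collar, with $S$ a product below it. This propagates to $\nu(h)=\{1/2\}\times\tfrac12\Delta^I\times h$ over the interface $\Delta^I\times\partial_-X_I$. Then the whole core $B$ is a properly embedded disk in $Y=\partial_-X_I\setminus\nu(\partial_-S_I)$, which is multisected by Lemma~\ref{lem:bridge_complement}. Induction puts all of $B$, not just $\partial B$, in bridge position in $Y$ rel boundary.

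One then pushes $\nu(h)$ straight down through the interface to level $-1/2$, with no cone-off. For $J$ meeting $I^c$, set $J_2=J\cap I^c$. The piece $B\cap X_{I\cup J_2}$ is a union of two kinds of regions:
\begin{enumerate}
\item $\partial_-$-parallel disks, each giving exactly one new coindex-$0$ ambient handle of $S'_J$;
\item cylinders running from $\partial_-X_{I\cup J_2}$ to $\partial_-S_I$, across which $S_J$ changes only by an isotopy (replacing $R_{J,1}$ by $R_{J,2}$ inside $Q_J$).
\end{enumerate}
For $J\subsetneq I$, the change is an isotopy across the ball $D_J$.
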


\subsection{The complexity-decreasing isotopy}\label{sbsec:modification_sub}
In this section, we propose the appropriate isotopy from $S$ to $S'$ for proving Proposition~\ref{prop:modification_sub}. Fix handle decompositions on the sectors realizing the complexity $c(X_1,\cdots,X_{n-1};S)$. By assumption, for some $3\le m\le n$, $0\le t<s-n+m$, in some $m$-dimensional sector $X_I$, $S_I$ has an ambient $t$-handle, and no $m'$-dimensional sector has an ambient $t'$-handle if $t'<s-n+m'$ and $(m',t')<(m,t)$ in lexicographical order. In particular, $\partial_-S_I=S_I\cap\partial_-X_I$ is in bridge position in the multisection $\partial_-X_I$ (after the appropriate corner-smoothing). We isotope $S$ to remove one ambient $t$-handle in $S_I\subset X_I$, so that no sector $S_J\subset X_J$ with $|J|\le|I|$ gets more ambient handles with positive coindices.

To start, we fix some notation. Let $h$ be an ambient $t$-handle in $S_I\subset X_I$, $B=B(h)$ be its core, and $h\cong_\alpha D^{s-t-n+m}\times B$. Let $\nu=\nu(h)\cong_\beta D^{n-m}\times h$ be a closed neighborhood of $h$ in the codimension-$0$ submanifold $\cup_{i\in I}S_i$ in $S$. Here, the diffeomorphisms $\alpha,\beta$ are fixed once and for all.

Since there are no ambient $t'$-handles in $S_I\subset X_I$ with $t'<t$, by the description of ambient handle decompositions in \cite{gompf20234}, for example, we may assume $h\subset\{1/2\}\times\partial_-X_I$ lies flat in some level of a standard closed collar neighborhood $[0,1]\times\partial_-X_I$ of $\partial_-X_I$ in $X_I$, that $S_I$ is the product $[0,1/2)\times\partial_-S_I$ below the $1/2$-level and a product in between the $(1/2,1]$-levels, so that $h$ is the unique handle up to level $1$. This description can be propagated to a closed neighborhood of $X_I\subset\cup_{i\in I}X_i$ as follows. Let $\Delta^I\times X_I$ denote a closed neighborhood of $X_I$, with bottom boundary $\Delta^I\times\partial_-X_I$, called the \textit{interface}, and $[0,1]\times\Delta^I\times\partial_-X_I$ be a closed collar neighborhood of the interface. Then, inside $\cup_{i\in I}X_i$, $S$ is the product $[0,1/2)\times\Delta^I\times\partial_-S_I$ below the $1/2$-level and a product in between the $(1/2,1]$-levels, and $\nu(h)=\{1/2\}\times\tfrac12\Delta^I\times h$ is the unique handle of $S$ up to the level $1$, lying flat in the $1/2$-level. Here, by an abuse of notation, we think of $\partial_-S_I$, $h$, $B$ as all lying in $\partial_-X_I$.

By Lemma~\ref{lem:bridge_complement}, $Y:=\partial_-X_I\backslash\nu(\partial_-S_I)$ inherits the structure of a multisection with corners from that of $\partial_-X_I$. By a small perturbation, we may assume the attaching sphere $\partial B$ of $h$, thought of as a submanifold of $\partial Y$, is in general position with respect to the corner stratification. By the induction hypothesis, we may put $B\subset Y$ in bridge position by an isotopy rel boundary. This isotopy can be extended to $h$ and $\nu(h)$, keeping them flat in the $1/2$-level above the interface. From now on, we still think of $B$ as attached to $\partial_-S_I$ as opposed to $\partial(\nu(\partial_-S_I))$.

Finally, we extend the collar neighborhood $[0,1]\times\Delta^I\times\partial_-X_I$ of the interface to a two-sided closed collar neighborhood $P:=[-1,1]\times\Delta^I\times\partial_-X_I$ such that both the decomposition of $X$ and the submanifold $S$ are standard, namely $X_J\cap P=[-1,0]\times\Delta^I\times X_{I\cup J}$ for $\emptyset\ne J\subset I^c$, and $S\cap([-1,0]\times\Delta^I\times\partial_-X_I)=[-1,0]\times\Delta^I\times\partial_-S_I$. In particular, when viewing $S\cap P$ in $P$ from level $-1$ to level $1$, everything is standard except the $s$-dimensional ambient $t$-handle $\nu(h)$ which lies flat in the level $1/2$.

\textbf{The $(n,m,s,t)$-modification ($3\le m\le n$, $0\le t\le s-n+m-1$):}\smallskip\\
Push $\nu(h)=\{1/2\}\times\tfrac12\Delta^I\times D^{s-t-n+m}\times B\subset\{1/2\}\times\Delta^I\times\partial_-X_I$ vertically down across the interface $\Delta^I\times\partial_-X_I=\{0\}\times\Delta^I\times\partial_-X_I$, from the level $1/2$ to the level $-1/2$.

More explicitly, the $(n,m,s,t)$-modification replaces the $$(\{1/2\}\times\tfrac12\Delta^I\times D^{s-t-n+m}\times B)\cup([-1/2,1/2]\times\tfrac12\Delta^I\times D^{s-t-n+m}\times\partial B)\subset P$$ part of $S$ with $$(\{-1/2\}\times\tfrac12\Delta^I\times D^{s-t-n+m}\times B)\cup([-1/2,1/2]\times\partial(\tfrac12\Delta^I\times D^{s-t-n+m})\times B)\subset P.$$ Denote the new submanifold by $S'$.

\begin{figure}[ht]
    \centering
    \begin{subfigure}[t]{0.44\linewidth}
        \centering
        \includegraphics[width=\linewidth]{X.pdf}
        \caption{A neighborhood of $X_I$ in the multisection of $X$}
    \end{subfigure}
    \begin{subfigure}[t]{0.48\linewidth}
        \centering
        \includegraphics[width=\linewidth]{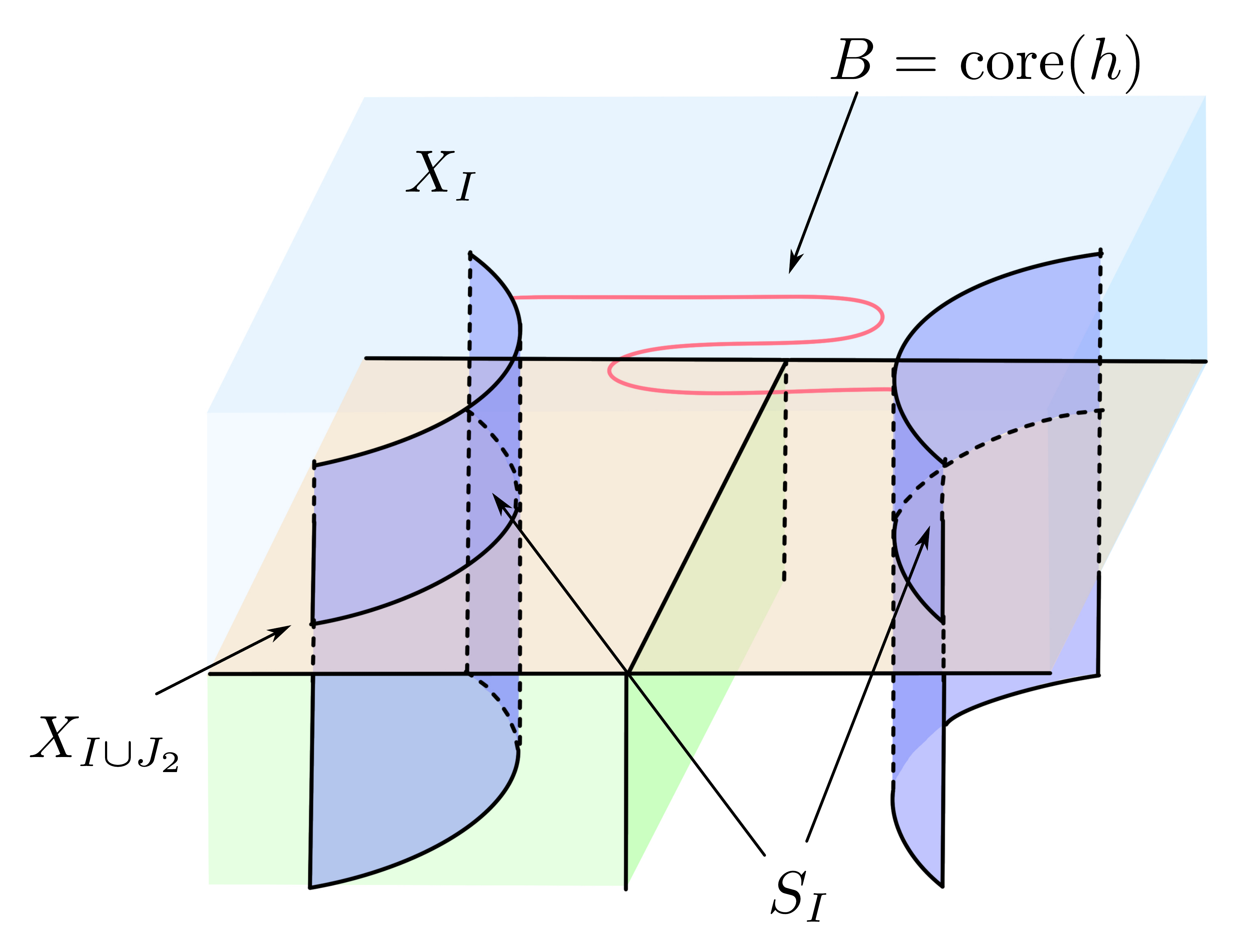}
        \caption{The sector $X_I$ and a collar neighborhood of $\partial X_I$ outside $X_I$, with the submanifold $S$ and the core of the ambient handle $h$ drawn}
    \end{subfigure}

    \caption{A schematic picture showing the relative position of the core $B$ of the ambient $t$-handle $h$ in $S_I\subset X_I$ lying at the $1/2$-level above the bottom boundary $\partial_-X_I$. The thickened handle (not drawn) $\nu\cong\tfrac12\Delta^I\times h$ is an $s$-dimensional ambient $t$-handle that shadows down onto the interface $\Delta^I\times\partial_-X_I$ via a product region $[0,1/2]\times\nu$. Here, $J_1\subset I$, $J_2\subset I^c$ are index sets as in Case 3 of the argument.}
    \label{fig:t-hd}
\end{figure}

\subsection{Proof of the complexity-decreasing property}\label{sbsec:proof_decrease_sub}
We prove that the proposed $(n,m,s,t)$-modification decreases the complexity of the decomposition. To do this, we analyze the change in the ambient handle decomposition in each sector $S_J\subset X_J$. It suffices to show that for every $|J|\le|I|$, $S_J'$ (admits an ambient handle decomposition that) has no more ambient $i$-handles than $S_J$ for $i\le\dim(S_J)-1$, and that $S_I'$ has fewer ambient $t$-handles than $S_I$.

\textbf{Case 0}: $J=I$.

By construction, after the isotopy, $S_I'$ is cylindrical in the collar $[0,1]\times\partial_-X_I$ and agrees with $S_I$ outside this region, effectively removing the $t$-handle $h$ from $S_I$. Thus, $S_I'$ has one fewer ambient $t$-handle than $S_I$ (and the same number of ambient handles of other indices).

\textbf{Case 1}: $J\subsetneq I$.

Inside $X_J$, the modification pushes the half-disk region $\tfrac12\Delta^I_J\times h$ in $S_J\subset X_J$ vertically down. One may see that $S_J'$ is isotopic to $S_J$ by an isotopy supported in a small neighborhood of the product region above the interface shadowed down from $\tfrac12\Delta^I_J\times h$, which does not intersect $\partial_+X_J$. To be explicit, we observe that $S_J$ and $S_J'$ agree outside the ball $$D_J:=[0,1/2]\times\tfrac12\Delta^I_J\times D^{s-t-n+m}\times B.$$ Both $S_J$ and $S_J'$ are disjoint from $\Int(D_J)$. Within the sphere $\partial D_J$, $S_J'$ differs from $S_J$ by replacing the ball $$B_{J,1}:=(\{1/2\}\times\tfrac12\Delta^I_J\times D^{s-t-n+m}\times B)\cup([0,1/2]\times\tfrac12\Delta^I_J\times D^{s-t-n+m}\times\partial B)$$ with the ball $$B_{J,2}:=[0,1/2]\times(\tfrac12\partial_v\Delta^I_J\times D^{s-t-n+m}\cup\tfrac12\Delta^I_J\times\partial D^{s-t-n+m})\times B,$$ where $\partial_v\Delta^I_J:=\partial\Delta^I\cap\partial\Delta^I_J$ is the external boundary of $\Delta^I_J$. The ball $D_J$ intersects the bottom boundary $\partial_-X_J$ in the ball $$B_{J,3}:=(\{0\}\times\tfrac12\Delta^I_J\cup[0,1/2]\times\tfrac12\partial_r\Delta^I_J)\times D^{s-t-n+m}\times B,$$ where $\partial_r\Delta^I_J:=\cup_{J\subsetneq K\subset I}\Delta^I_K$ is the radial/internal boundary of $\Delta^I_J$.

One may check that the decomposition $\partial D_J=B_{J,1}\cup B_{J,2}\cup B_{J,3}$ (after corner smoothing) is diffeomorphic to the standard decomposition of the $k$-sphere into three pieces for $k=s-|J|+1$ ($\ge2$). In particular, we can properly isotope (a push-in of) $B_{J,1}$ to (a push-in of) $B_{J,2}$ in $D_J$ rel $B_{J,1}\cap B_{J,2}$, giving the desired isotopy from $S_J$ to $S_J'$.

\textbf{Case 2}: $J\subset I^c$.

Write $B_J=B\cap X_{I\cup J}\subset X_{I\cup J}$ (here we view $B$ in $\partial_-X_I$). By assumption, it is a union of $\partial_-$-parallel disks together with some cylindrical region with its two ends on $\partial_-X_{I\cup J}$ and $\partial_-S_I\cap X_{I\cup J}$. The modification pushes the $\{1/2\}\times\tfrac12\Delta^I\times D^{s-t-n+m}\times B_J$ part of $\nu(h)$ across the interface into $X_J$. One may see that the cylindrical region does not change the handle decomposition of $S_J$, and that each $\partial_-$-parallel disk in $B_J$ contributes to a new coindex-$0$ ambient handle of $S_J'$. We check this as follows for the two cases individually. As a shorthand, let $B_1$ denote the disk $\tfrac12\Delta^I\times D^{s-t-n+m}$.

\textbf{2.1.} Let $B_0\subset B_J$ be a $\partial_-$-parallel disk. The corresponding part of $\nu(h)$ is $\{1/2\}\times B_1\times B_0$. After the push, locally $S_J$ changes from $\emptyset$ to $$(\{-1/2\}\times B_1\times B_0)\cup([-1/2,0]\times\partial B_1\times B_0),$$ which is parallel to the disk $(\{0\}\times B_1\times B_0)\cup([-1/2,0]\times B_1\times\partial B_0)\subset\partial_-X_J$ via the product region $[-1/2,0]\times B_1\times B_0$. Thus, the pushing introduces a new ambient handle in $S_J'$ of coindex $0$.

\textbf{2.2.} Let $[0,1]\times N\subset B_J$ be the cylindrical region, where $\partial_-([0,1]\times N):=\{0\}\times N\subset\partial_-X_{I\cup J}$ and $\partial_+([0,1]\times N):=[0,1]\times\partial N\cup\{1\}\times N\subset\partial_-S_I\cap X_{I\cup J}$. The corresponding part of $\nu(h)$ is $\{1/2\}\times B_1\times[0,1]\times N$. Locally near the region below $\{1/2\}\times B_1\times[0,1]\times N$, $S_J$ and $S_J'$ agree outside the product region $$Q_J:=[-1/2,0]\times B_1\times[0,1]\times N.$$ Both $S_J$ and $S_J'$ are disjoint from $\Int(Q_J)$. Within $\partial Q_J$, $S_J'$ differs from $S_J$ by replacing $$R_{J,1}:=[-1/2,0]\times B_1\times\partial_+([0,1]\times N)$$ with $$R_{J,2}:=(\{-1/2\}\times B_1\times[0,1]\times N)\cup([-1/2,0]\times\partial B_1\times[0,1]\times N).$$ The region $Q_J$ intersects the bottom boundary $\partial_-X_J$ in $$R_{J,3}:=(\{0\}\times B_1\times[0,1]\times N)\cup([-1/2,0]\times B_1\times\partial_-([0,1]\times N)).$$ Note that $\partial Q_J=R_{J,1}\cup R_{J,2}\cup R_{J,3}$ is a decomposition into codimension-$0$ submanifolds where the pieces have pairwise intersections in codimension-$1$ submanifolds and the triple intersection is a codimension-$2$ submanifold. It suffices to prove that (a push-in of) $R_{J,1}$ can be properly isotoped in $Q_J$ to (a push-in of) $R_{J,2}$ rel $R_{J,1}\cap R_{J,2}$, which would imply that $S_J'$ is locally obtained from $S_J$ by an isotopy. To see this, we note that the boundary sphere of the ball $B':=[-1/2,0]\times B_1\times[0,1]$ is standardly decomposed into the union of three balls $B_1':=[-1/2,0]\times B_1\times\{1\}$, $B_2':=(\{-1/2\}\times B_1\times[0,1])\cup([-1/2,0]\times\partial B_1\times[0,1])$, $B_3':=(\{0\}\times B_1\times[0,1])\cup([-1/2,0]\times B_1\times\{0\})$. We can rewrite $$Q_J=B'\times N,\ R_{J,1}=(B_1'\times N)\cup(B'\times\partial N),\ R_{J,2}=B_2'\times N,\ R_{J,3}=B_3'\times N.$$ It is then easy to observe that $R_{J,1}\cap R_{J,3}=((B_1'\cap B_3')\times N)\cup(B_3'\times\partial N)$ and $R_{J,2}\cap R_{J,3}=(B_2'\cap B_3')\times N$ are parallel in $R_{J,3}$. We may thus properly isotope $R_{J,1}$ to $R_{J,1}\cup R_{J,3}=((B_1'\cup B_3')\times N)\cup(B'\times\partial N)$ rel $R_{J,1}\cap R_{J,2}$ by dragging a part of its boundary. Then, we may properly isotope $R_{J,1}\cup R_{J,3}$ to $R_{J,2}$ in $Q_J$ rel their common boundary as they are parallel.

\textbf{Case 3}: $J=J_1\cup J_2$, where $\emptyset\ne J_1\subsetneq I$, $\emptyset\ne J_2\subset I^c$.

Locally in the interface, $X_J$ is the product $\{0\}\times\Delta^I_{J_1}\times X_{I\cup J_2}$, with $\partial_-X_J$ being $\{0\}\times(\partial_r\Delta^I_{J_1}\times X_{I\cup J_2}\cup\Delta^I_{J_1}\times\partial_-X_{I\cup J_2})$ (recall $\partial_r\Delta^I_\bullet$ and $\partial_v\Delta^I_\bullet$ are defined in Case 1). In what follows, we will omit the prefix ``$\{0\}\times$'' for submanifolds in the interface.

Write $B_{J_2}=B\cap X_{I\cup J_2}\subset X_{I\cup J_2}$. It is a union of $\partial_-$-parallel disks together with some cylindrical region with its two ends on $\partial_-X_{I\cup J_2}$ and $\partial_-S_I\cap X_{I\cup J_2}$, as in Case 2. The modification pushes the $\{1/2\}\times\tfrac12\Delta^I_{J_1}\times D^{s-t-n+m}\times B_{J_2}$ part of $\nu(h)$ across the interface, leaving the surgery of $S_J$ as the new submanifold in $X_J$. We check that the push of the cylindrical region does not change the handle decomposition of $S_J$, and that the push of each $\partial_-$-parallel disk contributes to a new coindex-$0$ ambient handle of $S_J'$, in a way analogous to Case 2.

\textbf{3.1.} Let $B_0\subset B_{J_2}$ be a $\partial_-$-parallel disk. The corresponding part of $\nu(h)$ is $\{1/2\}\times\tfrac12\Delta^I_{J_1}\times D^{s-t-n+m}\times B_0$. After the push, locally $S_J$ changes from $\emptyset$ to $$(\tfrac12\partial_v\Delta^I_{J_1}\times D^{s-t-n+m}\cup\tfrac12\Delta^I_{J_1}\times\partial D^{s-t-n+m})\times B_0,$$ which is parallel to the disk $$(\tfrac12\partial_r\Delta^I_{J_1}\times D^{s-t-n+m}\times B_0)\cup(\tfrac12\Delta^I_{J_1}\times D^{s-t-n+m}\times\partial B_0)\subset\partial_-X_J$$ via the product region $\tfrac12\Delta^I_{J_1}\times D^{s-t-n+m}\times B_0$. Thus, $S_J'\subset X_J$ locally has one more ambient handle of coindex $0$.

\textbf{3.2.} Let $[0,1]\times N\subset B_{J_2}$ be the cylindrical region, where $\partial_-([0,1]\times N):=\{0\}\times N\subset\partial_-X_{I\cup J_2}$ and $\partial_+([0,1]\times N):=([0,1]\times\partial N)\cup(\{1\}\times N)\subset\partial_-S_I\cap X_{I\cup J_2}$. The corresponding part of $\nu(h)$ is $\{1/2\}\times\tfrac12\Delta^I_{J_1}\times D^{s-t-n+m}\times[0,1]\times N$. Locally near the region on the interface below this part of the handle, $S_J$ and $S_J'$ agree outside the product region $$Q_J:=\tfrac12\Delta^I_{J_1}\times D^{s-t-n+m}\times[0,1]\times N.$$ Both $S_J$ and $S_J'$ are disjoint from $\Int(Q_J)$. Within $\partial Q_J$, $S_J'$ differs from $S_J$ by replacing $$R_{J,1}:=\tfrac12\Delta^I_{J_1}\times D^{s-t-n+m}\times\partial_+([0,1]\times N)$$ with $$R_{J,2}:=(\tfrac12\partial_v\Delta^I_{J_1}\times D^{s-t-n+m}\cup\tfrac12\Delta^I_{J_1}\times\partial D^{s-t-n+m})\times[0,1]\times N.$$ The region $Q_J$ intersects the bottom boundary $\partial_-X_J$ in $$R_{J,3}:=(\tfrac12\partial_r\Delta^I_{J_1}\times D^{s-t-n+m}\times[0,1]\times N)\cup(\tfrac12\Delta^I_{J_1}\times D^{s-t-n+m}\times\partial_-([0,1]\times N)).$$ We conclude that $R_{J,1}$ is properly isotopic to $R_{J,2}$ in $Q_J$ rel $R_{J,1}\cap R_{J,2}$ in exactly the same way as in \textbf{2.2}.

This finishes the induction step, proving Proposition~\ref{prop:modification_sub}, and hence Theorem~\ref{thm:bridge_general} and, in particular, Theorem~\ref{thm:bridge}.\qed

\subsection{PL case}
Since we work with handle decompositions, the results in this paper are also true in the PL category. Indeed, PL manifolds and pairs admit triangulations (in which the link of each vertex is a sphere), and one can construct a handle decomposition from a triangulation. The goal of this subsection is to check the existence of the precise handle decompositions we use to define the complexity in Section~\ref{sbsec:complexity_sub}: given a properly embedded submanifold $S$ in a $1$-handlebody $X$, there is a handle decomposition rel boundary with first ambient handles for $S$, and then handles of $X$ of coindices $0$ and $1$.

In this section, all manifolds and submanifolds are PL.

We work with a pair $(X,S)$ where $X$ is an $n$-dimensional manifold with boundary and $S$ is an $s$-dimensional properly embedded submanifold of $X$. We consider handle decompositions rel boundary with two types of handles:
\begin{itemize}
 \item standard $k$-handles of $X$: $D^k\times D^{n-k}$ glued along $S^{k-1}\times D^{n-k}$,
 \item ambient $k$-handles for $S$: $(D^k\times D^{s-k}\times \Delta^{n-s},D^k\times D^{s-k}\times\{*\})$ with $*\in\Int(\Delta^{n-s})$, glued along $\big((S^{k-1}\times D^{s-k}\times\Delta^{n-s})\cup(D^k\times D^{s-k}\times\partial_-\Delta^{n-s}),S^{k-1}\times D^{s-k}\times *\big)$, where $\Delta^n$ denotes the {\em standard $n$-dimensional half-disk}, that is an $n$-disk whose boundary is decomposed into two disks $\partial_-\Delta^n$
and $\partial_+\Delta^n$ which intersect precisely along their boundary; see Figure~\ref{fig:ambienthandle}. 
We define the {\em attaching disk} of such a handle as the $k$-disk $(D^k\times\{p\}\times\{0\})\cup(S^{k-1}\times\{p\}\times[0,1])$, where $p\in\Int(D^{s-k})$ and the interval $[0,1]$ is embedded in $\Delta^{n-s}$ so that $0$ is identified with $[0,1]\cap\partial\Delta^{n-s}\subset\partial_-\Delta^{n-s}$ and $1$ is identified with $*\in\Int(\Delta^{n-s})$.
\end{itemize}

\begin{figure}[htb]
\begin{center}
\begin{tikzpicture} [scale=1.2]
 \draw[red!60,fill=red!60,opacity=0.8] (2.5,0.6) -- (3.5,0.6) -- (3.5,1.6) -- (2.5,1.6) -- (2.5,0.6);
 \draw (3.5,0.6) -- (3.5,1.6) -- (2.5,1.6);
 \draw[red!60,fill=red!60,opacity=0.8] (2.5,0.6) -- (3.5,0.6) -- (1,0) -- (0,0) -- (2.5,0.6);
 \draw[dashed] (2.5,1.6) -- (2.5,0.6) -- (3.5,0.6);
 \draw[blue,thick] (3,1.1) -- (3,0.6) -- (0.5,0);
 \draw[green!60,fill=green!60,opacity=0.8] (2.5,1.1) -- (3.5,1.1) -- (1,0.5) -- (0,0.5) -- (2.5,1.1);
 \draw[dashed] (0,0) -- (2.5,0.6);
 \draw[red!60,fill=red!60,opacity=0.8] (0,0) -- (1,0) -- (1,1) -- (0,1) -- (0,0);
 \draw[blue,thick] (0.5,0) -- (0.5,0.5);
 \draw (0,1) -- (2.5,1.6) (1,1) -- (3.5,1.6) (3.5,0.6) -- (1,0) -- (1,1) -- (0,1) -- (0,0) -- (1,0);
 \draw[red!60] (4.5,0.8) node {gluing locus};
 \draw[green!60] (3.8,1.2) node {$S$};
 \draw[blue] (4,0.2) node {attaching disk};
 \draw[<->] (1.05,-0.1) -- (3.55,0.5);
 \draw (2.3,-0.1) node {$D^k$};
 \draw[<->] (0,-0.1) -- (1,-0.1);
 \draw (0.5,-0.3) node {$D^{s-k}$};
 \draw[<->] (-0.1,0) -- (-0.1,1);
 \draw (-0.6,0.5) node {$\Delta^{n-s}$};
\end{tikzpicture}
\end{center}
\caption{An ambient handle. Here, $n=3$, $s=2$, and $k=1$.}
\label{fig:ambienthandle}
\end{figure}

\begin{Prop} \label{prop:handledec}
 Every pair $(X,S)$ admits a handle decomposition where the handles are ordered by increasing indices, and, for a given index, the standard handles are glued before the ambient handles. Further, handles of the same type and index can be glued simultaneously.
\end{Prop}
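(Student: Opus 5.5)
Take a triangulation $K$ of $X$ in which $S$ is a full subcomplex, with $\partial X$ and $\partial S$ subcomplexes as well. This is available after PL approximation followed by a second barycentric subdivision. We then build the decomposition from derived neighbourhoods, in the spirit of the classical construction of a handle decomposition from a triangulation. Pass to the first barycentric subdivision $K'$. For each simplex $\sigma$ of $K$ with $\sigma\not\subset\partial X$, let $D(\sigma)$ be its dual cell in $K'$, which is the union of the simplices of $K'$ spanned by barycenters $\hat\tau$ with $\tau\supseteq\sigma$. Thickening these dual cells gives the handles. The dual cell of a $k$-simplex has dimension $n-k$, so turned upside down relative to $\partial X$ it is a handle of index $n-k$ rel boundary. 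Alternatively, we keep the absolute ordering by index $k=\dim\sigma$ and attach first to a collar of $\partial X$; the indexing convention can be fixed at the end.

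Concretely, let $N_k$ be the union of the collar $\partial X\times[0,1]$ with the simplicial neighbourhoods in $K''$ of the barycenters $\hat\sigma$ for all interior simplices $\sigma$ of $K$ with $\dim\sigma\le k$. The standard argument shows that $N_k$ is obtained from $N_{k-1}$ by attaching, simultaneously and disjointly, one $k$-handle $h_\sigma\cong\sigma^\circ\times D(\sigma)$ for each interior $k$-simplex $\sigma$. Here we use that links of simplices in a PL manifold are spheres or balls, which gives the product structure.

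Next we sort the $k$-simplices into two classes. If $\sigma\not\subset S$, then fullness of $S$ gives $h_\sigma\cap S=\emptyset$ up to the attaching region, and $h_\sigma$ is a standard $k$-handle of $X$. If $\sigma\subset S$, then $h_\sigma\cap S$ is the $k$-handle of $S$ dual to $\sigma$ in $S$. The transverse link of $\sigma$ in $(K,S)$ is an unknotted pair (sphere, sphere) or (ball, ball) in the normal direction, and this pair is standard because it is a pair of links of a PL submanifold, which is locally flat in codimension $\ge3$. We must also handle codimension $\le 2$; there we assume local flatness, which is part of the PL submanifold hypothesis. Hence $h_\sigma$ has the form $(D^k\times D^{s-k}\times\Delta^{n-s},D^k\times D^{s-k}\times\{*\})$. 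The half-disk factor appears because part of the normal link of $\sigma$ lies in lower-dimensional stars that have already been absorbed into $N_{k-1}$, and this accounts for the portion $\partial_-\Delta^{n-s}$ of the gluing region.

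Getting the order "standard before ambient" within each index $k$ requires a small refinement. Given two simplices of the same dimension, neither is a face of the other, so the neighbourhoods of their barycenters in $K''$ are disjoint. Therefore all $k$-handles can be attached simultaneously, and in particular in any order: first all $\sigma\not\subset S$, then all $\sigma\subset S$. The simultaneity claim follows from the same disjointness.

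The main obstacle is verifying the local model for an ambient handle, namely that the pair $(h_\sigma,h_\sigma\cap S)$ together with its gluing region is PL homeomorphic to the stated half-disk model. This reduces to a statement about the normal link pair of $\sigma$ in $(X,S)$. The key fact is that the link of $\sigma$ in $S$ is a full subcomplex whose regular neighbourhood in the link of $\sigma$ in $X$ is a product with a disk, by regular neighbourhood theory and local flatness. The part of that neighbourhood already lying in $N_{k-1}$ is exactly a collar on one side of the disk, which gives $\partial_-\Delta^{n-s}$. I would check this carefully using the join structure $\mathrm{st}(\sigma)=\sigma*\mathrm{lk}(\sigma)$.
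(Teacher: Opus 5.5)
Your framework is the paper's: a triangulation with $S$ a subcomplex, handles given by the closed stars in $K''$ of the barycenters of simplices not in $\partial X$, standard handles for $\sigma\not\subset S$, and simultaneous gluing because same-dimensional simplices have disjoint stars.

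\textbf{The gap.} You claim that for an interior $k$-simplex $\sigma\subset S$ the handle $h_\sigma$ is itself an ambient handle, with the $\partial_-\Delta^{n-s}$ part of its gluing region supplied by stars already in $N_{k-1}$. This is false.
\begin{itemize}
\item The star of $\hat\sigma$ in $K''$ meets the star of $\hat\rho$ only if $\rho$ and $\sigma$ are comparable.
\item The normal link $\mathrm{lk}(\sigma,K)$ records the simplices $\tau>\sigma$, whose stars are attached \emph{later}, not earlier.
\item In the join picture $\overline{\mathrm{st}}(\hat\sigma,K'')\cong\hat\sigma*\bigl((\partial\sigma)'*\mathrm{lk}(\sigma,K)'\bigr)$, the attaching region $h_\sigma\cap N_{k-1}$ is the part over $(\partial\sigma)'$, i.e.\ $S^{k-1}\times D^{n-k}$. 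Nothing in the normal directions is absorbed into $N_{k-1}$.
\end{itemize}
For $k=0$ there are no lower stars at all. A vertex of $S\setminus\partial S$ gives a handle with empty attaching region, which cannot be an ambient $0$-handle, since such a handle is glued along $\partial_-\Delta^n\neq\emptyset$.

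So $(h_\sigma,h_\sigma\cap S)\cong(D^k\times D^{s-k}\times D^{n-s},D^k\times D^{s-k}\times\{*\})$, glued along $S^{k-1}\times D^{s-k}\times D^{n-s}$ only. This is a $k$-handle of the pair, and it changes $X$ by a $k$-handle. An ambient handle, by contrast, is glued along an $(n-1)$-disk and leaves $X$ unchanged. If you declared these handles ambient, your standard handles would compute $\chi(X,\partial X)-\chi(S,\partial S)$ instead of $\chi(X,\partial X)$.

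\textbf{The missing step} is the splitting used in the paper.
\begin{itemize}
\item Write the normal factor as $D^{n-s}=\Delta^{n-s}_-\cup\Delta^{n-s}_+$, glued along $\partial_+\Delta^{n-s}_-=\partial_-\Delta^{n-s}_+$, with $*\in\Int(\Delta^{n-s}_+)$.
\item Cut $h_\sigma$ into a standard $k$-handle $D^k\times D^{s-k}\times\Delta^{n-s}_-$, glued along $S^{k-1}\times D^{s-k}\times\Delta^{n-s}_-$ and disjoint from $S$.
\item Follow it by the ambient $k$-handle $D^k\times D^{s-k}\times\Delta^{n-s}_+$. The $D^k\times D^{s-k}\times\partial_-\Delta^{n-s}_+$ part of its gluing region lies on the standard half just attached, not on $N_{k-1}$.
\end{itemize}
Then, at each index $k$, you first glue all standard $k$-handles simultaneously: those with $\sigma\not\subset S$ together with the $\Delta_-$-halves of those with $\sigma\subset S$. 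Afterwards you glue all ambient $k$-handles simultaneously. Your disjointness argument justifies both the ordering and the simultaneity.

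Your assumption of local flatness, needed to make the link pair standard, is fine; the paper's argument uses it implicitly as well.
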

\begin{proof}
 Take a triangulation $\TT$ of $X$ that induces triangulations of $S$, $\partial X$, and $\partial S$. Let $\TT_2$ be the second barycentric subdivision of $\TT$. This provides a handle decomposition of $X$ as follows (see Figure~\ref{fig:triangulation}):
 \begin{itemize}
  \item the simplices that meet the boundary and their faces form a collar neighborhood of the pair $(\partial X,\partial S)$,
  \item the $k$-handles are the stars in $\TT_2$ of the centers of the $k$-simplices of $\TT$ not lying in $\partial X$.
 \end{itemize}
\begin{figure}[htb]
\begin{center}
\begin{tikzpicture} [xscale=0.87,scale=0.4,rotate=90]
\foreach \s in {-1,1} {
\foreach \t in {-1,1} {
\begin{scope} [xscale=\s,yscale=\t]
 \draw (0,12) -- (0,0);
 \draw (6,0) -- (0,4) -- (3,6);
 \draw (0,0) -- (3,2) -- (3,6) -- (0,8) (0,2) -- (6,0) -- (1.5,5) -- (0,12) (3,0) -- (0,4) -- (4.5,3) (0,4) -- (1.5,9);
 \draw[orange,fill=orange,opacity=0.5] (3,0) -- (6,0) -- (4.5,3) -- (3,3.33) -- (3,2) -- (2,1.33) -- (3,0) (0,12) -- (0,8) -- (1,7.33) -- (1.5,9) -- (0,12) (1.5,9) -- (1,7.33) -- (1.5,5) -- (3,3.33) -- (4.5,3) -- (1.5,9);
 \draw[purple,fill=purple,opacity=0.5] (0,0) -- (3,0) -- (2,1.33) -- (0,2) -- (0,0);
 \draw[blue,fill=blue,opacity=0.5] (0,2) -- (2,1.33) -- (3,2) -- (3,3.33) -- (1.5,5) -- (1,7.33) -- (0,8) -- (0,2);
 \draw[line width=2pt,green] (0,0) -- (6,0);
 \draw[thick] (0,12) -- (6,0);
 \foreach \x/\y in {0/0,0/4}
 \draw (\x,\y) node {$\bullet$};
\end{scope}}}
 \draw[green] (6,0) node[above right] {$S$};
 \draw[orange] (2,-18) node {collar of $(\partial X,\partial S)$};
 \draw[purple] (0,-20) node {1-handle of the pair $(X,S)$};
 \draw[blue] (-2,-17) node {2-handles of $X$};
\end{tikzpicture} \caption{From a triangulation to a handle decomposition.} \label{fig:triangulation}
\end{center}
\end{figure}
 This is the usual way to get a handle decomposition from a triangulation (see \cite[\S 6.8]{RS}). Note that the handles can be glued in order of increasing index. In this decomposition, some handles are disjoint from $S$, so that they are standard handles of~$X$, and the others contain a handle of $S$ of the same index. In the latter case, we further divide the handle into a standard handle of $X$ and an ambient handle for $S$. More precisely, let $D^k\times D^{s-k}\times D^{n-s}$ be a $k$-handle with gluing locus $S^{k-1}\times D^{s-k}\times D^{n-s}$, and containing a handle of $S$ given as $D^k\times D^{s-k}\times \{*\}$. Writing $D^{n-s}$ as the union of $\Delta_-^{n-s}$ and $\Delta_+^{n-s}$ glued along $\partial_+\Delta_-^{n-s}=\partial_-\Delta_+^{n-s}$, with $*\in\Int(\Delta_+^{n-s})$, we cut this handle into:
 \begin{itemize}
  \item first, a standard $k$-handle $D^k\times D^{s-k}\times \Delta_-^{n-s}$ with gluing locus $S^{k-1}\times D^{s-k}\times \Delta_-^{n-s}$,
  \item second, an ambient $k$-handle $D^k\times D^{s-k}\times \Delta_+^{n-s}$ with gluing locus $(S^{k-1}\times D^{s-k}\times \Delta_+^{n-s})\cup (D^k\times D^{s-k}\times \partial_-\Delta_+^{n-s})$, containing the $k$-handle of $S$.
 \end{itemize}
 See Figure~\ref{fig:handlepair}.
\end{proof}

\begin{figure}[htb]
\begin{center}
\begin{tikzpicture} [scale=1.2]
\begin{scope}
 \draw[red!60,fill=red!60,opacity=0.8] (2.5,0.6) -- (3.5,0.6) -- (3.5,1.6) -- (2.5,1.6) -- (2.5,0.6);
 \draw (3.5,0.6) -- (3.5,1.6) -- (2.5,1.6);
 \draw[dashed] (2.5,1.6) -- (2.5,0.6) -- (3.5,0.6);
 \draw[green!60,fill=green!60,opacity=0.8] (2.5,1.1) -- (3.5,1.1) -- (1,0.5) -- (0,0.5) -- (2.5,1.1);
 \draw[dashed] (0,0) -- (2.5,0.6);
 \draw[red!60,fill=red!60,opacity=0.8] (0,0) -- (1,0) -- (1,1) -- (0,1) -- (0,0);
 \draw (0,1) -- (2.5,1.6) (1,1) -- (3.5,1.6) (3.5,0.6) -- (1,0) -- (1,1) -- (0,1) -- (0,0) -- (1,0);
\end{scope}
 \draw[->,thick] (4,0.8) -- (5.5,0.8);
\begin{scope} [xshift=6cm]
\begin{scope} [yshift=0.3cm]
 \draw[red!60,fill=red!60,opacity=0.8] (2.5,0.9) -- (3.5,0.9) -- (3.5,1.6) -- (2.5,1.6) -- (2.5,0.9);
 \draw (3.5,0.9) -- (3.5,1.6) -- (2.5,1.6);
 \draw[red!60,fill=red!60,opacity=0.8] (2.5,0.9) -- (3.5,0.9) -- (1,0.3) -- (0,0.3) -- (2.5,0.9);
 \draw[dashed] (2.5,1.6) -- (2.5,0.9) -- (3.5,0.9);
 \draw[green!60,fill=green!60,opacity=0.8] (2.5,1.1) -- (3.5,1.1) -- (1,0.5) -- (0,0.5) -- (2.5,1.1);
 \draw[dashed] (0,0.3) -- (2.5,0.9);
 \draw[red!60,fill=red!60,opacity=0.8] (0,0.3) -- (1,0.3) -- (1,1) -- (0,1) -- (0,0.3);
 \draw (0,1) -- (2.5,1.6) (1,1) -- (3.5,1.6) (3.5,0.9) -- (1,0.3) -- (1,1) -- (0,1) -- (0,0.3) -- (1,0.3);
\end{scope}
\begin{scope} 
 \draw[red!60,fill=red!60,opacity=0.8] (2.5,0.6) -- (3.5,0.6) -- (3.5,0.9) -- (2.5,0.9) -- (2.5,0.6);
 \draw (3.5,0.6) -- (3.5,0.9) -- (2.5,0.9);
 \draw[dashed] (2.5,0.9) -- (2.5,0.6) -- (3.5,0.6);
 \draw[dashed] (0,0) -- (2.5,0.6);
 \draw[red!60,fill=red!60,opacity=0.8] (0,0) -- (1,0) -- (1,0.3) -- (0,0.3) -- (0,0);
 \draw (0,0.3) -- (2.5,0.9) (1,0.3) -- (3.5,0.9) (3.5,0.6) -- (1,0) -- (1,0.3) -- (0,0.3) -- (0,0) -- (1,0);
\end{scope}
\end{scope}
\end{tikzpicture}
\end{center}
\caption{Turning a handle pair into a standard handle followed by an ambient handle.}
\label{fig:handlepair}
\end{figure}

\begin{Prop}
 Let $X$ be an $n$-dimensional $1$-handlebody. Let $S\subset X$ be a properly embedded submanifold of dimension $s\leq n-2$. Then the pair $(X,S)$ admits a handle decomposition rel boundary with first ambient handles for $S$, and then handles of $X$ of coindices $0$ and $1$.
\end{Prop}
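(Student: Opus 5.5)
We plan to start from Proposition~\ref{prop:handledec} applied to the pair $(X,S)$ rel boundary, and then rearrange the handles. The decomposition consists of standard handles of $X$ and ambient handles for $S$, ordered by increasing index, with standard handles of a given index glued before the ambient ones of that index. Because $s\le n-2$, an ambient $k$-handle has $k\le s\le n-2$. Standard handles of coindex $\ge2$, i.e.\ of index $\le n-2$, may also occur, and our task is to get rid of them using the hypothesis that $X$ is a $1$-handlebody.

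The first step is to push every ambient handle below every standard handle. A standard $j$-handle of $X$ has attaching sphere of dimension $j-1$ and cocore of dimension $n-j$. An ambient $k$-handle for $S$ is attached along a region whose relevant part is a $(k-1)$-sphere (its attaching disk), lying in the level $\partial$ of the cobordism built so far. To slide an ambient handle below a standard $j$-handle, we need its attaching data to miss the belt sphere of that $j$-handle, which has dimension $n-j-1$. By general position this works when $(k-1)+(n-j-1)<n-1$, that is, when $k<j+1$. Since the original ordering only puts ambient $k$-handles after standard $j$-handles with $j\le k$, the problematic case is $j=k$ (or $j<k$ with $k\ge j+1$).

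To avoid this dimension count, we would instead use that the whole of $S$, being of codimension $\ge 2$, can be isotoped rel $\partial S$ to miss the cocores of all standard handles. The cocores form a union of submanifolds of dimension $n-j\le n$, and general position for codimension-$j$ cocores against the $s$-dimensional $S$ fails only when $s\ge j$. We therefore prefer the following cleaner route.

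The cleaner route is to fix a rel-boundary handle decomposition of $X$ with only coindex $0,1$ handles, which exists since $X$ is a $1$-handlebody. The cocores of these handles have dimension $\le1$, so since $s\le n-2$, general position lets us isotope $S$ rel $\partial S$ off them. Equivalently, $S$ then lies in a collar $[0,1]\times\partial X$ minus nothing essential: shrinking the handles onto their cores (of dimension $n$ and $n-1$ from the top, i.e.\ dual cocores), the complement of the union of handles deformation retracts into the collar of $\partial X$. Applying an ambient isotopy that compresses $S$ into the collar $[0,1]\times\partial X$, we have $(X,S)=([0,1]\times\partial X, S)\cup(\text{coindex }0,1\text{ handles})$.

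Next we handle the collar pair. We apply Proposition~\ref{prop:handledec} to the pair $C:=([0,1]\times\partial X,S)$ relative to $\{0\}\times\partial X$. We need a decomposition of $C$ rel the bottom consisting only of ambient handles. Here we use that $C$ is a product, so that no standard handles of $X$ are needed: take a Morse function on $S$ rel $\partial S$, extend it to the collar coordinate so that $S$ is moved by an isotopy to a position where the height function $[0,1]$ restricted to $S$ is Morse. Each critical point of index $k$ then gives an ambient $k$-handle, in the sense of \cite[\S6.2]{gompf20234} and its PL analogue built from the half-disk model above, while the regular levels are products of $\partial X$ with the level sets. In the PL category, we would run the same argument with a triangulation of $C$ compatible with the product structure, and repeat the subdivision trick of Proposition~\ref{prop:handledec} to split off standard handles, which in a product can be cancelled.

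The main obstacle is this last point in the PL setting: showing that the collar pair decomposes using only ambient handles, which amounts to a PL version of ``a generic height function restricted to $S$ gives ambient handles''. To address it, we would try first to isotope $S$ so that each handle of $S$ from a triangulation is attached in successive thin collar layers, checking that each layer is an ambient handle of the half-disk form.
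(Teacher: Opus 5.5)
Your reduction to the collar is sound, also in PL: a $1$-dimensional spine $\Gamma$ of $X$ misses $S$ by general position since $s+1<n$, and the complement of a small regular neighborhood of $\Gamma$ is a collar $\partial X\times[0,1]$. In the smooth category your argument is then essentially complete, because a generic height function on $S$ in the collar yields ambient handles as in \cite[\S6.2]{gompf20234}. This is the reasoning the paper has in mind when it notes that the complexities in Definition~\ref{def:complexity_sub} are finite.

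However, this proposition lives in the PL subsection. There all manifolds are PL, and the point is to obtain such decompositions without smooth Morse theory. In that setting your proof has a gap exactly at the step you flag: you never show that $(\partial X\times[0,1],S)$ decomposes rel the bottom into ambient handles only. Levelling the handles of some decomposition of $S$ layer by layer is not automatic, because whether a handle of $S$ can be realized as an ambient handle depends on the embedding, not on $S$ alone. For a knotted arc $S$ in $X=B^3$ (so $n=3$, $s=1$), the decomposition of $S$ rel $\partial S$ as a collar plus one $1$-handle can never be levelled: an arc made of a collar of $\partial S$ and a single ambient $1$-handle is $\partial$-parallel. The handle structure of $S$ has to be adapted to how $S$ sits in $X$. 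In PL, the available tool for this is the triangulation construction of Proposition~\ref{prop:handledec}, and it unavoidably interleaves standard handles of the ambient manifold with the ambient handles.

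So the missing idea is the one hidden in your phrase ``which in a product can be cancelled''. You must show that the standard handles can be changed by the moves coming from uniqueness of handle decompositions (isotopies, slides, births and deaths of cancelling pairs) without destroying the ambient handles. This is exactly what the paper verifies:
\begin{enumerate}
\item A slide is performed along an arc in a level, which generically misses $S$ since $\mathrm{codim}_XS\ge2$.
\item For a cancelling pair $h_k,h_{k+1}$ with $k\le n-2$, the belt sphere of $h_k$ has dimension $n-k-1\ge1$. The attaching disks of the ambient $k$-handles are $k$-dimensional in the $(n-1)$-dimensional level, so they meet the belt sphere in finitely many points. Hence the attaching sphere of $h_{k+1}$ can be isotoped to hit the belt sphere once, away from them.
\end{enumerate}
With this in hand, the detour through the spine and the collar becomes unnecessary. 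Apply Proposition~\ref{prop:handledec} to $(X,S)$ directly and trade the standard handles for a decomposition of $X$ with handles of index $n-1$ and $n$ only. Since handles are ordered by increasing index and every ambient handle has index at most $s\le n-2$, all ambient handles then precede the standard ones.
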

\begin{proof}
 We start with a handle decomposition $\HH$ of $(X,S)$ as provided by Proposition~\ref{prop:handledec}. If we forget $S$, then $\HH$ gives a standard handle decomposition of $X$. Now $X$ also admits a handle decomposition $\HH'$ with only handles of coindices $0,1$, which we can also assume to be ordered by increasing index. By uniqueness of handle decompositions, one can go from $\HH$ to $\HH'$ by a certain number of the following moves: sliding a handle over another handle of the same index, adding/removing a pair of handles in canceling position, isotoping the attaching spheres of the handles. We need to check that the ambient handles do not prevent us from performing these moves. For isotopies and additions of pairs of handles, it is straightforward. When sliding a $k$-handle over another, the sliding is defined along an arc living in the $(n-1)$-dimensional level where the $k$-handles are glued. The arc being $1$-dimensional, it generically avoids $S$, which has codimension at least~$2$. Finally, consider a $k$-handle $h_k$ and a $(k+1)$-handle $h_{k+1}$ in canceling position (if we only look at the handle decomposition of $X$), with $k\leq n-2$. The belt sphere of $h_k$ has dimension $n-k-1\geq1$. In the positive boundary of $h_k$, this belt sphere generically meets the attaching $k$-disks of the ambient $k$-handles along points, so that it is not covered by these ambient handles. Hence, up to isotoping the attaching sphere of $h_{k+1}$, we can assume that it meets the belt sphere of $h_k$ in a single point away from the ambient $k$-handles. It follows that the cancellation can be performed.
\end{proof}

\section{Trisecting \texorpdfstring{$5$}{5}-manifolds with boundary}\label{sec:trisection_5}
The purpose of this section is two-fold. First, we use our proof strategy of Theorem~\ref{thm:multisection} to give an alternative proof of the main theorem of Lambert-Cole--Miller \cite{lambert2021trisections} concerning trisecting $5$-manifolds with boundary. Second, we use this case as a low-dimensional example to make our proof idea of Theorem~\ref{thm:multisection} more concrete.

The notion of ``trisection'' in this section is not in the sense of Definition~\ref{def:multisection}, but rather in the spirit of the notion of multisections studied by Rubinstein--Tillmann \cite{rubinstein2020multisections}; see Section~\ref{sbsec:generalization} for a discussion of various generalizations of multisections, and in particular Example~\ref{ex:rubinstein_tillmann}(1) for the case relevant to the discussion here.

For the purpose of this section, a \textit{trisection} of a compact $5$-manifold $M$ with boundary is a decomposition $M=M_1\cup M_2\cup M_3$ into $5$-manifolds with corners, such that if we put $M_{ij}=M_i\cap M_j$, $M_{123}=M_1\cap M_2\cap M_3$, and $\partial_{bot}(\bullet)=\partial(\bullet)\cap\partial M$, then
\begin{enumerate}
\item $\partial M_1$ has codimension-$0$ stratum $\Int(M_{12})\cup\Int(M_{13})\cup\Int(\partial_{bot}M_1)$, codimension-$1$ stratum $\Int(M_{123})\cup\Int(\partial_{bot}M_{12})\cup\Int(\partial_{bot}M_{13})$, and codimension-$2$ stratum $\partial_{bot}M_{123}$. Similarly for $\partial M_2$ and $\partial M_3$.
\item After smoothing the corners, each $M_i$ can be built out of $0,1$-handles, each $M_{ij}$ can be built out of $0,1,2$-handles, and $M_{123}$ can be built out of $0,1,2,3$-handles.
\item $\partial M=\partial_{bot}M_1\cup\partial_{bot}M_2\cup\partial_{bot}M_3$ is a trisection in the sense of Definition~\ref{def:multisection}.
\end{enumerate}
See Figure~\ref{fig:trisection_5} for an illustration.
\begin{figure}
\begin{tikzpicture}[
    scale=1.1,
    front/.style={thick},
    back/.style={thick,densely dotted},
    radial/.style={densely dotted},
    leader/.style={thin},
]

% Parameters
\def\a{2.2}   % horizontal radius of bottom ellipse
\def\b{0.65}  % vertical radius of bottom ellipse

% Main coordinates
\coordinate (C) at (0,0);
\coordinate (S) at (0,3.7);
\coordinate (L) at (-\a,0);
\coordinate (R) at (\a,0);

% Three equally spaced boundary points on the bottom ellipse
\coordinate (P1) at ({\a*cos(100)},{\b*sin(100)});
\coordinate (P2) at ({\a*cos(220)},{\b*sin(220)});
\coordinate (P3) at ({\a*cos(340)},{\b*sin(340)});

% Fill for the half-ellipsoid
\fill[gray!10]
    (L) .. controls (-2.05,1.9) and (-1.15,3.65) .. (S)
        .. controls (1.15,3.65) and (2.05,1.9) .. (R)
    arc[start angle=0,end angle=-180,x radius=\a,y radius=\b]
    -- cycle;

% Back half of the bottom ellipse
\draw[back] (R) arc[start angle=0,end angle=180,x radius=\a,y radius=\b];

% Dome outline
\draw[front] (L) .. controls (-2.05,1.9) and (-1.15,3.65) .. (S);
\draw[front] (S) .. controls (1.15,3.65) and (2.05,1.9) .. (R);

% Front half of the bottom ellipse
\draw[front] (R) arc[start angle=0,end angle=-180,x radius=\a,y radius=\b];

% Dotted radial lines in the bottom disk
\draw[radial] (C) -- (P1);
\draw[radial] (C) -- (P2);
\draw[radial] (C) -- (P3);

% Dotted axis from summit to center of bottom disk
\draw[back] (C) -- (S);

% Curved surface lines from boundary points to summit
\draw[back]  (P1) .. controls (-0.35,1.55) and (-0.15,3.00) .. (S);
\draw[front] (P2) .. controls (-1.65,1.20) and (-0.95,3.00) .. (S);
\draw[front] (P3) .. controls ( 1.65,1.20) and ( 1.25,3.20) .. (S);

% Labels
\node[left]  at (-2.4,2.2) {$M_1$};
\draw[leader] (-2.3,2.2) -- (-1.3,2.0);
\node[right] at (2.4,2.2) {$M_2$};
\draw[leader] (2.3,2.2) -- (1.5,2.0);
\node[below] at (0.25,2) {$M_3$};
\node at (-1,0.2) {\small$\partial_{bot}M_1$};
\node at (1,0.2) {\small$\partial_{bot}M_2$};
\node at (0,-0.35) {\small$\partial_{bot}M_3$};
\end{tikzpicture}
\caption{A trisection of a $5$-manifold with boundary.}
\label{fig:trisection_5}
\end{figure}

The following is a slight generalization of Lambert-Cole--Miller \cite[Theorem~1.4]{lambert2021trisections}, removing the hypotheses on the orientation and the number of boundary components.
\begin{Thm}
Every trisection of the boundary of a smooth compact $5$-manifold can be extended to a trisection of the $5$-manifold.
\end{Thm}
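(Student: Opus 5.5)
We follow the pre-multisection strategy of Section~\ref{sec:multisection}, now relative to the boundary. Given a trisection $\partial M=Y_1\cup Y_2\cup Y_3$, we first build a \emph{pre-trisection} of $M$ extending it. Take a collar $[0,1]\times\partial M$ and set $M_i\supset[0,1]\times Y_i$. Put the complement $\overline{M\setminus[0,1]\times\partial M}$ into $M_1$, with $M_{12}=\{1\}\times Y_{12}$ and so on. This decomposition satisfies conditions (1) and (3), but not (2). We then define a complexity as in Section~\ref{sbsec:complexity}: for each sector we count the handles of the \emph{wrong} indices in a handle decomposition built relative to the part of the boundary not lying in $\partial M$ (i.e.\ upside down, relative to $\partial_-$). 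These are indices $\ge2$ for $M_i$, indices $\ge3$ for $M_{ij}$, and the $4$-handles of $M_{123}$. We order these counts lexicographically, with the lower-dimensional sectors counted last, exactly as in Definition of $c(X_1,\dots,X_{n-1})$. Throughout, the bottom boundary $\partial_{bot}$ stays fixed, so all handle decompositions are taken rel $\partial_{bot}$ as well.

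The inductive step is the $(5,m,k)$-modification of Section~\ref{sbsec:modification}. Pick a sector $M_I$ of minimal dimension $m$ and a handle $h$ of minimal bad index $k$ in it. We may assume $h$ is attached away from $\partial M$ by isotoping it off the collar region, using that its attaching sphere has codimension $\ge2$ in $\partial_- M_I$. The relevant boundary $\partial_-M_I$ is, after corner smoothing, a decomposition of a manifold with boundary whose lower-dimensional sectors are already good. Its boundary part lies in $\partial M$ and carries the given multisection structure. Hence $\partial_-M_I$ is a multisection with corners in the sense of Definition~\ref{def:multisection_general}, with $\partial_+$ the part in $\partial M$. Theorem~\ref{thm:bridge_general} then lets us isotope the attaching sphere $S(h)$ rel boundary into bridge position; here $S(h)$ is closed and disjoint from $\partial_+$. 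Next we form the $1$-cone-off decomposition of the core, thicken to $\nu$, and set $M_i'=M_i\cup\nu_i$ for $i\notin I$ and $M_i'=\overline{M_i\setminus\nu}$ for $i\in I$. Cases 0--3 of Section~\ref{sbsec:proof_decrease} go through verbatim, since everything happens in a neighborhood of $M_I$ in the interior of $M$. In particular, Lemma~\ref{lem:handle_color} still gives only added coindex-$0,1$ handles. The count of bad handles therefore strictly decreases, and after finitely many steps we obtain a trisection.

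The main obstacle is making the relative bookkeeping correct. First, complexity zero must mean exactly condition (2); that is, the ``upside down'' decomposition rel $\partial_-M_I$ with only coindex $0,1$ handles must be equivalent to being built from low-index handles. This requires the sectors to be $1$-handlebody-like in the right relative sense. For instance, $M_{123}$ must be a $3$-manifold built from $0$--$3$ handles, which is automatic for a $3$-manifold with nonempty boundary in each component. Second, we must check that the hypotheses of Theorem~\ref{thm:bridge_general} hold, namely that $\partial_-M_I$ really is a multisection with corners, including near $\partial M$ where the general-position condition on the corner stratification enters. I would handle these first, by matching the present notion of trisection with Definition~\ref{def:multisection_general} applied to $M$ viewed with $\partial_+=\partial M$. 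Since the index shift between dimension $5$ and three sectors differs from the $(n-1)$-section setting, I would also verify carefully which indices count as bad in each sector.
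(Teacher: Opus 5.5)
There is a genuine gap, at exactly the step that carries the content. Your overall architecture (extend the boundary trisection arbitrarily, count bad handles lexicographically, remove them by interior cone-off modifications) is the same as the paper's. The problem is how you position the attaching sphere $S(h)$: you assert that $\partial_-M_I$ is a multisection with corners in the sense of Definition~\ref{def:multisection_general} and invoke Theorem~\ref{thm:bridge_general}. This is false.

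\begin{itemize}
\item For $M_I=M_3$, $\partial_-M_3=M_{13}\cup M_{23}$ is a $4$-manifold cut into \emph{two} pieces. Each piece is a $2$-handlebody (rel $M_{123}$ they genuinely carry $2$-handles), and they meet in the $3$-manifold $M_{123}$. An $(n-1)$-section of a $4$-manifold needs three pieces, sectors with only coindex-$0,1$ handles, and a surface as deepest stratum.
\item For $M_I=M_{ij}$, $\partial_-M_{ij}=M_{123}$ is a single piece.
\end{itemize}

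More globally, a trisection of a $5$-manifold has three pieces rather than four, with $2$-handlebody double intersections. So no ``matching'' with Definition~\ref{def:multisection_general} is possible, and Theorem~\ref{thm:bridge_general} gives you nothing.

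Your claim that Cases 0--3 go through verbatim also fails. For a $3$-handle, $S\cap M_{123}$ is a union of circles and $S\cap M_{23}$ is a planar surface. Hence $B_{12}$ is a planar surface, and $M_{12}'=M_{12}\cup h_{12}$ acquires a $2$-handle, contrary to the conclusion of Lemma~\ref{lem:handle_color}. This is harmless only because $M_{ij}$ is allowed to be a $2$-handlebody, and that must be checked, not quoted. Your list of bad indices is also in the right-side-up convention, despite your saying ``upside down''. Moreover, $M_{123}$ carries no constraint at all, and a $3$-manifold has no $4$-handles.

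What replaces bridge position in the paper is a direct low-dimensional argument, and its ingredients are absent from your proposal.

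\begin{itemize}
\item The paper maintains two invariants throughout: every component of $M_i$ and of $M_{ij}$ meets $\partial_{bot}$, and every component of $M_{ij}$ meets $M_{123}$.
\item Before each removal of a $2$- or $3$-handle of some $M_i$, it first removes all rel-boundary $1$-handles of every $M_{ij}$. Their attaching points are pushed into $M_{123}$, and the thickened handle is given to the third sector.
\item Combined with the fact, coming from the boundary trisection, that $\partial_{bot}M_{13}$ is built from $\partial_{bot}M_{123}$ by $2,3$-handles, this shows that $M_{13}$ is built from $M_{123}$ by $2,3,4$-handles.
\item The attaching circle or $2$-sphere of a $2$- or $3$-handle of $M_3$ can then be pushed off $\partial_{bot}M_3$, which is a $1$-handlebody. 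By general position it can also be pushed off the $3,4$-handles of $M_{13}$ rel $M_{123}$.
\item A circle is then pushed into a collar of $M_{123}$ and cut into one arc in each of $M_{13}$ and $M_{23}$.
\item A sphere meets $M_{13}$ in parallel copies of $2$-handle cores and meets $M_{23}$ in a planar surface. The intersection with $M_{13}$ is made nonempty by adding a cancelling $2$--$3$ pair, so that $h_1$ is attached to $M_1$ along a star-shaped graph rather than forming a new component.
\end{itemize}

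This weak position (disks on the preferred side $M_{13}$) is all the $1$-cone-off needs. In this setting it has to be established by hand rather than obtained from Theorem~\ref{thm:bridge_general}.
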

The idea is to extend the trisection arbitrarily to a decomposition of the $5$-manifold into $3$ pieces, then perform modifications to remove handles in the sectors with undesired indices. Instead of mimicking closely the proof of Theorem~\ref{thm:multisection}, we make use of some low-dimensional features to modify some of the steps, allowing a simpler modification procedure.

\begin{proof}
Let $M$ be a compact $5$-manifold whose boundary comes with a trisection $\partial M=\partial_1M\cup\partial_2M\cup\partial_3M$. We construct a trisection $M=M_1\cup M_2\cup M_3$ so that $\partial_{bot}M_i=\partial_iM$.

Without loss of generality, assume $M$ is connected. By deleting a $5$-ball and equipping its boundary $4$-sphere with the standard genus-$0$ trisection if necessary, we may reduce to the case when $M$ has nonempty boundary.

We first extend the boundary trisection arbitrarily to a decomposition $M=M_1\cup M_2\cup M_3$ satisfying the genericity condition (1) of a trisection. We may pick this decomposition so that in addition,
\begin{enumerate}[(i)]
\item every component of $M_i$ meets $\partial_{bot}M_i$ nontrivially and every component of $M_{ij}$ meets $\partial_{bot}M_{ij}$ nontrivially;
\item every component of $M_{ij}$ meets $M_{123}$ nontrivially.
\end{enumerate}
For example, one can take $M_1,M_2$ to be standard closed collar neighborhoods of $\partial_1M,\partial_2M$, and $M_3$ to be the closure of $M\backslash(M_1\cup M_2)$. The conditions (i)(ii) will be preserved throughout the modification procedure.

From now on, we turn all sectors upside down, and talk about handle decompositions of them rel boundary; so, the indices of the handles are reversed. By assumption (i), each $M_i$ has handles with possible indices $1,2,3,4,5$, and each $M_{ij}$ has handles with possible indices $1,2,3,4$. We need to remove $1,2,3$-handles in each $M_i$ and $1$-handles in each $M_{ij}$.

\textbf{1. $1$-handles in $M_{23}$} (similarly $M_{12},M_{13}$): Let $h\subset M_{23}$ be a ($4$-dimensional) $1$-handle. Its attaching region is the tubular neighborhood of two points on $\partial M_{23}=M_{123}\cup_{\partial_{bot}M_{123}}\partial_{bot}M_{23}$. Since $\partial M$ is trisected, we know $\partial_{bot}M_{23}$ has no closed component. Thus, we may isotope the attaching region of $h$ to lie completely in $M_{123}$. Let $\nu(h)\subset M_2\cup M_3$ be a closed tubular neighborhood of the handle $h$. Now, the interior modification $$M_1':=M_1\cup\nu(h),\ M_2':=\overline{M_2\backslash\nu(h)},\ M_3':=\overline{M_3\backslash\nu(h)}$$ removes the $1$-handle $h$ from $M_{23}$, adds a new $4$-handle (the upside-down copy of $\nu(h)$) to $M_1$, adds a new $3$-handle to each of $M_{12}$ and $M_{13}$, does not affect the handle decompositions of $M_2$ and $M_3$, and preserves the conditions (i)(ii) for the decomposition.

\textbf{2. $1$-handles in $M_3$} (similarly $M_1,M_2$): Let $h\subset M_3$ be a $1$-handle. Its attaching region is the tubular neighborhood of two points on $\partial M_3=M_{13}\cup M_{23}\cup\partial_{bot}M_3$. As before, we may isotope the attaching region of $h$ to lie in $M_{13}\cup M_{23}$. By assumption (ii), we may further isotope the attaching region $D$ to the tubular neighborhood of two points in $\Int(M_{123})$. The attaching region $D$ admits a decomposition $D=D_1\cup D_2$ induced by $M_{13}\cup M_{23}$, where each $D_i$ is the disjoint union of two half $4$-balls. We extend this decomposition to a decomposition $h=h_1\cup h_2$ of $h$ into two half $1$-handles that intersect each other in a $4$-dimensional $1$-handle $h_{12}$. Now, the interior modification 
\begin{equation}\label{eq:interior_modification}
M_1':=M_1\cup h_1,\ M_2':=M_2\cup h_2,\ M_3':=\overline{M_3\backslash h}
\end{equation}
removes the $1$-handle $h$ from $M_3$, adds a $4$-handle to each of $M_1$ and $M_2$, and preserves the conditions (i)(ii) for the decomposition.

Before each iteration of the following two modifications, we perform the first modification to remove $1$-handles in all $M_{ij}$.

\textbf{3. $2$-handles in $M_3$} (similarly $M_1,M_2$): Let $h\subset M_3$ be a $2$-handle. Its attaching region is the tubular neighborhood of a circle in $\partial M_3=M_{13}\cup M_{23}\cup\partial_{bot}M_3$. Since $\partial_{bot}M_3$ is a $1$-handlebody, we may push the attaching region of $h$ off $\partial_{bot}M_3$. Since $\partial_{bot}M_{13}$ is obtained from $\partial_{bot}M_{123}$ by attaching $2,3$-handles and $M_{13}$ is obtained from $\partial M_{13}=M_{123}\cup\partial_{bot}M_{13}$ by attaching $2,3,4$-handles, we see that $M_{13}$ (and similarly $M_{23}$) is obtained from $M_{123}$ by attaching $4$-dimensional $2,3,4$-handles. Therefore, we may isotope the attaching region of $h$ to lie in a collar neighborhood of $M_{123}$ in $M_{13}\cup M_{23}$. For dimension reasons, we may then assume that the attaching circle $S$ of $h$ is the union of two arcs $S_1,S_2$ in $M_{13},M_{23}$, respectively. We extend this decomposition $S=S_1\cup S_2$ to a decomposition $B=B_1\cup B_2$ of the core disk $B$ of $h$ into two half disks, and further extend this, by a thickening, to a decomposition $h=h_1\cup h_2$ of the whole handle into two half $2$-handles that intersect each other in a $4$-dimensional $1$-handle $h_{12}$. Now, the interior modification \eqref{eq:interior_modification} removes the $2$-handle $h$ from $M_3$, does not affect the handle decomposition of $M_1$ and $M_2$ (as the half $2$-handles can be deformation retracted into $M_1$ and $M_2$), and preserves the conditions (i)(ii).

\textbf{4. $3$-handles in $M_3$} (similarly $M_1,M_2$): Let $h\subset M_3$ be a $3$-handle. Its attaching region is the tubular neighborhood of a $2$-sphere $S$ in $\partial M_3=M_{13}\cup M_{23}\cup\partial_{bot}M_3$. As before, we may isotope $S$ to lie in $M_{13}\cup M_{23}$. Since $M_{13}$ is obtained from $M_{123}$ by attaching $2,3,4$-handles, we may assume that $S\cap M_{13}$ misses the $3,4$-handles and is a nonempty union of cores of $2$-handles in $M_{13}$ rel $M_{123}$. Here, the nonempty assumption can be achieved by adding canceling $2,3$-handles in $M_{13}$ rel $M_{123}$, if necessary. Thus, the decomposition $M_{13}\cup M_{23}$ induces a decomposition $S=S_1\cup S_2$ where $S_1$ is a nonempty disjoint union of disks and $S_2$ is a planar surface with nonempty boundary. We extend this decomposition to a decomposition $B=B_1\cup B_2$ of the core $3$-ball of $h$ by defining $B_2$ to be a closed collar neighborhood of $S_2$ in $B$ and $B_1=\overline{B\backslash B_2}$ (this is the \textit{$1$-cone-off decomposition} used in the proof of Theorem~\ref{thm:multisection}; see Section~\ref{sbsec:modification}), and further extend this, by a thickening, to a decomposition $h=h_1\cup h_2$ of the whole handle $h$. By construction, $h_1$ is the tubular neighborhood of a star-shaped graph in $M_3$ with the leaves attached to $M_{13}$, and $h_2$ is a collar neighborhood of $h\cap M_{23}$ in $M_3$. Now, the interior modification \eqref{eq:interior_modification} removes the $3$-handle $h$ from $M_3$, adds some number of $4$-handles to $M_1$, does not change the handle decomposition of $M_2$, and preserves the conditions (i)(ii).

The interior modifications \textbf{1}, \textbf{2}, \textbf{3}, and \textbf{4} can be applied iteratively to change $M_1\cup M_2\cup M_3$ to a trisection of $M$ extending the given trisection on $\partial M$.
\end{proof}

\section{Remarks and future directions} \label{sec:future}
\subsection{Uniqueness of multisections and bridge positions}
In \cite{aribi2023multisections}, the authors introduced stabilization moves for multisected manifolds and asked whether two multisections of a given smooth manifold are isotopic after stabilizations. This is known to hold in dimensions $3$ (Reidemeister--Singer) and $4$ (\cite{gay2016trisecting}). The number of moves grows with the dimension: for an $n$-manifold, there are $2^{n-2}-1$ stabilization moves. We recall briefly how this is defined and compare with the $(n,m,k)$-modifications that are used in the proof of Theorem~\ref{thm:multisection}.

We say that a half-disk $\Delta$ in a manifold with boundary $X$ is \emph{standard} if it is transverse to $\del X$ and $\Delta\cap \del X=\del_- \Delta$. Note that $\del_+\Delta$ is then a boundary parallel disk. Let $I\subset\{1,\dots,n-1\}$, $I\neq \emptyset, \{1,\dots,n-1\}$ and consider a standard half-disk $\Delta \subset X_I$ of codimension $1$ such that for all $I\subset J\subsetneq \{1,\dots,n-1\}$, $\Delta\cap X_J$ is also a standard half-disk in $X_J$. Such a half-disk is unique up to a multisection-preserving isotopy if $X$ is connected. 
It can be interpreted as a canceling pair of handles of coindices $2$ and $1$ in $X_I$ (relative to $\del X_I$). The coindex-$2$ handle has core $\del_+\Delta$ and we may apply the $(n,m,m-2)$-modification from Section~\ref{sec:multisection} (with $m=\dim X_I=n-|I|+1$) to add this handle to the opposite sectors. This is valid since $\del_-\Delta$ is in (a very simple form of) bridge position in $\del X_I$ and the coindex-$1$ handle left in $X_I$ is allowed in a multisection (the genus of $X_I$ increases by one). The cone-off procedure used to define the $(n,m,m-2)$-modification takes a very simple form in this case: we just transport the decomposition of $\del_- \Delta\subset \del X_I$ to $\del_+ \Delta$ via a trivialization $\Delta\cong \del_-\Delta \times [0,1]$ (with some corners smoothed). This is equivalent to our cone-off process with any choice of preferred index in $I^c$ (in our setup above $I=\{m-1,\dots,n-1\}$ and we choose the preferred index $1$ to decompose the core disk in the ``$1$-cone-off decomposition''). In addition to the considerations in Section~\ref{sbsec:proof_decrease}, one may also check that sectors $X_J$ with $J\supsetneq I$ change by adding some handles of coindex $1$. Yet another possibility is to add the core of this handle to a single opposite sector (this is how it is written in \cite{aribi2023multisections}). These variations all result in isotopic multisections, which we call the \emph{$I$-stabilization} of the given multisection. The stabilization can be done in an arbitrarily small neighborhood of a point of the central surface $X_{\{1,\cdots,n-1\}}$. Another observation from \cite{aribi2023multisections} is that $I$-stabilization is equivalent to $I^c$-stabilization, so this leaves indeed $2^{n-2}-1$ genuinely different stabilization moves.

\begin{Que}
Given two multisections of a closed manifold $X$, do they become isotopic after some number of stabilization moves?    
\end{Que}

We have established the existence of multisections in any dimension in this paper jointly with the existence of bridge positions for submanifolds
of codimension at least $2$. It is tempting to follow the same pattern when approaching the uniqueness problem. We thus now give a brief discussion
of perturbation moves for submanifolds in bridge position in a multisected manifold and formulate a similar uniqueness question.

First define a $d$-dimensional quarter of a disk to be $Q=\{(x_1,\dots,x_d)\in \R^d, x_1^2+\dots+x_d^2\leq 1, x_1\geq 0, x_2 \geq 0\}$ and denote the subsets $\del^v Q=\{x_1=0\}$, $\del_- Q=\{x_2=0\}$ and $\del_+ Q=\{x_1^2+\dots+x_d^2=1\}$. Given a manifold with boundary $X$ and a properly embedded submanifold $S$, we say that a quarter of a disk $Q\subset X$ is \emph{standard} relative to $S$ if it is transverse to $\del X$ and satisfies $Q\cap \del X=\del_- Q$, $Q\cap S=\del_+ Q$. Let $X$ be a multisected $n$-manifold and $S\subset X$ an $s$-dimensional submanifold with $s\leq n-2$. We now describe the $I$-perturbation of $S$ for a fixed $\emptyset\ne I\subsetneq \{1,\dots,n-1\}$.
We pick an $(n-s-1)$-codimensional quarter of a disk $Q\subset X_I$ such that for any $I\subset J\subsetneq\{1,\dots,n-1\}$, $Q\cap X_J$ is standard in $X_J$ relative to $S_J$.
This can be understood as a canceling pair of ambient handles for $S_I$ of coindices $0$ and $1$; see Figure~\ref{fig:perturbation}. The product region shadowed by the core of the handle of coindex $1$ is the half-disk $\del^v Q$ and we can push $S$ along this half-disk as in the $(n,m,s,t)$-modification from Section~\ref{sec:bridge} with $t=s-n+m-1$. This is valid since $\del^v Q\cap \del_- Q$ is (in a very simple form) in bridge position in $\del X_I\backslash\nu(\partial S_I)$ and the coindex $0$ ambient handle left in $S_I$ is allowed for a bridge position. The submanifold $S'$ obtained from this pushing procedure is called the \emph{$I$-perturbation} of $S$.

\begin{figure}[htb]
\begin{center}
\begin{tikzpicture} [yscale=0.5]
\begin{scope}
 \draw (1.6,-1) -- (2,1);
 \draw[white,fill=green!30,opacity=0.8] (2,2) -- (0,0) -- (5,0) -- (7,2);
 \draw[blue,fill=blue!50] (2,1) .. controls +(0.2,1) and +(-0.5,0) .. (3,3.5) .. controls +(0.5,0) and +(-0.5,0) .. (4,2.5) -- (4,1);
 \draw (2,1) .. controls +(0.2,1) and +(-0.5,0) .. (3,3.5) .. controls +(0.5,0) and +(-0.5,0) .. (4,2.5) .. controls +(0.5,0) and +(-1,-1) .. (6,5);
 \draw[blue] (3,2) node {$Q$};
 \draw[blue] (3,4) node {$\partial_+Q$};
 \draw[blue] (3,0.5) node {$\partial_-Q$};
 \draw[blue] (4.5,1.6) node {$\partial^vQ$};
\end{scope}
 \draw[->,thick] (7.5,1) -- (8.5,1);
\begin{scope} [xshift=9cm]
 \draw (1.6,-1) -- (2,1);
 \draw (3.9,1) .. controls +(0,-1) and +(0,-1) .. (4.1,1);
 \draw[white,fill=green!30,opacity=0.8] (2,2) -- (0,0) -- (5,0) -- (7,2);
 \draw (2,1) .. controls +(0.2,1) and +(-0.5,0) .. (3,3.5) .. controls +(0.5,0) and +(-0.45,0) .. (3.8,2.5) .. controls +(0.1,0) and +(0,1) .. (3.9,1) (4.1,1) .. controls +(0,1) and +(-0.1,0) .. (4.2,2.5) .. controls +(0.45,0) and +(-1,-1) .. (6,5);
\end{scope}
\end{tikzpicture} \caption{An $I$-perturbation. Here, $n=3$, $s=1$ and $|I|=1$.} \label{fig:perturbation}
\end{center}
\end{figure}

If $S$ is connected and of codimension $2$, the $I$-perturbation appears to be independent of choices up to bridge isotopy and equivalent to the $I^c$-perturbation (similarly as in \cite{aribi2023multisections}). So, we obtain $2^{n-2}-1$ different perturbation moves. For $n=3$, there is a single perturbation move and it is the standard one for links in Heegaard splittings. For $n=4$, the three perturbation moves have been introduced by Meier and Zupan in \cite{meier2017bridge}.

\begin{Que}
Let $S_0, S_1$ be closed submanifolds of codimension at least $2$ in bridge position in a closed multisected manifold $X$.  If $S_0$ and $S_1$ are isotopic, are they bridge isotopic after a certain number of perturbation moves?
\end{Que}
This was established for bridge surfaces in $S^4$ by Meier and Zupan \cite{meier2017bridge} and extended to arbitrary trisected $4$-manifolds by Hughes--Kim--Miller \cite{HKM2020}. This result on its own is potentially a step towards the uniqueness of quadrisections in dimension $5$, which is unknown at present.

\subsection{Generalizations of multisections}\label{sbsec:generalization}
Multisections provide a compact way to decompose a closed manifold into ``simple'' pieces, such that the intersections of the simple pieces are also ``simple.'' The purpose of this section is to make this description precise and discuss natural generalizations.

\subsubsection{Definitions and motivating questions}
\begin{Def}\label{def:generalized_multisections}
Let $n,k\ge0$ be integers. Let $\vec a=(a_1,\cdots,a_k)\in(\Z_{\ge0}\cup\{-\infty\})^k$ be a $k$-tuple. An \textit{$\vec a$-multisection} of a closed $n$-manifold $X$ is a decomposition $X=X_1\cup\cdots\cup X_k$, so that if we set $X_I:=\cap_{i\in I}X_i$ for $\emptyset\ne I\subset\{1,\cdots,k\}$, the following conditions hold:
\begin{enumerate}[(1)]
\item Each $X_I\subset X$ is a submanifold with corners, whose codimension-$\ell$ stratum is
$$\bigcup_{J\supset I,|J|-|I|=\ell}\Int(X_J).$$
\item After smoothing the corners, $X_I$ is a compact manifold with boundary built out of handles with indices at most $a_{|I|}$.
\end{enumerate}
\end{Def}
Thus, if $k=n-1$ and $\vec a=(1,\cdots,1,2)$, an $\vec a$-multisection is the same as a multisection in the sense of Definition~\ref{def:multisection}. In general, it is understood that $a_{|I|}=-\infty$ means that $X_I=\emptyset$.

We would like to understand when an $\vec a$-multisection exists for $n$-manifolds. For convenience, we make the following definition.

\begin{Def}
A $k$-tuple $\vec a\in(\Z_{\ge0}\cup\{-\infty\})^k$ is (smoothly) \textit{$n$-admissible} if every closed $n$-manifold admits an $\vec a$-multisection.
\end{Def}

Thus, our main theorem, Theorem~\ref{thm:multisection}, says that $(1,\cdots,1,2)\in(\Z_{\ge0}\cup\{-\infty\})^{n-1}$ is $n$-admissible.

\begin{Rmk}\label{rmk:var_adm_tuples}
If one considers PL manifolds, we obtain the notion of \textit{PL $n$-admissible tuples}. If one considers ($G$-)homology manifolds, we obtain the notion of \textit{($G$-)(co)homologically $n$-admissible tuples}, where the condition on the highest index of handles should be replaced by the highest nonvanishing degree of (co)homology. Since there are no topological handle decompositions in dimension $4$, we do not see an appropriate notion of topologically $n$-admissible tuples for $n\ge4$.
\end{Rmk}

Define a partial order $\le$ on the set of tuples $T:=\sqcup_{k\ge0}(\Z_{\ge0}\cup\{-\infty\})^k$ by requiring $(a_1,\cdots,a_k)\le(b_1,\cdots,b_\ell)$ if and only if $k\le\ell$ and $a_i\le b_i$ for $i\le k$. Then, if $\vec a\le\vec b$ and $\vec a$ is $n$-admissible, then so is~$\vec b$. An $n$-admissible tuple is \textit{minimal} if it is a minimal element in the subset $T_n\subset T$ of $n$-admissible tuples, with respect to the restriction of the partial order $\le$.

\begin{Que}\label{que:adm_tuples}
Classify all (smoothly, PL, (co)homologically) $n$-admissible tuples. Equivalently, write down the set of all minimal $n$-admissible tuples.
\end{Que}
Since we are mostly concerned with smooth manifolds, we simply write ``admissible tuples'' for ``smoothly admissible tuples.'' In this case, we shall see that the set of minimal $n$-admissible tuples is finite.

One can restrict to the subclass of nondecreasing $k$-tuples, namely tuples $(a_1,\cdots,a_k)$ with $a_1\le a_2\le\cdots\le a_k$. An $n$-admissible tuple is \textit{tight} if it is minimal and nondecreasing. We will see in Section~\ref{sbsbsec:adm_tuples_construction} that, in a certain sense, interesting admissible tuples turn out to be the tight ones. The special role of tight admissible tuples can also be seen in the obstruction theorem in Section~\ref{sbsbsec:adm_tuples_obstruction}. Finally, it appears that tight admissible tuples are much more tractable than minimal ones; see Section~\ref{sbsbsec:ex_lowdim}.

The following is a subquestion of Question~\ref{que:adm_tuples}.

\begin{Que}\label{que:tight_adm_tuples}
Write down the set of all tight $n$-admissible tuples.
\end{Que}

We end this section with an observation.

\begin{Obs}\label{obs:adm_tuples}
For a $k$-tuple $\vec a$, let $\vec a^{-,0}$ be the $k$-tuple defined as follows:
\begin{itemize}
\item if $i>n+1$ or if $a_j=-\infty$ for some $j\le i$, then $a^{-,0}_i=-\infty$;
\item otherwise, $a^{-,0}_i=\min(a_i,n-i+1)$.
\end{itemize}
Let $\vec a^-$ be the $k$-tuple defined as follows:
\begin{itemize}
\item if $a^{-,0}_i<n-i+1$ or if $i$ is the maximal index with $a^{-,0}_i=n-i+1$, then $a^-_i=a^{-,0}_i$;
\item otherwise, $a^-_i=a^{-,0}_i-1=n-i$.
\end{itemize}

We claim that $\vec a$ is $n$-admissible if and only if $\vec a^-$ is. Since $\vec a^-\le\vec a^{-,0}\le\vec a$, one direction is immediate. For the other direction, observe, for an $\vec a$-multisection $X=X_1\cup\cdots\cup X_k$, that
\begin{enumerate}[(i)]
\item $a_j=-\infty$ means that all $j$-fold intersections of an $\vec a$-multisection are empty, which implies all $i$-fold intersections are empty for $i\ge j$;
\item by condition (1) in Definition~\ref{def:generalized_multisections}, every $X_I$ is an $(n-|I|+1)$-manifold with corners, so it can be built out of handles of indices at most $n-|I|+1$.
\end{enumerate}
Thus every $\vec a$-multisection can be modified into an $\vec a^{-,0}$-multisection. Further,
\begin{enumerate}[(i)]
\setcounter{enumi}{2}
\item if there is a closed component $M$ of $X_I$ and $j>|I|$ with $a^{-,0}_j=n-j+1$, we may pick a ball $B\subset M$, subdivide it into $\ell=j-|I|$ pieces $B=B_1\cup\cdots\cup B_\ell$ in the standard way (obtained by thickening the standard decomposition of the $(\ell-1)$-simplex into $\ell$ pieces). Then we reassign a neighborhood $\nu(B)$ of $B$ to sectors $X_{j_1},\cdots,X_{j_\ell}$ for some distinct $j_1,\cdots,j_\ell\not\in I$ according to the decomposition of $B$. The new decomposition is still an $\vec a^{-,0}$-multisection, but the closed component $M\subset X_I$ is effectively punctured.
\end{enumerate}
Thus every $\vec a^{-,0}$-multisection can be modified into an $\vec a^-$-multisection.
\end{Obs}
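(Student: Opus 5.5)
The direction ``$\vec a^-$ admissible $\Rightarrow$ $\vec a$ admissible'' should be immediate from the monotonicity of admissibility under $\le$. Indeed, $\vec a^-\le\vec a^{-,0}\le\vec a$ holds coordinatewise: $-\infty$ is the minimum, $\min(a_i,n-i+1)\le a_i$, and $a^-_i\le a^{-,0}_i$. For the converse, I would start from an arbitrary $\vec a$-multisection $X=X_1\cup\cdots\cup X_k$ and modify it in two stages, first into an $\vec a^{-,0}$-multisection and then into an $\vec a^-$-multisection.

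For the first stage, no change to the decomposition is needed; I only reinterpret it. If $a_j=-\infty$, then every $j$-fold intersection is empty, so every $X_I$ with $|I|\ge j$ is empty, since it is contained in a $j$-fold intersection. If $i>n+1$, condition (1) forces $X_I$ to have negative dimension, hence to be empty. For the remaining $I$, $X_I$ is a compact $(n-|I|+1)$-manifold with corners. After smoothing, it has a handle decomposition, necessarily with indices at most its dimension, and it also satisfies $a_{|I|}$ by hypothesis. So the same decomposition is already an $\vec a^{-,0}$-multisection.

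For the second stage, let $j_0$ be the maximal index with $a^{-,0}_{j_0}=n-j_0+1$. The goal is to lower the bound from $n-i+1$ to $n-i$ at each $i<j_0$ with $a^{-,0}_i=n-i+1$. The key fact I would use is that a compact connected manifold with corners with nonempty boundary, after smoothing, admits a handle decomposition with no top-dimensional handles, and that handles of index below the top can be kept when needed. Hence it suffices to arrange that no $X_I$ with $|I|<j_0$ has a closed component. For a closed component $M\subset X_I$ with $|I|<j_0$, I would puncture it via (iii) with $j=j_0$ and $\ell=j_0-|I|\ge1$. Choose a small ball $B\subset\Int M$ away from all other strata, and split it into $\ell$ pieces by thickening the standard decomposition of $\Delta^{\ell-1}$. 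Then reassign a product neighborhood $\nu(B)\cong B\times\Delta^I$ to new sectors $X_{j_1},\dots,X_{j_\ell}$ with $j_r\notin I$; enough indices exist because $j_0\le k$. One can process these closed components one at a time, in increasing order of $|I|$.

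The main obstacle is checking that this local surgery produces a genuine decomposition with corners in which every changed sector obeys $\vec a^{-,0}$; this is a bookkeeping argument in the local model $B\times\Delta^I$. I would argue as follows.
\begin{enumerate}[(a)]
\item $X_I$ becomes $M\setminus\Int B$, which is still fine and now has nonempty boundary. Sectors $X_{I'}$ with $I'\subsetneq I$ change by removal of a region of the form $\nu\cap X_{I'}$, which is absorbed into a collar, as in Case~1 of Section~\ref{sbsec:proof_decrease}, so their diffeomorphism type is unchanged.
\item The new sectors of the form $X_{I''\cup J'}$ with $I''\subset I$, $\emptyset\ne J'\subset\{j_1,\dots,j_\ell\}$ are balls, i.e.\ a single $0$-handle, which is allowed whenever the corresponding $a_{|I''\cup J'|}\neq-\infty$. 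This holds because $|I''\cup J'|\le j_0$ and $a^{-,0}_{j_0}\neq-\infty$.
\item There is one exception. The deepest new intersection, of size exactly $j_0$, is the intersection of the central stratum of $B$ with $\partial\nu$ in the $\Delta^I$-direction, and this can be a closed manifold (a sphere) of dimension $n-j_0+1$. That is exactly why $j_0$ must satisfy $a^{-,0}_{j_0}=n-j_0+1$, and it explains the choice of $j_0$ as the maximal such index.
\end{enumerate}
Since closed components are only ever created in size $j_0$, iterating this surgery removes all closed components of sectors of size less than $j_0$. After smoothing, those sectors then have handle decompositions with top index at most $n-|I|$, and the result is an $\vec a^-$-multisection.
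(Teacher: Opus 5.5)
Your proposal is correct and follows the paper's argument. By (i)--(ii), the given $\vec a$-multisection is already an $\vec a^{-,0}$-multisection. Closed components of sectors below level $j_0$ are then punctured by reassigning a subdivided product neighborhood of a ball to $j_0-|I|$ outside sectors, and your choice of the maximal $j_0$ simply removes the need to iterate.

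One small correction to (c): the new closed $(n-j_0+1)$-sphere $X'_I\cap X'_{j_1}\cap\cdots\cap X'_{j_\ell}$ is the boundary of the central stratum of $B$. It lies on the part of $\partial\nu(B)$ in the $B$-direction, over the cone point of $\Delta^I$, not in the $\Delta^I$-direction.
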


\subsubsection{Constructions}\label{sbsbsec:adm_tuples_construction}
In this subsection, we present various natural constructions of admissible tuples. As we will show in Proposition~\ref{prop:minimal}, most of our examples turn out to be minimal admissible tuples.

\begin{Ex}\label{ex:n}
For any $n\ge0$, $(n)$ is a tight $n$-admissible $1$-tuple.
\end{Ex}
\begin{Ex}\label{ex:bisect}
For any $n>0$, $(\lfloor n/2\rfloor,n-1)$ is a tight $n$-admissible $2$-tuple. Admissibility can be seen by choosing a handle decomposition and dividing it into two halves, and minimality is an easy exercise.
\end{Ex}
\begin{Ex}\label{ex:0000}
For any $n\ge0$, by Moussard \cite[Lemma~3.1]{moussard2025multisections}, $(0,\cdots,0)$ is an $n$-admissible $(n+1)$-tuple. We shall see that this admissible tuple is tight. Consequently, it is the unique tight $n$-admissible tuple with length greater than $n$.
\end{Ex}
\begin{Ex}\label{ex:multisection}
By Theorem~\ref{thm:multisection}, for $n\ge2$, $(1,\cdots,1,2)$ is an $n$-admissible $(n-1)$-tuple; we shall see that it is tight.
\end{Ex}
\begin{Ex}\label{ex:111}
Suppose $X=X_1\cup\cdots\cup X_{n-1}$ is a multisection. We take a closed $n$-ball $X_n':=B$ near a point on the multisection surface $X_{\{1,\cdots,n-1\}}$. Let $X_i':=X_i\backslash\Int(B)$ for $i<n$. Then $X=X_1'\cup\cdots\cup X_n'$ is a $(1,\cdots,1)$-multisection of $X$, proving that $(1,\cdots,1)$ is an $n$-admissible $n$-tuple. We shall see that it is tight.
\end{Ex}
\begin{Ex}\label{ex:rubinstein_tillmann}
Rubinstein--Tillmann \cite{rubinstein2020multisections} introduced a different notion of multisections, and showed that every closed PL manifold admits a PL multisection in their sense. In our terminology, their main result \cite[Theorem~1.3]{rubinstein2020multisections} amounts to the following:
\begin{enumerate}
\item If $n=2k-1$ is odd, then $(1,2,\cdots,k)$ is a PL $n$-admissible $k$-tuple;
\item If $n=2k-2\ge4$ is even, then $(1,2,\cdots,k-2,k-2,k-1)$ is a PL $n$-admissible $k$-tuple.
\end{enumerate}
We shall later see that the PL admissible tuples in (1) are all minimal and hence tight, but it is not clear whether the tuples in (2) are minimal when $n\ge8$.

In dimensions at most $6$, every PL manifold has a unique compatible smooth structure up to isotopy (see \cite{hirsch1964smoothings}), so we obtain a tight (smoothly) $5$-admissible tuple $(1,2,3)$ and a tight (smoothly) $6$-admissible tuple $(1,2,2,3)$ that are not covered by the previous examples.

The notions of multisections defined by Rubinstein--Tillmann \cite{rubinstein2020multisections} and by Ben Aribi--Courte--Golla--Moussard \cite{aribi2023multisections} (Definition~\ref{def:multisection}) are both higher-dimensional generalizations of Heegaard splittings and trisections \cite{gay2016trisecting}, but they differ starting in dimension $5$. The latter notion has the advantage that the manifold can be recovered up to PL homeomorphisms (thus up to diffeomorphisms in dimension $n\le6$) by $n-1$ sets of attaching curves on the multisection surface that determine the $(n-1)$ $3$-dimensional sectors, at least when the manifold is orientable (see \cite[Section~3]{aribi2023multisections}). This could lead to a (a priori non-efficient) way to enumerate PL $n$-manifolds with increasing complexity.
\end{Ex}

\begin{Ex}\label{ex:lax}
If $\vec a=(a_1,\cdots,a_k)$ is an $n$-admissible $k$-tuple, and $a_k=n-k+1$, then for any $k<\ell\le n+1$, $\vec a^{(\ell)}:=(a_1,\cdots,a_{k-1},n-k,0,\cdots,0,n-\ell+1)$ is an $n$-admissible $\ell$-tuple. This is because $\vec b:=(a_1,\cdots,a_k,0,\cdots,0,n-\ell+1)$ is an $n$-admissible tuple and $\vec a^{(\ell)}\ge\vec b^-$ (see Observation~\ref{obs:adm_tuples}).

For instance, for any $3\le k\le n$, $(n-1,0,\cdots,0,n-k+1)$ is an $n$-admissible $k$-tuple. More explicitly, one can find an $(n-1,0,\cdots,0,n-k+1)$-multisection of a connected $n$-manifold $X$ by decomposing a small ball into $k-1$ sectors in the standard way and assigning the complement of the ball to a $k$-th sector. Since all interesting topology of $X$ is contained in the last piece, the decomposition is not particularly interesting. Although we shall see that $(n-1,0,\cdots,0,n-k+1)$ is minimal, heuristically, its nontightness reflects the boring feature of the decomposition.
\end{Ex}

\subsubsection{A cohomological obstruction}\label{sbsbsec:adm_tuples_obstruction}
We establish a cohomological obstruction to the $n$-admissibility of tuples by exploiting the cup product on the Mayer--Vietoris (\v Cech) spectral sequence associated to an $\vec a$-multisection.

For an element $\vec a\in T=\sqcup_{k\ge0}(\Z_{\ge0}\sqcup\{-\infty\})^k$, let $\vec a^{\ge0}\in T$ denote the element obtained from $\vec a$ by deleting all entries beyond (and including) the first $-\infty$ component in $\vec a$. For example, $(2,0,-\infty,3,-\infty)^{\ge0}=(2,0)$ and $(1,0,4)^{\ge0}=(1,0,4)$.

\begin{Thm}\label{thm:obstruction}
Suppose the $n$-torus $T^n$ admits a cohomological $\vec a$-multisection, where $\vec a^{\ge0}=(a_1,\cdots,a_k)$. Then, there exists a map $p\colon\{0,\cdots,n\}\to\{0,\cdots,k-1\}$ satisfying the following conditions:
\begin{enumerate}[(i)]
\item $p(i)\le i$ for all $i$.
\item $p(i)+p(j)\le p(i+j)$ for all $i,j$ with $i+j\le n$.
\item If $p(i)+p(j)=p(i+j)$, then $a_{r+1}\ge i-p(i)$ for all $p(i)\le r\le p(i)+p(j)$.
\end{enumerate}
\end{Thm}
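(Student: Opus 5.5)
The plan is to use the Mayer--Vietoris (\v Cech) spectral sequence of the cover $T^n=X_1\cup\cdots\cup X_k$, with field coefficients, and its multiplicative structure. It has
$$E_1^{r,q}=\bigoplus_{|I|=r+1}H^q(X_I)\Longrightarrow H^{r+q}(T^n),$$
and the cup product on the abutment is compatible with the product on $E_1$ induced from the cup products on the sectors.

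Since $\vec a^{\ge0}=(a_1,\cdots,a_k)$, all $(k+1)$-fold intersections are empty. Hence $E_1^{r,q}=0$ for $r\ge k$. The cohomological hypothesis on the sectors gives $E_1^{r,q}=0$ whenever $q>a_{r+1}$. Let $F^\bullet$ be the induced decreasing filtration on $H^*(T^n)$. Multiplicativity gives $F^r\cdot F^s\subset F^{r+s}$, with the induced product $E_\infty^{r,i-r}\otimes E_\infty^{s,j-s}\to E_\infty^{r+s,i+j-r-s}$.

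\textbf{Definition of $p$.} For each $i$, let $p(i)$ be the largest $r$ with $E_\infty^{r,i-r}\neq0$, i.e. the deepest filtration level met by $H^i(T^n)\neq0$. Then $p(i)\le k-1$ because $E_1^{r,\ast}=0$ for $r\ge k$, and $p(i)\le i$ because $q=i-r\ge0$. This gives (i). As a byproduct, $i-p(i)\le a_{p(i)+1}$, which is the case $r=p(i)$ of (iii).

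\textbf{Superadditivity (ii).} The aim is to find $x\in F^{p(i)}H^i$ and $y\in F^{p(j)}H^j$ with $xy\neq0$. Then $0\neq xy\in F^{p(i)+p(j)}H^{i+j}$, so $p(i+j)\ge p(i)+p(j)$. This is where the specific structure of $H^*(T^n)=\Lambda^*(e_1,\cdots,e_n)$ must be used. Two nonzero subspaces $V\subset\Lambda^i$ and $W\subset\Lambda^j$ can multiply to zero (e.g.\ $V=W=\langle e_1\rangle$), so this step is the main obstacle.

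What I would try first is to replace an arbitrary element by products of degree-one classes. Every element of $F^{p(i)}H^i$ is a sum of monomials in $H^1$, and multiplying by a degree-one class raises filtration by $0$ or $1$, since $F^1H^1$ and $H^1=F^0H^1$ are the only levels. This lets one pick $x$ to be a product $u_1\cdots u_i$ of degree-one classes realizing filtration $p(i)$. One then chooses $y$ as a product of degree-one classes complementary to the $u$'s, linearly independent from them, that realizes filtration $p(j)$. This is possible because the set of such $y$ realizing the maximal filtration is Zariski open in the appropriate product of Grassmannian-type data, and $i+j\le n$ leaves room. With both $x$ and $y$ decomposable in independent directions, $xy\neq0$. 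If this genericity argument turns out to fail, the fallback is to use the $GL_n(\Z)$-symmetry of $T^n$, which acts on $H^1$. Replacing the multisection by its image under a suitable diffeomorphism would put the relevant filtration subspaces in general position relative to a fixed basis.

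\textbf{Condition (iii).} Suppose $p(i)+p(j)=p(i+j)$, with $x=u_1\cdots u_i$ and $y=v_1\cdots v_j$ as above. Multiply $x$ successively by $v_1,v_2,\cdots,v_j$. The filtration of the partial products increases by $0$ or $1$ at each step. It starts at $p(i)$, and it must end at $p(i+j)=p(i)+p(j)$, because $\mathrm{filt}(xy)\ge\mathrm{filt}(x)+\mathrm{filt}(y)$ while $p(i+j)$ is the maximum. Hence every $r$ with $p(i)\le r\le p(i)+p(j)$ occurs as the exact filtration of some nonzero partial product $xv_1\cdots v_\ell$, of degree $i+\ell$. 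This gives $E_\infty^{r,i+\ell-r}\neq0$, and so $E_1^{r,i+\ell-r}\neq0$. The vanishing of $E_1$ then forces $a_{r+1}\ge i+\ell-r$.

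Since the filtration rises by at most one per step, $r\le p(i)+\ell$, so $i+\ell-r\ge i-p(i)$. Hence $a_{r+1}\ge i-p(i)$, which is (iii). The only delicate point here is again that the partial products stay nonzero, which the independence of the $u$'s and $v$'s guarantees.
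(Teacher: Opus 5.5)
There is a genuine gap, and it begins with your definition of $p$. Taking $p(i)$ to be the \emph{deepest} filtration level met by $H^i(T^n)$ does not give a superadditive function.

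For example, cut $T^2=S^1\times S^1$ (coordinates $\theta,t$) into the annuli $X_1=\{t\in[0,\tfrac12]\}$ and $X_2=\{t\in[\tfrac12,1]\}$. This is a cohomological $(1,1)$-multisection with $k=2$. Here $F^1H^1(T^2)=\ker\big(H^1(T^2)\to H^1(X_1)\oplus H^1(X_2)\big)$ is the line spanned by $[dt]$, so your $p(1)=1$, while $p(2)\le k-1=1$. Thus $p(1)+p(1)>p(2)$ and (ii) fails.

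This is exactly where your genericity step breaks. Lying in the deepest piece $F^{p(j)}H^j$ is a Zariski-\emph{closed} (linear) condition, not an open one. In this example $x$ and $y$ are both forced into the same line, so $xy=0$. The $GL_n(\Z)$ fallback cannot help, since a diffeomorphism transports the filtration along with everything else.

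The paper uses a \emph{minimum} instead. Fix a basis $u_1,\dots,u_n$ of $H^1$ and let $p'(I)$ be the exact filtration degree of $u_I$. For disjoint $I,J$ one has $p'(I)+p'(J)\le p'(I\sqcup J)$, because $u_I\cup u_J=\pm u_{I\sqcup J}\neq0$. Then set $p(i)=\min_{|I|=i}p'(I)$; superadditivity of $p$ follows by splitting a minimizing set $K=I\sqcup J$. Your Zariski-open idea would in fact work for the minimal (generic) filtration level, since avoiding $F^{r+1}$ is an open condition.

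Your argument for (iii) has an independent flaw. Multiplying by a degree-one class can raise the exact filtration by more than one; multiplicativity only gives a lower bound. To see this, cut $T^2$ into three pieces: an annular neighbourhood $X_1$ of a meridian $a$, the rectangle $X_2$ left over from a band around a longitude $b$, and the complementary disk $X_3$. The class $\alpha$ with $\langle\alpha,a\rangle=1$, $\langle\alpha,b\rangle=0$ restricts nontrivially to $X_1$, so it has filtration $0$. Let $\beta$ satisfy $\langle\beta,a\rangle=0$, $\langle\beta,b\rangle=1$. Then $\alpha\cup\beta$ generates $H^2(T^2)=F^2H^2$, because $E_1^{0,2}=E_1^{1,1}=0$. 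So intermediate values of $r$ can be skipped, and the inequality $r\le p(i)+\ell$ can fail.

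The paper proves (iii) at the $E_1$-level. In the equality case, take permanent cycles $x_I,x_J\in E_1$ representing $u_I,u_J$; their product is nonzero in $E_1^{p'(I)+p'(J),\ast}$. By the \v Cech formula only pairs $K,L$ with $\max K=\min L$ contribute, so some restriction $H^{|I|-p'(I)}(X_K)\to H^{|I|-p'(I)}(X_{K\cup L})$ is nonzero. This restriction factors through $H^{|I|-p'(I)}(X_M)$ for every $K\subset M\subset K\cup L$. That gives $a_{r+1}\ge|I|-p'(I)$ for every $r$ in the range at once.
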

Here, a cohomological $\vec a$-multisection is in the sense of Remark~\ref{rmk:var_adm_tuples}, namely a decomposition in which every $i$-fold intersection has trivial cohomology beyond dimension $a_i$. The function $p$ in Theorem~\ref{thm:obstruction} will be called a \textit{support function} of the $k$-tuple $\vec a$.

In the case of nondecreasing tuples, the obstruction simplifies significantly.
\begin{Cor}\label{cor:obstruction}
Suppose the $n$-torus $T^n$ admits a cohomological $\vec a$-multisection for some nondecreasing tuple $\vec a=(a_1,\cdots,a_k)$. Then, there exists a map $p\colon\{0,\cdots,n\}\to\{0,\cdots,k-1\}$ satisfying the following conditions:
\begin{enumerate}[(i)]
\item $0\le i-p(i)\le a_{p(i)+1}$ for all $i$.
\item $p(i)+p(j)\le p(i+j)$ for all $i,j$ with $i+j\le n$.
\end{enumerate}
\end{Cor}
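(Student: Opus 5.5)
The plan is to deduce the corollary directly from Theorem~\ref{thm:obstruction}, by specializing its condition (iii) to the case $j=0$.

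First, I reduce to $\vec a^{\ge0}=\vec a$. Suppose some entry $a_i$ equals $-\infty$. Since $\vec a$ is nondecreasing and $-\infty$ is the smallest value, this forces $a_1=-\infty$. That would mean every sector $X_{\{i\}}$ is empty, which is impossible because the sectors cover the nonempty manifold $T^n$. So all entries of $\vec a$ are nonnegative integers, $\vec a^{\ge0}=\vec a=(a_1,\cdots,a_k)$, and Theorem~\ref{thm:obstruction} applies with this same $k$. It yields a map $p\colon\{0,\cdots,n\}\to\{0,\cdots,k-1\}$ satisfying conditions (i)--(iii) of the theorem. Condition (ii) of the corollary is exactly condition (ii) of the theorem, so only condition (i) of the corollary remains.

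For the lower bound $i-p(i)\ge0$, I use condition (i) of the theorem, which says $p(i)\le i$. For the upper bound, I first observe that $p(0)\le0$ by the same condition, so $p(0)=0$. Then for any $i$ and $j=0$ we have $p(i)+p(0)=p(i)=p(i+0)$, so condition (iii) of the theorem applies. It gives $a_{r+1}\ge i-p(i)$ for every $r$ with $p(i)\le r\le p(i)+p(0)=p(i)$. Taking $r=p(i)$, this is $a_{p(i)+1}\ge i-p(i)$. The index is valid because $p(i)\le k-1$.

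Nothing here looks like a genuine obstacle. The only delicate point is the bookkeeping in the first step: checking that $\vec a^{\ge0}$ coincides with $\vec a$, so that $k$ and the entries $a_{p(i)+1}$ in the corollary are the same as in the theorem. Monotonicity is used only there. In the intended reading, it also explains why the remaining conditions in (iii) for $j>0$ add nothing: they bound $a_{r+1}$ for $r\ge p(i)$, and for a nondecreasing tuple these are implied by the bound on $a_{p(i)+1}$.
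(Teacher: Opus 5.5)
Your proposal is correct and is essentially the paper's proof: condition (i) of Theorem~\ref{thm:obstruction} gives $p(0)=0$, and then condition (iii) with $j=0$ and $r=p(i)$ yields $i-p(i)\le a_{p(i)+1}$. Your preliminary check, that monotonicity forces $\vec a^{\ge0}=\vec a$ because $a_1=-\infty$ would make every sector empty, is bookkeeping the paper leaves implicit; the paper only remarks afterward that the corollary holds whenever $\vec a=\vec a^{\ge0}$.
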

\begin{proof}
By (i) of Theorem~\ref{thm:obstruction}, $p(0)=0$. Thus $i-p(i)\le a_{p(i)+1}$ by condition (iii) of Theorem~\ref{thm:obstruction} for $j=0$.
\end{proof}
Corollary~\ref{cor:obstruction} holds for all $\vec a$ with $\vec a=\vec a^{\ge0}$ regardless of whether $\vec a$ is nondecreasing. The point here is that when $\vec a$ is nondecreasing, the reduction from Theorem~\ref{thm:obstruction} to Corollary~\ref{cor:obstruction} is faithful.

\begin{proof}[Proof of Theorem~\ref{thm:obstruction}]
By Observation~\ref{obs:adm_tuples}, every cohomological $\vec a$-multisection can be replaced by a cohomological $\vec a^-$-multisection. Thus we may assume $\vec a=\vec a^-$, with some length $k'\ge k$. Let $T^n=X_1\cup\cdots\cup X_{k'}$ be a cohomological $\vec a$-multisection. The cohomological Mayer--Vietoris spectral sequence associated to the decomposition has $E_1$-page given by $E_1^{p,q}=\oplus_{|K|=p+1}H^q(X_K)$, supported in $0\le p<k$, $0\le q\le a_{p+1}$. The spectral sequence converges to $H^*(T^n)$, equipping it with a filtration $H^*(T^n)=F^0H^*(T^n)\supset F^1H^*(T^n)\supset\cdots\supset F^{k-1}H^*(T^n)\supset F^kH^*(T^n)=0$.

Fix a basis $u_1,\cdots,u_n\in H^1(T^n)\cong\Z^n$. As shorthand, write $u_I=\cup_{i\in I}u_i\in H^{|I|}(T^n)$ (arranged in increasing indices, say) for any $I\subset\{1,\cdots,n\}$. Then every $u_I$ is nonzero, thus has some filtration degree $p'(I)=\max\{r\colon u_I\in F^rH^{|I|}(T^n)\}\in\{0,\cdots,k-1\}$. Thus, the image of $u_I$ in $E_\infty^{p'(I),|I|-p'(I)}=F^{p'(I)}H^{|I|}(T^n)/F^{p'(I)+1}H^{|I|}(T^n)$ is nonzero, which implies that $E_1^{p'(I),|I|-p'(I)}\ne0$, whence
\begin{enumerate}[(i')]
\item $p'(I)\le|I|$.
\end{enumerate}
Since the cup product on $H^*(T^n)$ respects the filtration in the sense that $F^iH^*(T^n)\otimes F^jH^*(T^n)$ maps to $F^{i+j}H^*(T^n)$, we deduce that 
\begin{enumerate}[(i')]
\setcounter{enumi}{1}
\item $p'(I)+p'(J)\le p'(I\sqcup J)$ for disjoint $I,J\subset\{1,\cdots,n\}$.
\end{enumerate}
If the equality $p'(I)+p'(J)=p'(I\sqcup J)$ in (ii') is achieved, pick classes $x_I\in E_1^{p'(I),|I|-p'(I)}$, $x_J\in E_1^{p'(J),|J|-p'(J)}$ that survive in the $E_\infty$-page which represent $u_I,u_J$, respectively. Then $x_I\cup x_J\in E_1^{p'(I\sqcup J),|I\sqcup J|-p'(I\sqcup J)}$ is nonzero, represents $u_{I\sqcup J}=\pm u_I\cup u_J$, and is computed as follows. Write $x_I=\sum_{|K|=p'(I)+1}x_{I,K}$ where $x_{I,K}\in H^{|I|-p'(I)}(X_K)$, and similarly $x_J=\sum x_{J,L}$. If $K$ consists of indices $k_0<\cdots<k_{p'(I)}$ and $L$ consists of indices $\ell_0<\cdots<\ell_{p'(J)}$, then up to sign, each $x_{I,K}\cup x_{J,L}$ is the cup product of the restrictions of $x_{I,K}$ and $x_{J,L}$ to $X_{K\cup L}$ if $k_{p'(I)}=\ell_0$, and zero otherwise. The cup product $x_I\cup x_J$ is then computed by bilinearity. Since it is nonzero, some $x_{I,K}\cup x_{J,L}$ with $|K\cap L|=1$ is nonzero. In particular, the restriction map $H^{|I|-p'(I)}(X_K)\to H^{|I|-p'(I)}(X_{K\cup L})$ is nonzero. Since this map factors through $H^{|I|-p'(I)}(X_M)$ for all $K\subset M\subset K\cup L$, we see that any such group is nonzero, showing that
\begin{enumerate}[(i')]
\setcounter{enumi}{2}
\item If $p'(I)+p'(J)=p'(I\sqcup J)$, then $a_{r+1}\ge|I|-p'(I)$ for all $p'(I)\le r\le p'(I)+p'(J)$.
\end{enumerate}
It is easy to check that if $p'\colon 2^{\{1,\cdots,n\}}\to\{0,\cdots,k-1\}$ satisfies (i')(ii')(iii'), then $p\colon\{0,\cdots,n\}\to\{0,\cdots,k-1\}$ defined by $p(i)=\min\{p'(I)\colon|I|=i\}$ satisfies (i)(ii)(iii). 
(Conversely, if $p$ satisfies (i)(ii)(iii), then $p'(I)=p(|I|)$ satisfies (i')(ii')(iii').)
\end{proof}

\begin{Prop}\label{prop:minimal}
The admissible tuples in Examples~\ref{ex:n} through ~\ref{ex:lax} (excluding the general family $\vec a^{(\ell)}$ in Example~\ref{ex:lax}) are all minimal, with the possible exception of Example~\ref{ex:rubinstein_tillmann} for even $n\ge8$.
\end{Prop}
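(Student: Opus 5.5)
The plan is to show that every tuple lying immediately below one of the listed tuples in the partial order $\le$ fails to be $n$-admissible. Since admissibility is upward closed, it is enough to rule out the tuples obtained from $\vec a$ by one of two moves. Either we delete the last entry, or we lower a single entry $a_j$ by one, where lowering $0$ means replacing it by $-\infty$. Lowering to $-\infty$ amounts to truncating at position $j$ via $\vec a\mapsto\vec a^{\ge0}$, as in Observation~\ref{obs:adm_tuples}(i). For each such candidate $\vec b$, the goal is to show that $T^n$ admits no cohomological $\vec b$-multisection. By Theorem~\ref{thm:obstruction}, this reduces to showing that $\vec b^{\ge0}$ admits no support function $p$. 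When $\vec b^{\ge0}$ is nondecreasing, I would use the simpler Corollary~\ref{cor:obstruction}. So the whole proof becomes a finite combinatorial check, organized family by family.

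The basic tool is superadditivity. From (ii) we get $p(i)\ge i\,p(1)$. Also $p(n)\le k-1$, and (i) of the Corollary gives $p(i)\ge i-a_{p(i)+1}$. For example, take $(1,\dots,1,2)$ of length $n-1$ and lower the last entry to get $(1,\dots,1)$. Then $p(n)\ge n-1>k-1=n-2$, a contradiction. Dropping the last entry is handled the same way. For $(1,\dots,1)$ of length $n$, lowering the last entry to $0$ forces $p(n)=n$, which is impossible; truncation also gives too short a range for $p$. For $(0,\dots,0)$ of length $n+1$, every proper truncation has length $k\le n$, while $p(i)=i$ is forced, so $p(n)=n>k-1$. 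For $(n)$, the tuple $(n-1)$ fails because $p\equiv0$ would need $a_1\ge n$. For $(\lfloor n/2\rfloor,n-1)$, the choices $p(i)\in\{0,1\}$ together with superadditivity force a threshold: $p(i)=0$ for $i\le i_0$ and $p(i)=1$ beyond it. Then $p(i)\le1$ forbids $p(i)=p(j)=1$ whenever $i+j\le n$. Lowering either entry breaks one of the two bounds $i\le a_1$ on $\{p=0\}$ and $i-1\le a_2$ on $\{p=1\}$. The Rubinstein--Tillmann tuples $(1,2,\dots,k)$ for $n=2k-1$, and $(1,2,2,3)$ for $n=6$, are treated the same way: superadditivity plus $i-p(i)\le a_{p(i)+1}$ essentially pin down $p(i)=\lceil\cdot\rceil$-type values, and lowering any entry leaves no room. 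Finally, for $(n-1,0,\dots,0,n-k+1)$, the zeros force $p$ to jump quickly, and the last entry $n-k+1$ is exactly what is needed at $p=k-1$.

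The main obstacle is the candidates $\vec b$ obtained by lowering an interior entry. These are typically not nondecreasing, for instance $(1,\dots,0,\dots,1,2)$, so Corollary~\ref{cor:obstruction} no longer applies faithfully and condition (iii) of Theorem~\ref{thm:obstruction} must be used directly. Here I would first split on $p(1)$. If $p(1)\ge1$, superadditivity immediately gives $p(n)\ge n$, which is too large. If $p(1)=0$, I would apply (iii) with $i=1$ to successive equalities $p(j+1)=p(j)$ or $p(j)+0$. Since $a_{r+1}\ge1$ is required along the whole range $0\le r\le p(j)$, this forces every $a_r$ with $r\le p(n)$ to be positive. That contradicts the lowered zero whenever $p$ must pass through that level, and it must, because the remaining entries alone cannot accommodate $i-p(i)$ up to $i=n$. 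I would carry out this case analysis uniformly for all families. I would note explicitly that for the even Rubinstein--Tillmann family with $n\ge8$ the analysis does produce a support function, which is why that case is excluded.
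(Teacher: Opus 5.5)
Your overall route matches the paper's. The reduction to the finitely many immediate predecessors is valid, since any strictly smaller tuple lies below one of them. Your arguments for $(n)$, $(\lfloor n/2\rfloor,n-1)$ and $(0,\dots,0)$ are correct, as are those for deleting or lowering the top entry and for the Rubinstein--Tillmann tuples. (For the latter the paper is more economical: it computes the unique support function of $\vec a$ itself and reads off the minimal tuple it supports. This suffices because any support function for a smaller tuple is also one for $\vec a$.)

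\textbf{The gap.} It lies in your final paragraph, which is exactly where the cases that Corollary~\ref{cor:obstruction} cannot handle live. Condition (iii) with $i=1$ at a flat step $p(j)=p(j+1)$ only gives $a_1,\dots,a_{p(j)+1}\ge1$. It does not force every $a_r$ with $r\le p(n)$ to be positive. As stated, your claim would show that the admissible tuple $(n-1,0,\dots,0,n-k+1)$ has no support function. Yet it has one: $p=0$ on $\{0,\dots,n-1\}$ and $p(n)=k-1$, whose only flat steps are at level $0$. So the real work is to exhibit, for each bad candidate, a flat step at level at least $r_0-1$, where $r_0$ is the position of the offending zero. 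The phrase ``the remaining entries cannot accommodate $i-p(i)$'' does not do this. Nor is this a formality. For $n=5$, $(1,1,1,0,1)$ passes the Corollary test with $(p(0),\dots,p(5))=(0,0,1,2,4,4)$. Likewise $(n-2,0,\dots,0,n-k+1)$ passes it with $p=0$ on $\{0,\dots,n-2\}$ and $p(n-1)=p(n)=k-1$.

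\textbf{How to close it.} The missing steps are those of the paper.
\begin{itemize}
\item In the $(1,\dots,1)$ family with a zero at position $i_0$, you have $p(i)\in\{i-1,i\}$ and $p(n)=n-1$, and the zero forces $p(i_0)=i_0$. Take $j\ge i_0$ maximal with $p(j)=j$. Then $j<n$ and $p(j+1)=j$, so (iii) with $(1,j)$ gives $a_{i_0}\ge1$, a contradiction.
\item In the family $(n-1,0,\dots,0,n-k+1)$, consider the candidate obtained by lowering $a_1$ to $n-2$. It is not an interior lowering, it has no lowered zero, and your paragraph on this family does not address it. Here you must show that $a_1<n-1$, together with the zeros, forces $p(n-1)=k-1=p(n)$. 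Then (iii) with $(1,n-1)$ yields $a_2\ge1$, contradicting a pre-existing zero.
\end{itemize}
By contrast, your sample case $(1,\dots,0,\dots,1,2)$ is handled by the Corollary alone. There $p(i)+p(n-i)\le p(n)=n-2$ forces $p(i)=i-1$ for $0<i<n$, which gives the contradiction directly.
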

\begin{proof}
Examples~\ref{ex:n} and~\ref{ex:bisect} are easy. For the remaining examples, we first observe that if $p\colon\{0,\cdots,n\}\to\{0,\cdots,k-1\}$ is a support function for a $k$-tuple $\vec a=\vec a^{\ge0}$, the conditions of Theorem~\ref{thm:obstruction} imply
\begin{equation}\label{eq:a_p_bound}
i-a_{p(i)+1}\le p(i)\le\min(i,k-1),\ \forall i.
\end{equation}

Example~\ref{ex:0000}: If $\vec a=\vec a^{\ge0}=(a_1,\cdots,a_k)$ is an $n$-admissible tuple not larger than the $(n+1)$-tuple $(0,\cdots,0)$, taking $i=n$ in \eqref{eq:a_p_bound}, we get $n=n-a_{p(n)+1}\le k-1$. This implies $k\ge n+1$, and hence $\vec a$ is equal to the $(n+1)$-tuple $(0,\cdots,0)$.

Example~\ref{ex:multisection}: If $p$ is a support function for the $(n-1)$-tuple $(1,\cdots,1,2)$, then \eqref{eq:a_p_bound} implies $p(n)=n-2$ and $p(i)=i-1$ or $i$ for all $i<n$. By $p(i)+p(n-i)\le p(n)$, we actually have $p(i)=i-1$ for all $0<i<n$. This completely determines the function $p$, and the minimal tuple supported by $p$ is $(1,\cdots,1,2)$, so indeed any tuple strictly smaller than it is not supported by $p$ and hence not by any function.

Example~\ref{ex:111}: We show that any $(a_1,\cdots,a_k)$ strictly smaller than the $n$-tuple $(1,\cdots,1)$ is not $n$-admissible. By \eqref{eq:a_p_bound}, any support function $p$ of $\vec a$ has $p(i)=i$ or $i-1$ for all $i$, and $p(n)=n-1$. Thus, $k=n$, $a_n=1$. By the superadditivity of $p$, $p(1)=0$, and thus $a_1=1$. By assumption, there exists an index $1<i<n$ with $a_i\le0$. Then by \eqref{eq:a_p_bound}, we must have $p(i)=i$. Let $j$ be the maximal index with $p(j)=j$; thus $i\le j<n$. Since we have $p(1)+p(j)=j=p(j+1)$, by condition (iii) we obtain $a_{r+1}\ge 1-p(1)=1$ for $0\le r\le j$. In particular, $0\ge a_i\ge1$, a contradiction.

Example~\ref{ex:rubinstein_tillmann}(1): If $p$ is a support function for the PL admissible $k$-tuple $\vec a=(1,2,\cdots,k)$, we have $p(i)\ge i-a_{p(i)+1}=i-p(i)-1$, so $p(i)\ge\lfloor i/2\rfloor$. This gives $p(n)=k-1=\lfloor n/2\rfloor$, and by superadditivity of $p$ we obtain $p(i)=\lfloor i/2\rfloor$ for all $i$. One can check that the minimal tuple supported by $p$ is indeed $\vec a=(1,\cdots,k)$, showing the minimality of $\vec a$.

Example~\ref{ex:rubinstein_tillmann}(2) for $n\le6$: For $n=4$, the minimality of $(1,1,2)$ is a special case of Example~\ref{ex:multisection}. For $n=6$, one can check that the unique support function for the $6$-admissible tuple $(1,2,2,3)$ is given by $p(0)=p(1)=0$, $p(2)=p(3)=1$, $p(4)=2$, $p(5)=p(6)=3$, and the minimal tuple supported by the function $p$ is indeed $(1,2,2,3)$, proving its minimality.

Example~\ref{ex:lax}: We show that any $(a_1,\cdots,a_\ell)$ strictly smaller than the $k$-tuple $(n-1,0,\cdots,0,n-k+1)$ is not $n$-admissible. Any support function $p$ for $\vec a$ has $p(n)=k-1$, hence $\ell=k$ and $a_\ell=n-k+1=a_k$. By Observation~\ref{obs:adm_tuples}, we must have $a_i=0$ for $1<i<k$, so $a_1<n-1$ by assumption. This implies by \eqref{eq:a_p_bound} that $p(n-1)=k-1$. Consequently, $p(1)+p(n-1)=k-1=p(n)$, so condition (iii) implies that $a_2\ge1-p(1)=1$, a contradiction.
\end{proof}

\begin{Ex}
For Rubinstein--Tillmann's examples (Example~\ref{ex:rubinstein_tillmann}) with even $n=2k-2\ge8$, however, Theorem~\ref{thm:obstruction} does not imply that the PL $n$-admissible $k$-tuple $(1,2,\cdots,k-2,k-2,k-1)$ is minimal. In fact, writing $m=\lfloor k/2\rfloor$, the strictly smaller $k$-tuple $(1,2,\cdots,m,m,m+1,\cdots,k-1)$ is minimally supported by the function $p$ defined by $$p(i)=\begin{cases}\lfloor i/2\rfloor,&0\le i\le n/2\\\lceil i/2\rceil,&n/2<i\le n.\end{cases}$$ See Figure~\ref{fig:support_function} for a visualization for the case $n=8$.
\end{Ex}

\begin{figure}[ht]
\centering
\begin{tikzpicture}[scale=0.8]
% Grid: 6 by 6
\draw[step=1, gray!40, very thin] (0,0) grid (6,6);
% Axes
\draw[->, thick] (0,0) -- (6.4,0) node[right] {$p$};
\draw[->, thick] (0,0) -- (0,6.4) node[above] {$q$};
% E_1 label
\node[above right] at (6,6) {$E_1$};
% Labels along axes, placed between vertices
\foreach \p in {0,...,5} {
  \node[below] at ({\p+0.5},0) {$\p$};
}
\foreach \q in {0,...,5} {
  \node[left] at (0,{\q+0.5}) {$\q$};
}
% Stars not circled
\foreach \p/\q in {
  1/0,
  2/0,2/1,
  3/0,3/1,
  4/0,4/1,4/2
} {
  \node at ({\p+0.5},{\q+0.5}) {$*$};
}
% Circled stars: two highest in each column
\foreach \p/\q in {
  0/0,0/1,
  1/1,1/2,
  2/2,
  3/2,3/3,
  4/3,4/4
} {
  \node[draw,circle,inner sep=1.5pt] at ({\p+0.5},{\q+0.5}) {$*$};
}
\end{tikzpicture}
\caption{The $5$-tuple $(1,2,2,3,4)$ is minimally supported by the function $\{0,\cdots,8\}\to\{0,\cdots,4\}$ determined by the rule $p+q\mapsto p$ applied to the circled asterisks.}
\label{fig:support_function}
\end{figure}

\begin{Que}
For $n=2k-2\ge8$, $m=\lfloor k/2\rfloor$, is the $k$-tuple $(1,2,\cdots,m,m,m+1,\cdots,k-1)$ (PL or smoothly) $n$-admissible?
\end{Que}

\subsubsection{Examples in low dimensions}\label{sbsbsec:ex_lowdim}
We state the consequences of Sections~\ref{sbsbsec:adm_tuples_construction} and~\ref{sbsbsec:adm_tuples_obstruction} in dimensions $n\le6$ toward answering Question~\ref{que:adm_tuples} and Question~\ref{que:tight_adm_tuples} in the smooth (equivalently, in these dimensions, PL) category.

We say a tuple $\vec a$ is \textit{algebraically $n$-allowable} if it admits a support function $p$ as in Theorem~\ref{thm:obstruction}. Thus, (smooth, PL, (co)homological) $n$-admissibility implies algebraic $n$-allowability. 

For each $n$, it is easy to determine the set of minimal algebraically $n$-allowable tuples. For instance, one can enumerate all maps $p\colon\{0,\cdots,n\}\to\Z_{\ge0}$ satisfying (i) and (ii) in Theorem~\ref{thm:obstruction}. For each such $p$, we use (iii) to determine uniquely a tuple $\vec a_p$ minimally supported by $p$. Then, the set of minimal algebraically $n$-allowable tuples is the set of minimal elements in $$\{\vec a_p\colon p\colon\{0,\cdots,n\}\to\Z_{\ge0}\text{ satisfying (i)(ii)}\}.$$ We state below the computer-generated results for $n\le6$. Let $\mathcal A_{n,alg}^{min}$, $\mathcal A_{n,sm}^{min}$ denote the set of minimal algebraically $n$-allowable tuples and (smoothly) $n$-admissible tuples, respectively.

\begin{Ex}[$n\le3$]
$$\mathcal A_{0,alg}^{min}=\{(0)\},$$ $$\mathcal A_{1,alg}^{min}=\{(1),(0,0)\},$$ $$\mathcal A_{2,alg}^{min}=\{(2),(1,1),(0,0,0)\},$$ $$\mathcal A_{3,alg}^{min}=\{(3),(1,2),(1,1,1),(2,0,1),(0,0,0,0)\}.$$
By Examples~\ref{ex:n},~\ref{ex:bisect},~\ref{ex:0000},~\ref{ex:111},~\ref{ex:lax}, we see that every element in $\mathcal A_{n,alg}^{min}$, $n=0,1,2,3$, is indeed an $n$-admissible tuple. Therefore, $\mathcal A_{n,sm}^{min}=\mathcal A_{n,alg}^{min}$ for $n\le3$, answering Question~\ref{que:adm_tuples} in these dimensions.
\end{Ex}

\begin{Ex}[$n=4$]
$$\mathcal A_{4,alg}^{min}=\{(4),(2,3),(1,1,2),(3,0,2),(1,1,1,1),(1,2,0,1),(2,0,1,1),(3,0,0,1),(0,0,0,0,0)\}.$$ By Examples~\ref{ex:n},~\ref{ex:bisect},~\ref{ex:0000},~\ref{ex:multisection},~\ref{ex:111},~\ref{ex:lax}, we see that every element in $\mathcal A_{4,alg}^{min}$ except possibly $(1,2,0,1)$ and $(2,0,1,1)$ is $4$-admissible.

We could not decide if these two extra tuples are $4$-admissible, although we observe that $(2,2,0,1)$ is $4$-admissible by applying Example~\ref{ex:lax} to the $4$-admissible tuple $(2,3)$. Consequently, in view of Observation~\ref{obs:adm_tuples}, to determine the set of $4$-admissible tuples, it is sufficient to determine the $4$-admissibilities of $(1,2,0,1)$ and $(2,0,1,1)$.

In particular, we may conclude that all tight $4$-admissible tuples are $$\mathcal A_{4,sm}^{tight}=\{(4),(2,3),(1,1,2),(1,1,1,1),(0,0,0,0,0)\}.$$
\end{Ex}

\begin{Que}
Are $(1,2,0,1)$ and $(2,0,1,1)$ $4$-admissible?
\end{Que}

\begin{Ex}[$n=5$]
\begin{align*}
\mathcal A_{5,alg}^{min}=&\,\{(5),(2,4),(1,2,3),(3,1,3),(4,0,3),(1,1,1,2),(2,3,0,2),(3,0,2,2),(4,0,0,2),\\&\,(1,1,1,1,1),(1,1,2,0,1),(1,2,0,1,1),(2,0,1,1,1),(2,3,0,0,1),(3,0,0,1,1),\\&\,(3,0,2,0,1),(4,0,0,0,1),(0,0,0,0,0,0)
\}.
\end{align*}
We conclude that the tight $5$-admissible tuples are $$\mathcal A_{5,sm}^{tight}=\{(5),(2,4),(1,2,3),(1,1,1,2),(1,1,1,1,1),(0,0,0,0,0,0)\},$$ realized by Examples~\ref{ex:n} through ~\ref{ex:rubinstein_tillmann} each used exactly once.
\end{Ex}
\begin{Ex}[$n=6$]
\begin{align*}
\mathcal A_{6,alg}^{min}=&\,\{(6),(3,5),(2,2,4),(4,1,4),(5,0,4),(1,2,2,3),(2,3,1,3),(2,4,0,3),(3,1,2,3),\\&\,(4,0,3,3),(4,1,1,3),(5,0,0,3),(1,1,1,1,2),(1,2,3,0,2),(2,3,0,2,2),(2,4,0,0,2),\\&\,(3,0,2,2,2),(3,1,3,0,2),(4,0,0,2,2),(4,0,3,0,2),(5,0,0,0,2),(1,1,1,1,1,1),\\&\,(1,1,1,2,0,1),(1,1,2,0,1,1),(1,2,0,1,1,1),(1,2,3,0,0,1),(2,0,1,1,1,1),\\&\,(2,3,0,0,1,1),(2,3,0,2,0,1),(2,4,0,0,0,1),(3,0,0,1,1,1),(3,0,2,0,1,1),\\&\,(3,0,2,2,0,1),(3,1,3,0,0,1),(4,0,0,0,1,1),(4,0,0,2,0,1),(4,0,3,0,0,1),\\&\,(5,0,0,0,0,1),(0,0,0,0,0,0,0)\}.
\end{align*}
From Examples~\ref{ex:n} through ~\ref{ex:rubinstein_tillmann}, we conclude that the tight $6$-admissible tuples include $(6),(3,5),\allowbreak(1,2,2,3),(1,1,1,1,2),(1,1,1,1,1,1),(0,0,0,0,0,0,0)$, and possibly some additional $3$-tuples.
\end{Ex}
\begin{Que}
Is $(2,2,4)$ $6$-admissible?
\end{Que}
From the examples above, it is tempting to believe that there exists a unique tight $n$-admissible $k$-tuple for each $1\le k\le n+1$. We remark, however, that this pattern may break down. For example, $(1,2,3,4)$ and $(2,2,2,4)$ are both minimal algebraically $7$-allowable $4$-tuples.

\printbibliography

\end{document}